\documentclass[12pt]{article}

\usepackage[T1]{fontenc}
\usepackage{amsmath,amsthm,mathtools}

\usepackage[dvipsnames]{xcolor}
\usepackage{newtxtext} % Times 风格正文
\usepackage{newtxmath}

\usepackage{iftex}
\ifPDFTeX\usepackage[utf8]{inputenc}\fi
\usepackage[a4paper,margin=1in]{geometry}
\usepackage{aliascnt}
\usepackage{enumitem}
\usepackage[protrusion=true,expansion=false]{microtype}
\usepackage[hyperfootnotes=false,colorlinks=true,linkcolor=blue,citecolor=blue,
  urlcolor=blue]{hyperref}
\usepackage{comment}

\makeatletter
\newcommand\blfootnote[1]{%
  \begingroup
  \renewcommand\@makefntext[1]{\noindent\strut ##1}%
  \renewcommand\thefootnote{}\footnotetext{#1}%

  \endgroup
}
\makeatother

\setlist{itemsep=0.25em,topsep=0.4em}
\newtheorem{theorem}{Theorem}[section]
\newaliascnt{proposition}{theorem}

\aliascntresetthe{proposition}
\newaliascnt{lemma}{theorem}
\newtheorem{lemma}[lemma]{Lemma}
\aliascntresetthe{lemma}
\newaliascnt{question}{theorem}
\newtheorem{question}[question]{Question}
\aliascntresetthe{question}
\newaliascnt{corollary}{theorem}

\aliascntresetthe{corollary}
\theoremstyle{definition}
\newaliascnt{definition}{theorem}
\newtheorem{definition}[definition]{Definition}
\aliascntresetthe{definition}
\newaliascnt{remark}{theorem}
\newtheorem{remark}[remark]{Remark}
\aliascntresetthe{remark}
\newaliascnt{example}{theorem}
\newtheorem{example}[example]{Example}
\aliascntresetthe{example}

\newtheorem*{introremark}{Remark}
\usepackage[capitalise,noabbrev]{cleveref}

\crefname{citedtheorem}{Theorem}{Theorems}
\Crefname{citedtheorem}{Theorem}{Theorems}
\crefname{question}{Question}{Questions}
\Crefname{question}{Question}{Questions}
\newcommand{\R}{\mathbb R}
\newcommand{\C}{\mathbb C}
\newcommand{\HH}{\mathbb H}
\newcommand{\F}{\mathbb F}
\newcommand{\eps}{\varepsilon}
\newcommand{\ip}[2]{\langle #1,#2\rangle}
\newcommand{\h}{\mathfrak h}
\newcommand{\kk}{\mathfrak k}
\newcommand{\gfr}{\mathfrak g}
\newcommand{\son}{\mathfrak{so}}
\newcommand{\sun}{\mathfrak{su}}
\newcommand{\spn}{\mathfrak{sp}}
\newcommand{\dd}{\,\mathrm d}
\DeclareMathOperator{\diam}{diam}
\DeclareMathOperator{\Gr}{Gr}
\DeclareMathOperator{\Isom}{Isom}
\DeclareMathOperator{\tr}{tr}
\DeclareMathOperator{\Rea}{Re}
\DeclareMathOperator{\pr}{pr}
\DeclareMathOperator{\Hom}{Hom}
\DeclareMathOperator{\Lie}{Lie}
\DeclareMathOperator{\Ad}{Ad}
\DeclareMathOperator{\Sym}{Sym}
\numberwithin{equation}{section}

\title{\textbf{A diameter gap for irreducible compact symmetric spaces of fixed rank}}
\author{Zhiqi Chen, Shaoqiang Deng and Hui Zhang}
\date{}
\hypersetup{pdftitle={A diameter gap for irreducible compact symmetric spaces},
  pdfauthor={Zhiqi Chen, Shaoqiang Deng and Hui Zhang}}

\begin{document}
\raggedbottom
\maketitle

\begin{abstract}
For every positive integer $r$, we prove that positive-diameter isometric
quotients of simply connected irreducible compact Riemannian symmetric
spaces of rank $r$ have a uniform positive lower bound on their normalized
diameter. The bound is independent of dimension and of the acting group,
which may be disconnected. The rank-one case is due to Gorodski, Lange,
Lytchak, and Mendes \cite{GLLM}. For the higher-rank Grassmannian families,
we construct nonconstant invariant functions of bounded trigonometric
degree. The real and complex cases use a moment map argument.
The quaternionic case uses a quartic rigidity argument and the
classification of compact quaternionic symmetric spaces.

\end{abstract}
\tableofcontents

\blfootnote{%
\noindent\textit{2020 Mathematics Subject Classification.}
Primary 53C35; Secondary 53C20, 57S15.\\
\noindent\textit{Keywords.} Diameter gap, symmetric space, Grassmannian,
isometric group action, moment map, quaternionic symmetric pair.}

\newpage

\section{Introduction}\label{sec:introduction}

Diameter estimates for isometric quotients have been studied in several
settings, including spherical space forms, spherical orbifolds, and
orbit spaces of compact group actions; see
\cite{McGowan,Greenwald,McGowanSearle,DGMS,Green,GLLM}.
For each fixed $n\ge2$, Greenwald proved the existence of a positive
lower bound, depending on $n$, for the diameter of every nontrivial
isometric quotient of the unit sphere $S^n$; see  
\cite{Greenwald}.  Green subsequently obtained an asymptotically sharp bound for finite
orthogonal groups: a lower bound for $\diam(S^n/G)$ tends to $\pi/2$
as $n\to\infty$; see \cite{Green}.  
In \cite{GLLM},  Gorodski, Lange, Lytchak, and Mendes proved
a uniform  positive gap for all isometric sphere actions,
including disconnected acting groups . Their theorem is the
starting point of this paper.

\begin{theorem}[\cite{GLLM}]\label{thm:sphere-gap}
There exists some $\eps>0$ such that for any $n\ge2$ and any group $G$ acting
by isometries on the unit $n$-dimensional sphere $\mathbb{S}^n$ the quotient space $\mathbb{S}^n/G$ has diameter either $0$ or at least $\eps>0$.
\end{theorem}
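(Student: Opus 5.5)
We may replace $G$ by its closure in $O(n+1)$, since closing the group does not change the orbit closures and hence not the quotient metric space. So assume $G\subset O(n+1)$ is compact, acting linearly on $V=\R^{n+1}$. If $V^G\neq 0$, there is a unit fixed vector $v$, and the orbits of $v$ and $-v$ are single points at distance $\pi$; thus $\diam \ge \pi$. In general we split $V=\bigoplus V_i$ into $G$-irreducible summands. If there are at least two summands, take unit vectors $x\in V_1$ and $y\perp V_1$. Then every point of $Gx$ is orthogonal to every point of $Gy$, so the orbit distance is $\pi/2$. It remains to treat irreducible $V$ of dimension at least $3$. (If $\dim V=2$ and $G$ is irreducible, the orbits are finite sets or all of $S^1$, and an elementary argument applies.) The key dichotomy is between transitive and nontransitive actions on the sphere.

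The main tool is an invariant polynomial of bounded degree. Suppose there is a nonconstant $G$-invariant polynomial $f$ on $V$ of degree $d\le D$, where $D$ is a universal constant. Restrict $f$ to the sphere, normalize it to have mean zero and $\sup |f|=1$, and apply a Bernstein/Markov-type inequality for spherical harmonics of degree $\le D$. This inequality bounds the Lipschitz constant of $f$ on $S^n$ by $c(D)$, independently of $n$. Since $f$ is constant on orbits, two points $p,q$ with $f(p)=\max f$ and $f(q)=\min f$ lie in orbits at distance at least $(\max f-\min f)/c(D)\ge 1/c(D)$. Here we use that the oscillation of a mean-zero function with $\sup|f|=1$ is at least $1$. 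So the proof reduces to showing that every irreducible nontransitive $G$ admits a nonconstant invariant of degree at most $D$.

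For this we invoke the structure theory. Let $G^0$ be the identity component. If $G$ is finite or $G^0$ is small, we use Jordan-type theorems: a finite $G$ has an abelian normal subgroup of index bounded by a function of $n$. That bound is not uniform, so instead we use the GLLM-style approach. The quantity $\sup_{p}\min_{g}\,d(p,gp)$-type estimates of Green show that for finite groups the diameter tends to $\pi/2$; combined with compactness in each fixed dimension (Greenwald), this yields a uniform bound. For positive-dimensional $G^0$, invariants of degree $2$ or $4$ are produced as follows. If $V$ restricted to $G^0$ is not irreducible over $\R$, or carries a $G^0$-invariant complex or quaternionic structure $J$, then the moment-map norm $|\mu(x)|^2$ is a $G$-invariant quartic. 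Here $\mu$ is the moment map for the $G^0$-action, or for the centralizer, transported over the finitely many components. This quartic is nonconstant unless the action is transitive, by the Kirwan-type characterization: $|\mu|^2$ is constant on $S^n$ only for the transitive cases $U(m)$, $Sp(m)\cdot S^1$, and so on. If $G^0$ acts $\R$-irreducibly of real type, then the quartic $x\mapsto \sum_a\ip{X_a x}{x}^2$ over an orthonormal basis $X_a$ of $\Lie(G)$ works. This sum is zero because each $X_a$ is skew, so one needs instead the symmetric-square component: the projection of $x\otimes x$ onto $\Sym^2$ is an isotypic piece. Its norm is invariant and of degree $4$, and it is nonconstant unless $G^0$ is transitive on the sphere (Onishchik–Montgomery–Samelson list).

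The main obstacle I expect is the finite (or virtually abelian) case, where no low-degree invariant need exist uniformly. There I would follow the GLLM strategy: pass to a maximal normal abelian subgroup, decompose $V$ into weight spaces, and find two points whose orbits are separated by a definite angle using the combinatorics of the permuted isotypic components.
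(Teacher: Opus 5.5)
The paper does not prove this statement. It imports it from \cite{GLLM} as the rank-one input, and its own invariant-function method stops short of the sphere: in real rank one the compression energy vanishes identically (\cref{rem:real-energy-rank-one}), and the Grassmannian constant fails (\cref{ex:rank-one-tail}). Your outline therefore has to stand on its own. Its skeleton is sound and parallels what the paper does for Grassmannians:
\begin{itemize}
\item pass to the closure;
\item a fixed vector gives diameter $\pi$, and a proper invariant splitting gives $\pi/2$;
\item Bernstein's inequality on great circles turns a nonconstant invariant of degree $d$ into $\diam\ge 2/d$, as in \cref{lem:oscillation};
\item finite groups follow from Green's asymptotic estimate for large $n$ plus Greenwald's bound for each of the finitely many remaining $n$, as in \cref{sec:finite}.
\end{itemize}
The gap is the claim that a positive-dimensional $G^0$ always yields a nonconstant $G$-invariant of degree at most $4$. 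As stated, this fails in two places.

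\textbf{The scalar case.} Here $|\mu|^2$ is constant although $G$ is not transitive. Take $m\ge2$, a finite complex-irreducible $F\subset U(m)$, and $G=U(1)\cdot F$ acting on $\C^m$.
\begin{itemize}
\item $G$ is real-irreducible with no fixed vectors, so it survives your reductions.
\item $G^0$ is the scalar circle, with moment map a constant multiple of $|x|^2\,iI$. The moment map $x\mapsto ixx^*$ of its centralizer $U(m)$ has norm a multiple of $|x|^4$. So every moment-map norm you propose is constant on $S^{2m-1}$, yet $\dim G=1$.
\item The dichotomy from \cref{thm:GP} on the irreducible $\C P^{m-1}$ is ``trivial or transitive'', not ``transitive''. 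This example is exactly the trivial alternative.
\item The same happens for $F\cdot Sp(1)$ on $\HH^m$ with $G^0$ the right scalars. There no complex structure commuting with $G^0$ need be $F$-invariant.
\end{itemize}
These are finite-group problems on projective space. The sphere form of Green's theorem does not address them, and your last paragraph only defers this ``virtually abelian'' case to GLLM. The common-measure unitary form of Green's estimate used in \cref{sec:finite} would handle the complex scalar case, since $|\ip{hv}{z}|$ ignores scalars, but that would have to be set up.

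\textbf{The real-type case.} The norm of the projection of $x\otimes x$ onto an isotypic summand of $\Sym^2V$ need not be nonconstant for nontransitive actions.
\begin{itemize}
\item For $SO(3)$ on traceless symmetric $3\times3$ matrices (cohomogeneity one on $S^4$), Newton's identities give $\tr x^4=\frac12(\tr x^2)^2$. So every invariant quartic is a multiple of $|x|^4$.
\item The same holds for $SU(3)$, $Sp(3)$, $F_4$ acting on $\R^8,\R^{14},\R^{26}$.
\item The adjoint action of $E_8$ has no nonconstant invariant of degree below $8$.
\end{itemize}
The Montgomery--Samelson/Onishchik list of transitive sphere actions gives no implication ``constant low-degree invariant $\Rightarrow$ transitive''. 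A rigidity theorem of that form, with an explicit dimension cutoff, is exactly what the paper must prove in its own setting (\cref{thm:GP}, \cref{lem:real-quadric}, and \cref{thm:quartic-rigidity} with $N>28$). Nothing of the kind is supplied here.

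\textbf{Multiplicity.} When $V|_{G^0}$ has real-type multiplicity $U\otimes\R^k$, there is no $G$-invariant symplectic form. So ``the moment map for the centralizer, transported over the components'' is undefined. You would need a different invariant, for example $\tr\bigl((X^TX)^2\bigr)$ with $X$ viewed as a $(\dim U)\times k$ matrix, and you would still have to prove it is nonconstant.
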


The restriction $n\ge2$ is necessary, since quotients of the unit circle
by cyclic rotation groups have arbitrarily small positive diameter.
For a fixed homogeneous space, the same paper identified exactly
when a positive diameter gap exists.

\begin{theorem}[\cite{GLLM}]\label{thm:fixed-space}
Let $M$ be a compact homogeneous Riemannian manifold.  Then $\pi_1(M)$
 is finite if and only if there exists $\eps_M>0$ such that, for  every
subgroup $G\subset\Isom(M)$, either  $\diam(M/G)=0$ or $ \diam(M/G)>\eps_M.$
\end{theorem}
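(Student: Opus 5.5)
We prove the two implications separately; both rest on the structure theory of compact homogeneous spaces. Write $M=K/H$ with $K=\Isom(M)$ compact (Myers--Steenrod), and let $K_0$ be its identity component.

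\emph{Infinite $\pi_1(M)$ implies no gap.} We produce subgroups $G\subset\Isom(M)$ with $\diam(M/G)$ positive but arbitrarily small. A compact homogeneous manifold with infinite fundamental group has a finite cover of the form $T^k\times N$ with $k\ge1$. At the level of the isometry group, the standard fact (Montgomery; Gorbatsevich--Onishchik) is that $K_0$ contains a central torus $T$ whose orbits carry the infinite part of $\pi_1$. Let $T$ act on $M$ by isometries commuting with $K_0$. Choose a closed subgroup $L\subset K_0$ acting transitively on $M/T$; for instance, $L=$ the semisimple part of $K_0$ times a codimension-one subtorus of $T$. Then take $G_m=L\cdot C_m$, where $C_m\subset T$ is a cyclic group of order $m$ generating a circle direction transverse to $L$. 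The quotient $M/G_m$ is a circle of length $c/m$, or a quotient of such a circle. Its diameter is therefore positive and tends to $0$ as $m\to\infty$, so no $\eps_M$ exists. The point to check is that the $L$-orbits have codimension exactly one and are permuted freely by the transverse circle. This is arranged by taking $L$ to be the kernel of a surjective homomorphism $K_0'\to S^1$ defined on a suitable finite-index piece $K_0'$, which exists precisely because $\pi_1$ is infinite.

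\emph{Finite $\pi_1(M)$ implies a gap.} We argue by contradiction. Suppose there are subgroups $G_j$ with $0<\diam(M/G_j)\to0$. Replacing $G_j$ by its closure changes neither the quotient metric nor the diameter, so we may assume each $G_j$ is closed. The closed subgroups of the compact Lie group $K$ form a compact space in the Hausdorff topology. Pass to a subsequence with $G_j\to G_\infty$.
\begin{enumerate}
\item The $G_\infty$-orbits are limits of $G_j$-orbits, which become $\delta_j$-dense with $\delta_j\to0$. Hence $G_\infty$ acts transitively.
\item By Montgomery--Zippin, for large $j$ the group $G_j$ is conjugate, by an element close to the identity, to a subgroup of $G_\infty$. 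We may therefore assume $G_j\subset G_\infty$. Passing to identity components, $(G_j)_0$ is contained in $(G_\infty)_0$ up to finite index issues. The connected group $(G_\infty)_0$ is still transitive on $M$, since $M$ is connected.
\item If $G_j$ is not transitive, consider the orbit of $(G_j)_0$ through a point $p$. This orbit must be a proper subset of the $(G_\infty)_0$-orbit $M$. Since $\pi_1(M)$ is finite, a transitive compact connected group can be taken semisimple, up to a torus acting trivially or through finitely many components. The small-diameter condition forces $(G_j)_0\cdot p$ to be $\delta_j$-dense. This holds along with $G_j/(G_j)_0$, which has bounded-index-like behavior near $G_\infty$.
\item A sequence of closed subgroups converging to $G_\infty$ whose orbits are not everything but are becoming dense must ``wind around'' a torus. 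This is the key step. Such a family contains finite cyclic subgroups of growing order in a torus $S\subset G_\infty$, with $G_j\cap S$ finite of large order. It also requires the $S$-orbits to cover a direction in $M$ not reached by the rest of $G_j$. The consequence is a surjection of $\pi_1$ of an orbit onto $\mathbb Z$ that descends to $M$, contradicting finiteness of $\pi_1(M)$.
\end{enumerate}

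The main obstacle is step~4. The task is to turn ``almost transitive but not transitive'' into an honest circle factor. I would first try to show that the connected parts stabilize: $(G_j)_0$ converges to a connected subgroup $H_0$ with $H_0$ of codimension at least one in the transitive group. One then needs $\dim$ of the $H_0$-orbits to be less than $\dim M$, together with density coming from finite extensions. Density from a finite extension of a group with lower-dimensional orbits requires component groups of unbounded order. Jordan's theorem then forces those component groups to be essentially abelian and to lie in a torus normalizing $H_0$. That torus descends to a circle action on $M/H_0$, and the circle produces infinite $\pi_1$. The required fundamental-group statement is that the long exact sequence of $H_0\to G_\infty\to M$ yields an infinite abelian quotient of $\pi_1(M)$.
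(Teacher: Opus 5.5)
The paper does not prove this statement. It is imported from \cite{GLLM} and used as a black box, so your proposal has to stand on its own.

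Your direction ``$\pi_1(M)$ infinite $\Rightarrow$ no gap'' is sound in outline. Write $K_0=S\cdot Z$, with $S$ semisimple and $Z$ the central torus. The exact sequence for $S\to M$ shows that $S$ is not transitive, so $M/S$ is a torus of positive dimension. Take $L$ to be the kernel of $K_0\to M/S\to S^1$. Do not take $S$ times an arbitrary codimension-one subtorus of $Z$, which may still be transitive on $M/S$. Then $M/L$ is a round circle of some length $\ell>0$ on which $Z$ acts by rotations. Adjoining suitable cyclic subgroups of $Z$ gives quotients of diameter $\ell/(2m)$.

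The direction ``$\pi_1(M)$ finite $\Rightarrow$ gap'' has a genuine gap, exactly at your step~4, which is asserted rather than proved. Your sketch for it assumes that identity components stabilize and that $H_0=\lim (G_j)_0$ has orbits of dimension $<\dim M$. Neither follows from non-transitivity of the $G_j$. Closed circles in $T^2$ with slopes $p_j/q_j$ tending to an irrational number converge to $T^2$. So $H_0$ can be strictly larger than every $(G_j)_0$, and can be transitive although no $(G_j)_0$ is. Step~3's ``bounded'' behaviour of $G_j/(G_j)_0$ is also unjustified, since finite $G_j$ of unbounded order are allowed.

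Your Jordan/torus idea, set up correctly (after the Montgomery--Zippin conjugation $G_j\subset G_\infty$), handles only the case where $H_0$ is not transitive:
\begin{enumerate}
\item $H_0$ is normalized by $G_\infty$.
\item The images of $G_j\cap G_\infty^0$ in $G_\infty^0/H_0$ are finite for large $j$, since no-small-subgroups kills the image of $(G_j)_0$, and they converge to $G_\infty^0/H_0$. By Jordan's theorem, $G_\infty^0/H_0$ is a torus.
\item If $H_0$ were not transitive, $M\to M/H_0$ would be a bundle over a positive-dimensional torus with connected fibres, contradicting finiteness of $\pi_1(M)$.
\end{enumerate}
This gives $H_0\supseteq S:=[G_\infty^0,G_\infty^0]$, and the same fibration argument shows $S$ is transitive. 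It says nothing yet about the $G_j$ themselves.

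The missing ingredient is a rigidity statement for the semisimple part: a compact semisimple Lie group is not a Hausdorff limit of proper closed subgroups. Concretely, project $(G_j)_0$ to $\bar S=G_\infty^0/Z(G_\infty^0)^0$. Because $H_0\supseteq S$, these images are closed connected subgroups converging to $\bar S$. A proper closed connected subgroup of $\bar S$ has the form $D\cdot T$, where $D$ is semisimple and $T$ is a torus centralizing $D$. There are only finitely many conjugacy classes of such $D$. Hence the subgroup lies in a conjugate of one of finitely many proper subgroups $D\cdot T_D$, with $T_D$ a maximal torus of the centralizer, and such subgroups cannot accumulate on $\bar S$. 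So the images equal $\bar S$ for large $j$, i.e.\ $(G_j)_0\cdot Z(G_\infty^0)^0=G_\infty^0$. Hence $(G_j)_0\supseteq S$, and $G_j$ is transitive, a contradiction.

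This closes the argument without producing the circle factor your step~4 aims for. Without it, the case ``$H_0$ transitive, $(G_j)_0$ not'' remains open.
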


The constant in \cref{thm:fixed-space} depends on $M$. Even after
normalizing the diameter of $M$, it cannot be chosen independently of
the compact homogeneous space; see \cite[Example~32]{GLLM}. This leads to the following question.

\begin{question}[{\cite[Question~2]{GLLM}}]\label{question:Q}
For simply connected compact symmetric spaces of fixed diameter,
is there a positive lower bound for the diameters of their
positive-diameter isometric quotients depending only on the rank?
\end{question}

The rank-one case of \cref{question:Q} is already recorded immediately
after Question~2 in \cite{GLLM}. Besides spheres, the simply
connected compact rank-one symmetric spaces are complex and quaternionic
projective spaces and the Cayley plane $\mathbb OP^2$. The projective spaces isometric to spheres are already covered by
\cref{thm:sphere-gap}. The remaining projective cases follow by lifting
isometry actions through the Hopf fibrations, and the Cayley plane is
covered by \cref{thm:fixed-space}.
For a subgroup $G\subset\Isom(M)$ of the isometry group of a compact
Riemannian manifold, we use $M/G$ to mean the metric quotient
$M/\overline G$. The orbit-distance formula for $G$ agrees with that
for its closure; for a nonclosed $G$, it is a pseudometric on the raw
orbit set. The quotient metric and normalized quotient diameter are
\begin{align*}
d_{M/G}(Gp,Gq)=\inf_{g\in G}d_M(p,gq),\qquad
 \Delta(M,G)=\frac{\diam(M/G)}{\diam M}.
\end{align*}
The ratio $\Delta(M,G)$ is unchanged by scaling the metric on $M$. Our main result answers the irreducible case of \cref{question:Q}.

\begin{theorem}\label{thm:main}
For every positive integer $r$, there exists $\eps_r>0$ such that
every simply connected irreducible compact
Riemannian symmetric space $M$ of rank $r$, and every subgroup
$G\subset\Isom(M)$, either $\diam(M/G)=0$ or  $\diam(M/G)\ge\eps_r\diam M.$
\end{theorem}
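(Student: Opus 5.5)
By the classification of simply connected irreducible compact symmetric spaces, for fixed rank $r$ there are only finitely many isometry classes (up to scaling) outside the classical families whose dimension grows with $n$. For each such single space \cref{thm:fixed-space} gives a constant $\eps_M>0$; since $\pi_1(M)$ is trivial and $\Delta(M,G)$ is scale invariant, taking the minimum over this finite list handles all exceptional spaces and all classical spaces of bounded dimension. It therefore remains to treat the infinite families of fixed rank $r$ and large $n$. These are the real, complex and quaternionic Grassmannians $\Gr_r(\F^{n})$ with $\F\in\{\R,\C,\HH\}$. The group manifolds $SU(r+1)$, $Spin$, $Sp(r)$ and the spaces $SU(r+1)/SO(r+1)$, $SU(2r+2)/Sp(r+1)$, $SO(2r)/U(r)$, $Sp(r)/U(r)$ have dimension bounded in terms of $r$, so they are already covered. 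I will check this list carefully against Cartan's tables.

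For the Grassmannians the plan is the following dichotomy. Let $G\subset\Isom(M)$ be closed with $\diam(M/G)>0$. If $\diam(M/G)$ is small, every orbit is $\delta$-dense for a small $\delta$. It then suffices to exhibit a nonconstant $G$-invariant function $f$ on $M$ that is a trigonometric polynomial of degree at most $D(r)$ along geodesics, i.e.\ lies in a sum of Laplace eigenspaces of bounded index, normalized so that $\max f-\min f=1$. A Bernstein-type inequality along geodesics bounds $|\nabla f|\le C(r)\|f\|_\infty$ uniformly in $n$. Then $\delta$-density gives $\max f-\min f\le C(r)\delta$, hence $\delta\ge 1/C(r)$. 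The crux is producing such an $f$ whenever $G$ does not act transitively. Since $\Isom(M)$ is essentially $O(n)$, $U(n)$ or $Sp(n)$ (up to finite extensions such as complex conjugation and $V\mapsto V^\perp$ when $n=2r$), I will reduce to $G$ a closed subgroup of that group acting on $\F^n$.

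For $\F=\R,\C$ I would use the moment-map/projection viewpoint. Embed $M$ via $V\mapsto P_V$, the orthogonal projection, into Hermitian matrices; the coordinates are degree-$2$ trigonometric functions. Average $P_V$ over $G$ (or over $G^0$) to get a $G$-equivariant map into the space of $G$-invariant elements, or compose with the moment map of $G^0$ on $\mathfrak g^*$ and take $|\mu|^2$ or $\tr(P_V A)$ for $A$ in the commutant. If all such low-degree invariants are constant, then $G$ acts irreducibly on $\F^n$ and on the symmetric/Hermitian quadratic data. Here I would invoke the classification of transitive actions on Grassmannians or on spheres, with \cref{thm:sphere-gap} handling the rank-one reduction, to conclude that $G$ is transitive, i.e.\ $\diam=0$. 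Disconnected $G$ is handled by the same averaging, since the invariants are polynomial.

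The quaternionic case is where I expect the main obstacle. Quadratic invariants can all be constant for $Sp(1)\cdot H$-type subgroups even when the action is not transitive. I would pass to degree-$4$ invariants, namely the quartic $\Phi(V)=\tr(P_V\otimes P_V\cdot \Omega)$ built from the $G$-invariant quartic tensors. I would then show that if all quartic invariants are constant, the $G$-invariant quartic forms coincide with those of $Sp(n)$. This forces $G$ to preserve a quaternionic-Kähler-type structure, which reduces $G$ to the list of compact quaternionic symmetric pairs (Wolf spaces). Checking that list finally yields transitivity on $\Gr_r(\HH^n)$ or a nonconstant quartic invariant. This rigidity step, which must be made uniform in $n$, is the delicate part.
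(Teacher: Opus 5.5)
Your reduction to the three Grassmannian families and the Bernstein/Lipschitz mechanism match the paper. There is, however, a genuine gap: nothing in your plan handles \emph{finite} acting groups on the Grassmannians with $N$ large, and the fixed-space theorem does not cover these because $N$ is unbounded. For finite $G$ the moment map of $G^0$ vanishes, and the commutant invariants $\tr(P_VA)$ are constant whenever $G$ acts irreducibly.

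More seriously, your inference ``all low-degree invariants constant $\Rightarrow$ transitive'' is false for finite groups. The $n$-qubit Clifford group in $U(2^n)$ is a unitary $3$-design. Hence every invariant function on $\Gr_r(\C^{2^n})$ that is a polynomial of degree at most $3$ in the entries of the projector equals its $U(2^n)$-average and is therefore constant. Such functions have trigonometric degree up to $6$ along rank-one rotations, yet the group acts through a finite quotient. Producing a bounded-degree nonconstant invariant for every finite group, uniformly in $N$, would need deep Larsen-alternative-type input that you do not supply.

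The paper treats this case with a different tool (\cref{thm:finite}). Green's matrix-coefficient estimate, with a probability measure independent of the vector, yields one unit vector $x$ satisfying $\max_h|\pr_{hE}x|^2\le 2\kappa r/(1+a\log(\kappa N))$. This gives a principal angle close to $\pi/2$ against every translate of a fixed plane, and hence $\Delta\ge 1/(3\sqrt r)$ for $N\ge N_0(r)$.

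For $\dim G\ge1$ your outline is closer to the paper but omits the step that does the work. Transitivity has to be \emph{deduced} from constancy; a classification of transitive actions only helps afterwards. The paper obtains it from the constant-moment-map-norm theorem \cref{thm:GP}: on an irreducible compact K\"ahler manifold, constant $\|\pr_\h\mu\|^2$ forces a transitive action.

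\begin{itemize}
\item \textbf{Real case.} Real Grassmannians with $r\ge3$ are not K\"ahler. The paper therefore uses the compression energy $q_r(P)=\sum_\nu\|PA_\nu P\|^2$ and \cref{lem:real-subsets} to transfer its constancy to constancy of $\|\pr_\h(x\wedge y)\|^2$ on the two-plane quadric, and only then applies \cref{thm:GP}.
\item \textbf{Quaternionic case.} You likewise need a bridge from constancy of $q_r$ on $r$-planes to constancy of the quartic $f(x)=\sum_\nu|x^*A_\nu x|^2$ on the sphere. The paper gets this from a scalar Casimir (using irreducibility of the whole group), subset averaging (requiring $N\ne2r$), and \cref{lem:subspace} in the reducible case. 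Only after that does the four-form and Wolf-space rigidity argument apply.
\item \textbf{Disconnected groups.} Averaging over components does not work: with unboundedly many components an average can become constant. The invariants must be built canonically from $\h=\Lie(G^0)$ with the restricted trace form, which every component preserves.
\end{itemize}
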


\begin{introremark}
There is no positive uniform gap independent of rank, even among
simply connected irreducible compact symmetric spaces; see \cref{ex:rank}.
The irreducibility  in \cref{thm:main} is needed when only the total diameter is normalized; 
independently scaled product factors give counterexamples; see  \cref{ex:products}. 
\end{introremark}

For fixed rank $r$, the families with unbounded dimension of  simply connected irreducible compact Riemannian symmetric spaces are Grassmannians,
\begin{align}
\label{eq:families}
 X_{\R,r,N}=\Gr_r^+(\R^N),\qquad
 X_{\C,r,N}=\Gr_r(\C^N),\qquad
 X_{\HH,r,N}=\Gr_r(\HH^N),
\end{align}
where $N\ge2r$, and the superscript $+$ denotes oriented real planes.   Recall that an oriented real plane in $\Gr_r^+(\R^N)$ consists of a plane together with a choice of orientation, and the forgetful map
\begin{align}
\label{eq:forgetmap}
\pi:\operatorname{Gr}_r^+(\mathbb R^N)
\longrightarrow\operatorname{Gr}_r(\mathbb R^N)
\end{align}
is a two-sheeted covering and a local diffeomorphism.

\begin{theorem}\label{thm:tail}
Let $N>\max\{28,2r\}$. Assume either that $\F=\R$ and $r\ge2$,
or that $\F\in\{\C,\HH\}$ and $r\ge1$. If
$G\subset \Isom(X_{\F,r,N})$ is a compact nontransitive group with $\dim G\geq1$, then
\begin{align}
\label{eq:tail}
 \Delta(X_{\F,r,N},G)\ge\frac1{\pi r}.
\end{align}
\end{theorem}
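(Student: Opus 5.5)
The plan is to prove \eqref{eq:tail} with the \emph{trigonometric-degree principle}. Suppose $f$ is a nonconstant $\overline G$-invariant function on $M=X_{\F,r,N}$ whose restriction to every unit-speed geodesic is a trigonometric polynomial of degree at most $k$. Pick an orbit where $f$ attains its maximum and one where it attains its minimum. Any minimizing geodesic $\gamma$ between them yields a trigonometric polynomial $T(t)=f(\gamma(t))$ whose global maximum over $\R$ is attained at $t=0$. By the classical Bernstein--Szeg\H{o} argument, $T$ cannot reach its global minimum for $|t|<\pi/k$. Hence $\diam(M/G)\ge\pi/k$.

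For the Grassmannians we model $M$ as the space of orthogonal projections $P$ of rank $r$ (Hermitian over $\F$), with the metric in which principal angles measure distance. Then $\diam M=\tfrac{\pi}{2}\sqrt r$, and along a geodesic with principal angles $\theta_i t$ every entry of $P$ is a trigonometric polynomial of degree $\le2$. So it suffices to produce a nonconstant invariant $f$ that is polynomial of degree $\le2$ in $P$. That gives $k\le4$ and
\[
\Delta\ \ge\ \frac{\pi/4}{\pi\sqrt r/2}=\frac1{2\sqrt r}\ \ge\ \frac1{\pi r}.
\]
In the oriented real case we first pass through the covering \eqref{eq:forgetmap}. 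Invariant functions of $P$ pull back, and distances in $\Gr_r^+$ dominate those downstairs up to the orientation flip, which the bound tolerates.

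\textbf{Step 1 (reduction to linear groups).} For $N>\max\{28,2r\}$ the isometry group of $X_{\F,r,N}$ is, modulo finitely many components, $O(N)$, $U(N)\rtimes\langle\text{conj}\rangle$ (together with $P\mapsto 1-P$ when $N=2r$), or $Sp(N)$. The bound on $N$ excludes low-dimensional coincidences and triality. Let $G_0$ be the identity component of $\overline G$, and let $\gfr\ne0$ be its Lie algebra, viewed inside $\son(N)$, $\sun(N)$, or $\spn(N)$. Every element of $\overline G$ normalizes $G_0$, so it preserves $\gfr$ and the trace form.

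\textbf{Step 2 (the invariant).} Define the moment map $\mu(P)=\pr_{\gfr}(\sqrt{-1}\,P)$, or its real and quaternionic analogues, using the orthogonal projection in the trace form. Set $f=|\mu|^2$. This function is quadratic in $P$. It is invariant under all of $\overline G$, including disconnected components, because $\mu$ is equivariant and the norm is $\Ad$-invariant. The only remaining task is to show that $f$ is nonconstant whenever $G$ is nontransitive.

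\textbf{Step 3 (rigidity, the main obstacle).} Suppose $f$ is constant on the orbit $\{gP_0g^{-1}\}$ of the full unitary-type group. Polarize the resulting identity. Differentiating twice at a point $P$ in directions $[Y,P]$ gives algebraic identities on the quadratic form $Q(A)=|\pr_\gfr A|^2$ restricted to rank-$r$ projections. Since $N\ge2r$, these projections span all Hermitian matrices. In the real and complex cases this forces $\pr_\gfr$ to act on Hermitian-type matrices like a multiple of the identity modulo scalars. That in turn forces $\gfr$ to be all of $\sun(N)$ or $\son(N)$, or, in the real case with $N$ even, a subalgebra such as $\mathfrak u(N/2)$ that still acts transitively on $\Gr_r^+$; this contradicts nontransitivity. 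The quaternionic case is harder, because $\sqrt{-1}P$ is not available. There I would use a quartic invariant $\sum_a\tr\bigl((X_aP)^2\bigr)$, with $X_a$ an orthonormal basis of $\gfr$. Its constancy is a quartic identity, and I would show it forces $(\spn(N),\gfr)$ to have the isotropy-type structure of a compact quaternionic symmetric pair. Wolf's classification then lists the candidates, and each candidate is checked to be either transitive on $\Gr_r(\HH^N)$ or incompatible with $N>28$. This degree-four invariant raises $k$ to $8$, giving $\Delta\ge\tfrac{\pi/8}{\pi\sqrt r/2}=\tfrac1{4\sqrt r}\ge\tfrac1{\pi r}$, so the bound survives. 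I expect this quaternionic rigidity step to be the core difficulty.
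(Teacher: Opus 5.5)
\textbf{Verdict: there is a genuine gap.} Step~3, showing that the invariant is nonconstant, is the whole content of the theorem, and your sketch does not establish it in any of the three cases.

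\textbf{The analytic step is sound, with one caveat.} Running Bernstein--Szeg\H{o} along minimizing geodesics gives $\diam\ge\pi/4$ for any nonconstant invariant that is quadratic in $P$. This is stronger than the paper's $1/(2\sqrt r)$ from \cref{lem:oscillation}. However, along a general geodesic the entries of $P(t)$ have frequencies $2\theta_i$, which are usually incommensurable. So $f\circ\gamma$ is a bounded exponential sum of type at most $4$, not a trigonometric polynomial. You need the Bernstein--Szeg\H{o} inequality for real entire functions of exponential type, or you should restrict to rank-one rotations as the paper does.

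\textbf{Real case.} The invariant you name is identically zero. For $\gfr\subset\son(N)$ and real symmetric $P$ one has $\tr(PA)=0$, so $\pr_\gfr(\sqrt{-1}P)=0$. For $r\ge3$, $\Gr_r^+(\R^N)$ is not an adjoint orbit and carries no such moment map. You need a different invariant, such as the compression energy $\sum_\nu\|PA_\nu P\|_{\mathrm{Frob}}^2$. The paper proves its nonconstancy indirectly. Constancy on all coordinate $r$-planes, for every orthonormal basis, forces via a subset lemma the pair energy $\sum_\nu\ip{A_\nu x}{y}^2$ to be constant. That is a constant moment-map norm on the K\"ahler quadric $\Gr_2^+(\R^N)$. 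Incidentally, $\mathfrak u(N/2)$ is not transitive on $\Gr_r^+(\R^N)$ for $r\ge2$, since the restricted K\"ahler form is an invariant.

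\textbf{Complex case.} Your claim that constancy of $|\pr_\gfr(\cdot)|^2$ on rank-$r$ projections, plus spanning, forces $\pr_\gfr$ to be scalar is false. $Sp(N/2)\subset SU(N)$ is transitive on $\C P^{N-1}$, so your $f$ is constant there although $\gfr\ne\sun(N)$. A quadratic form can be constant on this curved orbit without being a multiple of the norm. What turns constant $|\mu|^2$ into transitivity is a real theorem: Gorodski--Podest\`a (\cref{thm:GP}), combined with Riemannian irreducibility. For the quadric one also needs Wolf's classification of transitive subgroups when $N>8$. Your proposal has no substitute for this.

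\textbf{Quaternionic case.} Your direction (quartic rigidity plus Wolf's list) matches the paper, but essential steps are missing.
\begin{enumerate}
\item For $r\ge2$, constancy of the rank-$r$ invariant is not yet a quartic identity on vectors. The paper first disposes of groups preserving a proper quaternionic subspace, via \cref{lem:subspace}. It then uses irreducibility of the \emph{whole} group to make the Casimir $-\sum_\nu A_\nu^2$ scalar. Only then does averaging over coordinate subsets give constancy of $f(x)=\sum_\nu|x^*A_\nu x|^2$, with the factor $(N-2r)/(N-2)$.
\item Reaching a symmetric pair requires identifying $Sp(1)$-invariant four-forms with invariant quartics, and then applying the vanishing Casimir four-form criterion.
\item Wolf's list contains $\mathfrak u(N)$ and $\son(N)\oplus\spn(1)$. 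These occur for every $N$ and are not transitive, so your dichotomy ``transitive or incompatible with $N>28$'' misses them. They must be excluded by evaluating $f$ directly.
\item Your degree count is off. $\sum_a\tr((X_aP)^2)$ is quadratic in $P$, so its trigonometric degree is $4$, not $8$. With $k=8$ you get $1/(4\sqrt r)$, which is below $1/(\pi r)$ at $r=1$ because $1/4<1/\pi$. So quaternionic rank one fails as written; with $k=4$ it works.
\end{enumerate}
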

\begin{remark}
The same numerical assertion  in \cref{thm:tail} fails for real rank one, as is shown in \cref{ex:rank-one-tail}.     
\end{remark}

The diameter of the quotient space does not change if the  acting group $G\subset\Isom(M)$ is replaced
by its closure. We may therefore work with compact groups, for which quotient diameter zero
is equivalent to transitivity.

For finite acting groups, \cref{thm:main} follows by combining Green's
estimate on matrix coefficients \cite{Green} with the fixed-space theorem
\cref{thm:fixed-space}; see \cref{sec:finite}.
If  the  acting group $G\subset\Isom(M)$   has  positive dimension,  the   rank one case of  \cref{thm:main} follows from  \cref{thm:sphere-gap} and \cref{thm:fixed-space}. For  fixed rank $r\geq 2$, all  simply connected irreducible compact Riemannian symmetric spaces belong to finitely many homothety classes outside the three Grassmannian families \eqref{eq:families}. Moreover, for any fixed
cutoff $N_0(r)$, the same conclusion holds after removing only the
Grassmannians with $N>N_0(r)$. Therefore, \cref{thm:tail} together with   \cref{thm:fixed-space} imply  \cref{thm:main}  if $G$  has  positive dimension.

The paper is organized as follows. In \cref{sec:reduction}, we  recall the classification
and isometry groups of compact symmetric spaces. 

In \cref{sec:analytic},  we develop
the metric and analytic estimates for Grassmannian manifolds  used throughout the proof.

In \cref{sec:finite}, we treat finite acting groups of \cref{thm:main},  by combining Green's
estimate on matrix coefficients \cite{Green} with the fixed-space theorem
\cref{thm:fixed-space}. 

In \cref{sec:real-complex}, we  prove \cref{thm:GP} by constructing  invariant
functions on real, complex, and quaternionic Grassmannians, with a
quartic rigidity argument for the quaternionic case. 

Finally,
in \cref{sec:assembly},  we  prove \cref{thm:main} and give
examples illustrating the necessity of its assumptions.

\section{Preliminaries}\label{sec:reduction}

We recall the complete list of simply connected
irreducible compact symmetric spaces, allowing the familiar coincidences
in small dimensions. The dimensions in the tables are real dimensions.
The classification and rank formulas are given in
\cite{Gorodski}. We first list the classical spaces.

\begin{center}
\small
\renewcommand{\arraystretch}{1.25}
\begin{tabular}{lllll}
\hline
Type & Space & Dimension & Rank & Parameters\\\hline
AI & $SU(n)/SO(n)$
 & $(n-1)(n+2)/2$ & $n-1$ & $n\ge2$\\
AII & $SU(2n)/Sp(n)$
 & $(n-1)(2n+1)$ & $n-1$ & $n\ge2$\\
AIII & $SU(p+q)/S(U(p)\times U(q))$
 & $2pq$ & $p$ & $1\le p\le q$\\
BDI & $SO(p+q)/(SO(p)\times SO(q))$
 & $pq$ & $p$ & $1\le p\le q$\\
CI & $Sp(n)/U(n)$
 & $n(n+1)$ & $n$ & $n\ge1$\\
CII & $Sp(p+q)/(Sp(p)\times Sp(q))$
 & $4pq$ & $p$ & $1\le p\le q$\\
DIII & $SO(2n)/U(n)$
 & $n(n-1)$ & $\lfloor n/2\rfloor$ & $n\ge2$.\\
 \hline
\end{tabular}
\end{center}
\begin{remark}
In the BDI row we exclude $(p,q)=(1,1)$, which gives a circle,
and $(p,q)=(2,2)$, which gives the reducible space $S^2\times S^2$.
The remaining BDI spaces are the oriented real Grassmannians
$\Gr_p^+(\R^{p+q})$; in particular, $p=1$ gives the sphere $S^q$. The AIII and CII rows are respectively
$\Gr_p(\C^{p+q})$ and $\Gr_p(\HH^{p+q})$.  
\end{remark}

For the exceptional spaces, it is convenient to specify the standard
Cartan pair by its compact group $G$ and isotropy Lie algebra $\kk$.
Here $G$ denotes the simply connected compact form, $\sigma$ is the
corresponding Cartan involution, and
$K=(G^\sigma)^0$ is the connected subgroup with Lie algebra $\kk$.
The space in each row is $G/K$.

\begin{center}
\small
\renewcommand{\arraystretch}{1.2}
\begin{tabular}{lllll}
\hline
Type & $G$ & $\kk$ & Dimension & Rank\\\hline
EI & $E_6$ & $\spn(4)$ & $42$ & $6$\\
EII & $E_6$ & $\sun(6)\oplus\spn(1)$ & $40$ & $4$\\
EIII & $E_6$ & $\son(10)\oplus\mathfrak u(1)$ & $32$ & $2$\\
EIV & $E_6$ & $\mathfrak f_4$ & $26$ & $2$\\
EV & $E_7$ & $\sun(8)$ & $70$ & $7$\\
EVI & $E_7$ & $\son(12)\oplus\spn(1)$ & $64$ & $4$\\
EVII & $E_7$ & $\mathfrak e_6\oplus\mathfrak u(1)$ & $54$ & $3$\\
EVIII & $E_8$ & $\son(16)$ & $128$ & $8$\\
EIX & $E_8$ & $\mathfrak e_7\oplus\spn(1)$ & $112$ & $4$\\
FI & $F_4$ & $\spn(3)\oplus\spn(1)$ & $28$ & $4$\\
FII & $F_4$ & $\son(9)$ & $16$ & $1$\\
G & $G_2$ & $\son(4)$ & $8$ & $2$.\\
\hline
\end{tabular}
\end{center}

Finally, the spaces of group type are
$(L\times L)/\operatorname{diag}L\cong L$, where $L$ is a simply
connected compact simple Lie group with a bi-invariant metric. Their
list is as follows; the entries in the last five rows have no free
parameter.

\begin{center}
\small
\renewcommand{\arraystretch}{1.2}
\begin{tabular}{lllll}
\hline
Type & $L$ & Dimension & Rank & Parameters\\\hline
$A_\ell$ & $SU(\ell+1)$ & $\ell(\ell+2)$ & $\ell$ & $\ell\ge1$\\
$B_\ell$ & $\operatorname{Spin}(2\ell+1)$
 & $\ell(2\ell+1)$ & $\ell$ & $\ell\ge2$\\
$C_\ell$ & $Sp(\ell)$
 & $\ell(2\ell+1)$ & $\ell$ & $\ell\ge3$\\
$D_\ell$ & $\operatorname{Spin}(2\ell)$
 & $\ell(2\ell-1)$ & $\ell$ & $\ell\ge4$\\
$G_2$ & $G_2$ & $14$ & $2$ & \\
$F_4$ & $F_4$ & $52$ & $4$ & \\
$E_6$ & $E_6$ & $78$ & $6$ & \\
$E_7$ & $E_7$ & $133$ & $7$ & \\
$E_8$ & $E_8$ & $248$ & $8$ &\\
\hline
\end{tabular}
\end{center}
\begin{remark}
The lower bounds on $\ell$ merely remove repetitions:
$Sp(1)\cong\operatorname{Spin}(3)\cong SU(2)$,
$Sp(2)\cong\operatorname{Spin}(5)$, and
$\operatorname{Spin}(6)\cong SU(4)$.
In particular, the group-type row uses the simply connected groups
$\operatorname{Spin}(n)$, rather than $SO(n)$.    
\end{remark}

For the full-isometry-group descriptions below, assume
$N>\max\{8,2r\}$.
Wolf's descriptions \cite{WolfGrass} give the following
surjective homomorphisms with finite kernels
\begin{align*}
Sp(N)&\longrightarrow\Isom(X_{\HH,r,N})=PSp(N),\\
 SU(N)\rtimes\langle\tau\rangle
 &\longrightarrow\Isom(X_{\C,r,N}),\\
 O(N)\times\langle\omega\rangle
 &\longrightarrow\Isom(X_{\R,r,N}).
\end{align*}
Here $\tau$ is complex conjugation, acting on $SU(N)$ by
$a\mapsto\bar a$, and $\omega$ reverses the orientation of every
real $r$-plane. The first kernel is $\{\pm I\}$, and the second is
the finite scalar subgroup of $SU(N)$. In the third map an element
$(a,\omega^\epsilon)$ first applies the ambient orthogonal map $a$
and then reverses the plane orientation if $\epsilon=1$.
If $r$ is odd, $-I$ already induces $\omega$, and the kernel is
$\{(I,1),(-I,\omega)\}$, where $1$ denotes the identity of
$\langle\omega\rangle$. If $r$ is even, the kernel is
$\{(I,1),(-I,1)\}$. 
%In particular, for $r=1$ the oriented-line
%model is the unit sphere and its full isometry group is $O(N)$
%acting faithfully on unit vectors; the extra $\omega$ factor is
%redundant. Orthogonal complementation appears when $N=2r$, and
%the additional triality is confined to $(r,N)=(4,8)$.

Let $\rho:\widehat K\to\Isom(X_{\F,r,N})$ be the corresponding
map, and let $G$ be a compact isometry group. Its full preimage
$\widehat G=\rho^{-1}(G)$ is a compact matrix group, with the same
orbits on the Grassmannian as $G$. If $G$ is finite, then
$\widehat G$ is finite because $\ker\rho$ is finite. In the complex
case its elements are unitary or antiunitary and hence real
orthogonal transformations. In the real case, forgetting orientation
leaves exactly the action of the projection of $\widehat G$ to
$O(N)$. A distance estimate for these underlying-plane orbits is a
lower bound for the oriented quotient distance. These observations
permit the finite-group estimate to be applied to real orthogonal
representations without any assumption about the components
of the original isometry group.

For a group with positive-dimensional identity component, put
$\h=\Lie(G^0)$ and identify this Lie algebra with its lift through
$\rho$; the kernel is discrete, so no Lie-algebra direction is lost.
Normality of $G^0$ implies that every component normalizes $\h$.
The induced conjugations preserve the ambient trace scalar products.
For example, an antiunitary complex isometry induces the orthogonal
Lie-algebra automorphism $A\mapsto a\bar A a^{-1}$, while real
orientation reversal commutes with the ambient action and acts
trivially on its Lie algebra. Consequently, changing an orthonormal
basis of $\h$ by any group element leaves its compression energies
unchanged. For projected moment maps, the possible additional sign
under an antiholomorphic isometry also disappears after taking the
squared norm. Thus the invariants used later descend to the quotient
by the full group. Replacing $G$ by $G^0$ alone would not justify this
conclusion: the number of components is not bounded in the theorem.

\section{Metric and analytic estimates }\label{sec:analytic}

Throughout this section, $\F$ is $\R$, $\C$, or $\HH$, and
$0<r<N$. We note that quaternionic vector spaces $\HH^N$ are right vector spaces, with
Hermitian product $\langle x,y\rangle_{\mathbb H}=x^*y
=\sum_{i=1}^N\overline{x_i}y_i.$
Its value is a quaternion.  We  use the real Riemannian scalar product
$\Rea(x^*y)$ on the underlying real vector space $\HH^N\cong\mathbb R^{4N}$. If an endomorphism
is quaternionic-linear, its real trace is four times the real part
of its quaternionic trace. In particular, $\Rea\tr_\HH(XY)
=\Rea\tr_\HH(YX)$, although the quaternionic trace itself need
not be cyclic.

Fixing an $r$-plane $P\subset \F^N$, a transverse plane is an $r$-plane $Q$ satisfying
$Q\cap P^\perp=\{0\}.$  It follows  immediately  that the orthogonal projection
$\operatorname{pr}_P|_Q:Q\longrightarrow P$ is an isomorphism.
Consequently, for every $v\in P$, there is a unique vector of $Q$ whose projection onto $P$ is $v$. We write this vector as $v+Lv,$ where $ Lv\in P^\perp.$
This defines an $\mathbb F$-linear map $L:P\to P^\perp$, and
\begin{align*}
Q=\operatorname{graph}(L)
=\{v+Lv:v\in P\}.
\end{align*}
Thus the correspondence $L\longmapsto\operatorname{graph}(L)$ provides coordinates on an open neighborhood of $P$ in the Grassmannian $\Gr_r(\F^N)$.  Differentiating this graph chart identifies
the tangent space and fixes the metric normalization
\begin{align}
\label{eq:metric}
 T_P\Gr_r(\F^N)=\Hom_\F(P,P^\perp),\qquad
 g_P(A,B)=\Rea\tr_\F(A^*B).
\end{align}
Equivalently, if $e_1,\ldots,e_r$ is an orthonormal basis of $P$,
then $|A|^2=\sum_{j=1}^r|Ae_j|^2$. The oriented real Grassmannian carries
the pullback metric under the map forgetting orientation;  see \eqref{eq:forgetmap}.

\begin{definition}\label{def:principal-angles}
For two $r$-planes $P,Q\subset\F^N$, let
$T=\pr_P|_Q:Q\to P$ be orthogonal projection restricted to $Q$.
Its singular values satisfy $1\ge s_1\ge\cdots\ge s_r\ge0$.
The principal angles are
\begin{align*}
\theta_j=\arccos s_j,\qquad
 0\le\theta_1\le\cdots\le\theta_r\le\pi/2.
\end{align*}
For real oriented planes, this definition uses their underlying
unoriented subspaces.
\end{definition}

To make the geometry explicit,  we diagonalize $T^*T$ on $Q$. It is possible  to choose an orthonormal eigenbasis 
$q_1,\ldots,q_r$ of $T^*T$, with \begin{align*}
T^*Tq_j=\cos^2\theta_j\,q_j,
\end{align*} where  $0\le\theta_1\le\cdots\le\theta_r\le\pi/2.$ For $j\le \operatorname{rank}(T)$, we have $\cos\theta_j>0$, and  define $p_j=\frac{Tq_j}{\cos\theta_j}.$ These vectors form an orthonormal basis of $\operatorname{im}T\subseteq P$. If  $\cos\theta_j=1$, then  $\theta_j=0$ and $q_j=p_j$. For $j>\operatorname{rank}(T)$, we have $\theta_j=\pi/2$, and the corresponding $p_j$ is chosen by orthogonal completion  of $\operatorname{im}T$ in $P.$
It follows that there is an orthonormal basis 
$p_1,\ldots,p_r$ of $P$ satisfying 
$\ip{p_i}{q_j}=\delta_{ij}\cos\theta_j$, $1\le i,j\le r$.
For each positive angle, we put
\begin{align*}
n_j=\frac{q_j-\cos\theta_j\,p_j}{\sin\theta_j}\in P^\perp.
\end{align*}
These $n_j$ are orthonormal.  In particular,   we have $n_j=q_j$ if $\theta_j=\pi/2$. The singular-value decomposition justifies these choices over each of the three fields.

\begin{lemma}\label{lem:distance}
For the unoriented Grassmannian with metric \eqref{eq:metric},
\begin{align*}
d(P,Q)=\left(\sum_{j=1}^r\theta_j^2\right)^{1/2},\qquad
 \cos\theta_r(P,Q)=
 \min_{\substack{x\in Q\\|x|=1}}|\pr_Px|.
\end{align*}
\end{lemma}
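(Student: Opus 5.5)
The second formula is pure linear algebra and I will prove it first. Since $T=\pr_P|_Q$ and $T^*Tq_j=\cos^2\theta_j\,q_j$, every unit vector $x=\sum_j c_jq_j\in Q$ satisfies
\begin{align*}
|\pr_Px|^2=\ip{T^*Tx}{x}=\sum_j|c_j|^2\cos^2\theta_j .
\end{align*}
This quantity is minimized at $x=q_r$, giving $\cos\theta_r$. The Rayleigh quotient argument is the same over $\R$, $\C$ and $\HH$, because $T^*T$ is a self-adjoint $\F$-linear operator with real eigenvalues.

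For the distance formula, the plan has three steps: realize the Grassmannian as a homogeneous space, write down explicit geodesics, and then show that the length of the connecting geodesic is minimal.

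\emph{Upper bound.} Using the bases $p_j,q_j,n_j$ constructed above, I will define $A\in\Hom_\F(P,P^\perp)$ by $Ap_j=\theta_jn_j$ when $\theta_j>0$ and $Ap_j=0$ otherwise. The curve
\begin{align*}
\gamma(t)=\operatorname{span}_\F\{\cos(t\theta_j)p_j+\sin(t\theta_j)n_j\}
\end{align*}
joins $P$ at $t=0$ to $Q$ at $t=1$. It is the orbit of $P$ under the one-parameter group $\exp(t\hat A)$, where $\hat A=\begin{pmatrix}0&-A^*\\A&0\end{pmatrix}\in\mathfrak u_\F(N)$ lies in $\mathfrak p$ for the Cartan decomposition attached to $P$. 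Hence $\gamma$ is a geodesic. By \eqref{eq:metric} its speed is $|A|=(\sum\theta_j^2)^{1/2}$: differentiating the graph chart at $P$ shows that the tangent vector there is $A$ itself, and the speed is constant along the geodesic. This gives $d(P,Q)\le(\sum\theta_j^2)^{1/2}$.

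\emph{Lower bound (main obstacle).} I plan to use the standard structure of compact symmetric spaces. Take $\mathfrak a\subset\mathfrak p$ to be the maximal abelian subspace spanned by the operators $p_j\mapsto n_j$ for a fixed orthonormal frame $p_j,n_j$ (so $\mathfrak a$ has dimension $r$, the rank). Every geodesic from $P$ is $K$-conjugate to one with initial vector in $\mathfrak a$, and the cut locus is determined by the restricted roots. These roots are $\pm\theta_i\pm\theta_j$ and $\theta_j$, together with $2\theta_j$ over $\C$ and $\HH$ or when $N\ge 2r+1$. Consequently the exponential map is injective on the open region $0<\theta_1<\cdots<\theta_r<\pi/2$ and its closure maps onto the Grassmannian. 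In particular, every geodesic from $P$ to $Q$ with initial vector in $\mathfrak a$ has parameters $(\phi_j)$ satisfying $\cos\phi_j=\pm\cos\theta_{\sigma(j)}$ modulo $\pi$, read off from the singular values of $\pr_P|_Q$.

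Any such geodesic therefore has length $(\sum\phi_j^2)^{1/2}\ge(\sum\theta_j^2)^{1/2}$, since each $|\phi_j|\ge\theta_{\sigma(j)}$ with $\theta_{\sigma(j)}\in[0,\pi/2]$. A minimizing geodesic exists by compactness and the Hopf--Rinow theorem, so the lower bound follows. Oriented real planes are excluded by hypothesis, as the statement concerns the unoriented Grassmannian.

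The step needing most care is justifying that every geodesic is $K$-conjugate into this flat. If one prefers to avoid root systems, I would instead attempt a direct comparison: the functions $\theta_j(P,\cdot)$ are $1$-Lipschitz in an $\ell^2$ sense, via a Wielandt--Hoffman-type inequality for singular values applied to $\pr_P|_{\gamma(t)}$ along a unit-speed curve. This would give $|\dot\theta|\le|\dot\gamma|$ almost everywhere, and integrating yields the lower bound.
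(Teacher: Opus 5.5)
Your proof is correct. The second formula and the upper bound agree with the paper's proof; the paper only computes the length of the rotating-frame curve and never needs it to be a geodesic. Your lower bound, however, takes a genuinely different route.

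The paper works with an arbitrary piecewise smooth curve $P(t)$ from $P$ to $Q$. It chooses a horizontal orthonormal frame $u_j(t)$, with $\dot u_j\perp P(t)$ and terminal value the principal frame $q_j$, so that $|\dot P|^2=\sum_j|\dot u_j|^2$. Each $u_j$ is then a curve on the real unit sphere from some $v_j\in P$ to $q_j$. The bound $\Rea\ip{v_j}{q_j}\le|\pr_Pq_j|=\cos\theta_j$ forces its length to be at least $\theta_j$, and Minkowski's integral inequality then gives $L\ge(\sum_j\theta_j^2)^{1/2}$.

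You instead take a minimizing geodesic (Hopf--Rinow), write it as $t\mapsto\exp(t\hat A)P$, and reduce $A$ by the $\F$-singular value decomposition. That reduction is all that ``$K$-conjugacy into $\mathfrak a$'' means here, since $K$ acts by $A\mapsto bAa^{-1}$, so the step you flagged as the main obstacle is easy. The endpoint then has principal angles $\arccos|\cos\phi_j|=\operatorname{dist}(\phi_j,\pi\mathbb Z)\le|\phi_j|$, which gives the bound.

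This argument is valid, and it additionally tells you which geodesics minimize. It does presuppose that \eqref{eq:metric} is the symmetric metric for which $\hat A\mapsto A$ is an isometry $\mathfrak p\to T_P$, and that geodesics through $P$ are one-parameter orbits. The paper's argument avoids this and is elementary linear algebra and calculus, uniform in $\F$ and in how $N$ compares with $2r$.

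Your paragraph on restricted roots and injectivity of the exponential map is never used, and its root list is wrong. Over $\R$ the roots $2\theta_j$ never occur and $\theta_j$ occurs only for $N>2r$. Over $\C$ and $\HH$ the roots $2\theta_j$ always occur and $\theta_j$ occurs only for $N>2r$. Also, $\dim\mathfrak a=\min(r,N-r)$ rather than $r$. It is best deleted.
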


\begin{proof}
Let $x=\sum_{j=1}^rq_j a_j\in Q$ be  a unit vector, where $a_j\in \F$. The orthogonality of the
projected principal vectors gives $\pr_Px=\sum_{j=1}^rp_ja_j\cos\theta_j$, so
\begin{align*}
|\pr_Px|^2=\sum_{j=1}^r|a_j|^2\cos^2\theta_j,
 \qquad \sum_{j=1}^r|a_j|^2=1.
\end{align*}
The smallest possible value is $\cos^2\theta_r$, attained at
$x=q_r$. This proves the second formula.

For the upper bound on distance, rotate each positive-angle
direction independently:
\begin{align*}
u_j(t)=\cos(t\theta_j)p_j+\sin(t\theta_j)n_j
 \quad(\theta_j>0),\qquad
 u_j(t)=p_j\quad(\theta_j=0).
\end{align*}
Let $P(t)$ be their $\F$-linear span, for $0\le t\le1$.
The vectors $u_j(t)$ are orthonormal, $P(0)=P$, and $P(1)=Q$.
Moreover, every $\dot u_j(t)$ is perpendicular to $P(t)$, so
\begin{align*}
g_{P(t)}(\dot P(t),\dot P(t))=|\dot P(t)|^2=\sum_{j=1}^r|\dot u_j(t)|^2=\sum_{j=1}^r \theta_j^2.
\end{align*}
The length of this curve is the right side of the first formula.

For the lower bound, consider an arbitrary piecewise smooth plane
curve $P(t)$ from $P$ to $Q$. Choose a moving orthonormal frame
$u_1(t),\ldots,u_r(t)$ with terminal frame $q_1,\ldots,q_r$ and
with $\dot u_j(t)\perp P(t)$. Such a horizontal frame exists. Indeed, 
starting with any orthonormal frame matrix $E(t)$, the matrix
$\Gamma(t)=E(t)^*\dot E(t)$ is skew-Hermitian; replacing $E(t)$ by
$E(t)R(t)$, where $\dot R=-\Gamma R$ with prescribed terminal
orthogonal or unitary value, removes its internal frame rotation.
This construction also works for quaternionic unitary matrices.
Write $v_j=u_j(0)\in P$. The initial frame need not be a principal
frame. Nevertheless,
\begin{align*}
\Rea\ip{v_j}{q_j}
 =\Rea\ip{v_j}{\pr_Pq_j}\le|\pr_Pq_j|=\cos\theta_j.
\end{align*}
Thus the real spherical length
$\ell_j=\int_0^1|\dot u_j(t)|\,dt$ is at least $\theta_j$.
Using \eqref{eq:metric} and the integral triangle inequality in
$\R^r$, we obtain
\begin{align*}
L(P(t))
 =\int_0^1\left(\sum_j|\dot u_j(t)|^2\right)^{1/2}\,dt
 \ge\left(\sum_j\left(\int_0^1|\dot u_j(t)|\,dt\right)^2\right)^{1/2}
 =\left(\sum_j\ell_j^2\right)^{1/2}
 \ge\left(\sum_j\theta_j^2\right)^{1/2}.
\end{align*}
Taking the infimum over all connecting curves proves the first
formula, including zero angles and right angles.
\end{proof}

\begin{remark}
   In the real case, \cref{lem:distance}  is the standard distance formula in
\cite[Section~4.3, p.~337]{EAS}; see also
\cite[Section~5.1, equation~(5.3)]{BZA}.
\end{remark}
\begin{lemma}\label{lem:diameters}
For $r\ge1$ and $N\ge2r$, in the metric \eqref{eq:metric},
\begin{align}
\label{eq:diameters}
 \diam X_{\C,r,N}=\diam X_{\HH,r,N}=\frac\pi2\sqrt r,
 \qquad \diam X_{\R,r,N}\le\pi\sqrt r.
\end{align}
The unoriented real Grassmannian also has diameter $\pi\sqrt r/2$.
For any two oriented real planes, distance is at least the distance
between their underlying planes.
\end{lemma}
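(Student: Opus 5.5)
The plan is to read everything off the distance formula of \cref{lem:distance}, $d(P,Q)=(\sum_j\theta_j^2)^{1/2}$. Since each principal angle lies in $[0,\pi/2]$, this gives $d(P,Q)\le\frac\pi2\sqrt r$ at once, for every unoriented Grassmannian over $\R$, $\C$ or $\HH$. For the matching lower bound we need two planes whose principal angles are all $\pi/2$. Because $N\ge2r$, we may take $P$ spanned by the first $r$ standard basis vectors and $Q$ spanned by the next $r$. Then $Q\subset P^\perp$, so $T=\pr_P|_Q=0$ and every $s_j=0$, hence every $\theta_j=\pi/2$. Thus $d(P,Q)=\frac\pi2\sqrt r$. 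This settles the complex and quaternionic diameters, and also the claim for the unoriented real Grassmannian.

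For the oriented real Grassmannian, the comparison statement comes first. The metric on $\Gr_r^+(\R^N)$ is the pullback under the covering $\pi$ from \eqref{eq:forgetmap}. Hence $\pi$ is a local isometry, and it preserves the lengths of piecewise smooth curves. If $\gamma$ joins oriented planes $\tilde P$ and $\tilde Q$, then $\pi\circ\gamma$ joins $\pi(\tilde P)$ and $\pi(\tilde Q)$ and has the same length. Taking the infimum over $\gamma$ gives $d(\tilde P,\tilde Q)\ge d(\pi\tilde P,\pi\tilde Q)$.

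For the upper bound $\pi\sqrt r$, take any two oriented planes $\tilde P$ and $\tilde Q$. Lift the explicit unoriented geodesic $P(t)$ from the proof of \cref{lem:distance}, which has length at most $\frac\pi2\sqrt r$. Its lift starts at $\tilde P$ and ends either at $\tilde Q$ or at $\tilde Q$ with the opposite orientation. In the first case we are done. In the second case, follow the lift with a loop that reverses orientation. One such loop rotates a single basis vector $u_1$ of $Q$ through an angle $\pi$ in the plane spanned by $u_1$ and a unit vector $w\perp Q$, which exists because $N>r$. This path ends at the same unoriented plane, and its oriented lift is reached with $u_1\mapsto -u_1$, so the orientation is reversed. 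By \eqref{eq:metric} its length is $\pi$. The total length is then at most $\frac\pi2\sqrt r+\pi\le\pi\sqrt r$ when $r\ge4$.

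The main obstacle is getting a sharper count for small $r$. A cleaner route is to distribute the reversal. In the principal frame, instead of rotating $p_r$ to $q_r$ through $\theta_r$, rotate it to $-q_r$ through $\pi-\theta_r$ in the same plane spanned by $p_r$ and $n_r$. If $\theta_r=0$, use any unit vector orthogonal to $P+Q$ instead; it exists because $N\ge2r$. The resulting curve ends at $Q$ with the opposite induced orientation, and its length squared is $\sum_{j<r}\theta_j^2+(\pi-\theta_r)^2\le(r-1)\frac{\pi^2}{4}+\pi^2$. This is at most $\pi^2r$ for all $r\ge1$. So whichever of the two lifts is needed, some curve of length at most $\pi\sqrt r$ connects the oriented planes, which proves $\diam X_{\R,r,N}\le\pi\sqrt r$.
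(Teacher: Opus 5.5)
Your proof is correct and follows the paper's argument. The unoriented diameters come from the principal-angle formula together with two mutually orthogonal $r$-planes, and the comparison comes from the forgetful map being a length-preserving local isometry. The oriented bound uses a rotating principal frame in which one direction is carried to $-q_j$ through the supplementary angle, reaching the opposite orientation of $Q$.

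The only difference is that you flip $q_r$, the direction with the largest angle, whereas the paper flips $q_1$. This gives the slightly sharper length $\bigl((r-1)\pi^2/4+\pi^2\bigr)^{1/2}$. It also spares you from choosing normal vectors for zero-angle directions, except in the trivial case $P=Q$. Your first concatenation attempt, which only works for $r\ge4$, is superfluous.
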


\begin{proof}
Since every principal angle is at most $\pi/2$, 
\cref{lem:distance} gives the bound $\pi\sqrt r/2$ in each
unoriented case. Since $N\ge2r$, we can choose mutually orthogonal
$r$-planes. All their principal angles equal $\pi/2$, and the bound
is attained. This proves the three unoriented diameter formulas. %The oriented real case follows from  that \eqref{eq:forgetmap} is  a connected two-sheeted covering.

For the oriented real case, we start with principal frames as above.
Simultaneous sign changes of corresponding $p_j,q_j$ allow the
initial $p$-frame to have the prescribed orientation of $P$.
Now, choose normal vectors for all zero-angle directions by orthogonal
completion; this is possible because $N-r\ge r$.
If the principal $q$-frame has the wrong orientation for $Q$,
replace $q_1$ by $-q_1$. The terminal positively oriented frame can
then be written as
\begin{align*}
q_j'=\cos\alpha_j\,p_j+\sin\alpha_j\,n_j,
 \qquad -\pi\le\alpha_j\le\pi.
\end{align*}
Indeed, take $\alpha_j=\theta_j$ for an unchanged vector and
$\alpha_1=\theta_1-\pi$ for a sign-changed one. Rotating by
$t\alpha_j$ produces a continuous oriented frame from $P$ to $Q$.
Its length is $\sqrt{\sum_j\alpha_j^2}\le\pi\sqrt r$.
The forgetful map is a local isometry, so it preserves curve length;
taking infima shows that distance downstairs cannot exceed distance
upstairs.
\end{proof}

\begin{comment}
 
\begin{remark}
The equality in \eqref{eq:distance} concerns unoriented planes.
Two opposite orientations of the same real subspace have all
principal angles zero but are distinct points upstairs. At $r=1$,
$\Gr_1^+(\R^N)$ is the unit sphere, of diameter $\pi$, whereas
$\Gr_1(\R^N)$ is real projective space, of diameter $\pi/2$.
Thus \cref{lem:distance,lem:diameters} include rank one with the
indicated distinction. 
\end{remark}
   
\end{comment}

Fix an orthonormal frame $e_1,\ldots,e_r$ of $P$ and a unit vector
$u\in P^\perp$. A \emph{unit-speed rank-one Grassmannian rotation}
is a curve of the form
\begin{align}
\label{eq:rank-one-rotation}
 P(t)=\operatorname{span}_\F
 \{\cos t\,e_1+\sin t\,u,e_2,\ldots,e_r\},\qquad t\in\R.
\end{align}
The displayed frame specifies orientation in the real oriented case.
The tangent map sends the first vector to
$-\sin t\,e_1+\cos t\,u$ and annihilates the other frame vectors.
Its image has $\F$-dimension one and its squared norm in
\eqref{eq:metric} is one. % When $r=1$, all nonzero tangent maps have rank one; the real rotations are ordinary great circles.
A real {trigonometric polynomial of degree at most $d$} is
a function
\begin{align*}
p(t)=a_0+\sum_{k=1}^{d}(a_k\cos kt+b_k\sin kt),
 \qquad a_k,b_k\in\R.
\end{align*}
\begin{comment}
The degree measures frequency in the parameter $t$, not the dimension
of the Grassmannian. For example, the varying projector in
\eqref{eq:rank-one-rotation} has entries of degree at most two.
A quadratic expression in those entries has degree at most four.    
\end{comment}
Bernstein's inequality states that
$\|p'\|_{L^\infty(\R)}\le d\|p\|_{L^\infty(\R)}$;
see \cite[equation~(1.2), p.~30]{Arestov}.

\begin{lemma}\label{lem:oscillation}
Let $r\ge1$, $N\ge2r$, and let $G\le\Isom(X_{\F,r,N})$ be compact.
Let $F:X_{\F,r,N}\to\R$ be a smooth $G$-invariant function whose
range is exactly $[-1,1]$. Suppose its restriction to every rotation
\eqref{eq:rank-one-rotation} is a trigonometric polynomial of degree
at most an integer $d\ge1$. Then
\begin{align*}
\|\nabla F\|\le d\sqrt r,\qquad
 \diam(X_{\F,r,N}/G)\ge\frac{2}{d\sqrt r}.
\end{align*}
\end{lemma}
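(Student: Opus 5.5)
We first prove the gradient bound. Fix a point $P$ and a tangent vector $A\in\Hom_\F(P,P^\perp)$ as in \eqref{eq:metric}. By the singular value decomposition, choose orthonormal $e_1,\ldots,e_r$ of $P$ and orthonormal $u_1,\ldots,u_r\in P^\perp$ (available since $N\ge2r$) with $Ae_j=u_j\sigma_j$, $\sigma_j\ge0$ real. Then $A=\sum_j A_j$, where $A_j$ sends $e_j\mapsto u_j\sigma_j$ and kills the other $e_i$. Each $A_j/\sigma_j$ (for $\sigma_j>0$) is the initial velocity of a unit-speed rank-one rotation \eqref{eq:rank-one-rotation}, after reordering the frame so $e_j$ comes first. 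Along that curve $F$ restricts to a trigonometric polynomial $p$ of degree $\le d$ with $\|p\|_\infty\le1$, so Bernstein's inequality gives $|dF_P(A_j/\sigma_j)|=|p'(0)|\le d$. Hence
\begin{align*}
|dF_P(A)|\le\sum_j\sigma_j\,|dF_P(A_j/\sigma_j)|\le d\sum_j\sigma_j\le d\sqrt r\Big(\sum_j\sigma_j^2\Big)^{1/2}=d\sqrt r\,|A|,
\end{align*}
by Cauchy--Schwarz. Since $|A|^2=\sum_j\sigma_j^2$ in the metric \eqref{eq:metric}, this yields $\|\nabla F\|\le d\sqrt r$. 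In the oriented real case the same argument works, because the forgetful map is a local isometry and the rotation curves lift with the prescribed frame orientation.

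For the diameter bound, $F$ is Lipschitz with constant $d\sqrt r$: integrate along a minimizing geodesic, or any piecewise smooth curve, and take infima. Choose points $P_\pm$ with $F(P_\pm)=\pm1$, which exist since the range is exactly $[-1,1]$. For any $g\in G$, invariance gives $F(gP_-)=-1$, so
\begin{align*}
2=|F(P_+)-F(gP_-)|\le d\sqrt r\,d(P_+,gP_-).
\end{align*}
Taking the infimum over $g$ gives $d_{X/G}(GP_+,GP_-)\ge 2/(d\sqrt r)$, and therefore the same lower bound for the quotient diameter.

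The only delicate point is the decomposition step. We must check that each $A_j$ really is tangent to a rotation of the exact form \eqref{eq:rank-one-rotation}, including the quaternionic case, where the SVD has to be taken over $\HH$ with real singular values (so that scalars act correctly on right vector spaces). We must also use linearity of $dF_P$ over $\R$ rather than over $\F$. We expect this to be routine given the principal-vector constructions preceding \cref{lem:distance}.
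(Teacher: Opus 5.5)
Your proposal is correct and follows essentially the same route as the paper. You decompose a tangent vector by singular values into rank-one pieces tangent to rotations \eqref{eq:rank-one-rotation}, apply Bernstein's inequality to each piece and Cauchy--Schwarz to get $\|\nabla F\|\le d\sqrt r$, and then combine invariance of $F$ with the resulting Lipschitz bound on $|F(P_+)-F(gP_-)|=2$ to bound the quotient distance. Your remarks on real singular values in the quaternionic case and on frame orientation in the oriented real case are details the paper leaves implicit.
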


\begin{proof}
By the singular-value decomposition,  we know that each unit rank-one tangent is the velocity of a curve
\eqref{eq:rank-one-rotation}. Since $|F|\le1$ along the entire curve,
Bernstein's inequality gives $|\mathrm dF(A)|\le d$ in each such
direction $A$.
Note that an arbitrary unit tangent has a singular-value decomposition
\begin{align*}
A=\sum_{j=1}^k\sigma_j A_j,
 \qquad k\le r,\quad \sigma_j>0,\quad \sum_{j=1}^k\sigma_j^2=1,
\end{align*}
where $A_j$ are unit rank-one tangent maps. Consequently
\begin{align*}
|\mathrm dF(A)|\le d\sum_{j=1}^k\sigma_j
 \le d\sqrt{k}\left(\sum_{j=1}^k\sigma_j^2\right)^{1/2}
 \le d\sqrt r.
\end{align*}
Integration along curves gives the same global Lipschitz bound for
$F$. Since it is invariant, for any $p,q$ and $g\in G$,
\begin{align*}
|F(p)-F(q)|=|F(p)-F(gq)|\le d\sqrt r\,d(p,gq).
\end{align*}
Minimizing over $g$ proves that its descended function on the quotient
is $d\sqrt r$-Lipschitz. Compactness gives points attaining $1$ and
$-1$, whose quotient distance is therefore at least $2/(d\sqrt r)$.
\end{proof}

%\subsection{Groups preserving an ambient subspace}

\begin{lemma}\label{lem:subspace}
Let $0<r<N$, and let $H$ be a compact group of linear isometries of
$\F^N$.
%: a subgroup of $O(N)$, $U(N)$, or $Sp(N)$, respectively.
For $\F=\C$, unitary and antiunitary transformations may both be
allowed. Suppose there is an $\F$-linear vector subspace
$W\subset\F^N$ such that
\begin{align*}
0<\dim_\F W<N,\qquad hW=W\quad\text{for every }h\in H.
\end{align*}
Then
\begin{align*}
\diam(\Gr_r(\F^N)/H)\ge\pi/2.
\end{align*}
If a compact group $G$ acts on $\Gr_r^+(\R^N)$ and its induced
action on underlying planes is the action of such an $H$, then
$\diam(\Gr_r^+(\R^N)/G)\ge\pi/2$ as well.
\end{lemma}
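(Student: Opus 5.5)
The plan is to exhibit two $r$-planes $P,Q$ such that every translate $hQ$, $h\in H$, contains a unit vector orthogonal to $P$. By \cref{lem:distance}, $\cos\theta_r(P,hQ)=\min_{x\in hQ,\,|x|=1}|\pr_Px|=0$, so $\theta_r(P,hQ)=\pi/2$. Hence
\begin{align*}
d(P,hQ)=\Bigl(\sum_j\theta_j^2\Bigr)^{1/2}\ge\theta_r=\pi/2
\end{align*}
for all $h$. Taking the infimum over $h$ gives $d_{\Gr/H}(HP,HQ)\ge\pi/2$, which is the claimed diameter bound.

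First I record the invariance facts needed. Put $k=\dim_\F W$, so $0<k<N$, and let $W^\perp$ be the orthogonal complement with respect to the real scalar product $\Rea(x^*y)$. This complement is an $\F$-subspace of dimension $N-k\ge1$. Every $h\in H$ preserves the real scalar product, and in the complex case this holds for both unitary and antiunitary $h$. Since $hW=W$, it follows that $hW^\perp=W^\perp$. Moreover, each $h$ maps $\F$-subspaces to $\F$-subspaces: an antiunitary map is conjugate-linear, so it sends complex subspaces to complex subspaces. Thus $H$ acts on $\Gr_r(\F^N)$ and preserves both $W$ and $W^\perp$.

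The construction splits into three cases.
\begin{itemize}
\item \emph{Case $r\le k$.} Choose $P\subset W$ and any $Q$ with $Q\cap W^\perp\ne0$. Then $hQ\cap W^\perp=h(Q\cap W^\perp)\ne0$, and every vector of $W^\perp$ is orthogonal to $P$.
\item \emph{Case $r\le N-k$.} Apply the same construction with the roles of $W$ and $W^\perp$ interchanged.
\item \emph{Case $r>k$ and $r>N-k$.} Choose $P\supset W$, which is possible since $k<r$. Then $P^\perp\subset W^\perp$, and $P^\perp\ne0$ because $r<N$. Choose $Q\supset W^\perp$, which is possible since $N-k<r$. Then $hQ\supset hW^\perp=W^\perp\supset P^\perp\ne0$, so $hQ$ contains a unit vector orthogonal to $P$.
\end{itemize}
In every case the first paragraph applies. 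The only point needing care is the third case, where neither $W$ nor $W^\perp$ fits inside an $r$-plane's complement. There the remedy is to let $P$ absorb $W$ and $Q$ absorb $W^\perp$, so that the fixed subspace $W^\perp$ forces a common orthogonal direction.

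For the oriented statement, take lifts $\tilde P,\tilde Q\in\Gr_r^+(\R^N)$ of the planes $P,Q$ constructed above. Each $g\in G$ moves underlying planes by some $h\in H$. By the last assertion of \cref{lem:diameters}, $d(\tilde P,g\tilde Q)$ is at least the distance between the underlying planes $P$ and $hQ$, which is at least $\pi/2$. Minimizing over $g$ gives $\diam(\Gr_r^+(\R^N)/G)\ge\pi/2$.
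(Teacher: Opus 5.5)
Your proof is correct and uses the same idea as the paper. You choose two $r$-planes with extremal intersection dimension with the invariant subspace $W$, so that invariance of $W$ and $W^\perp$ forces, for every $h$, a unit vector in one plane orthogonal to the translate of the other; this gives $\theta_r=\pi/2$ and hence $d\ge\pi/2$ by \cref{lem:distance}. The paper merges your three cases into one by taking the planes $P_\pm=A_\pm\oplus B_\pm$ with $\dim_\F A_\pm=a_\pm$, where $a_-=\max(0,r+w-N)<a_+=\min(r,w)$, and observing that the projection $A_+\to hA_-$ must have a nonzero kernel.
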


\begin{proof}
Put $w=\dim_\F W$. Since the transformations are isometries and
preserve $W$, they also preserve $W^\perp$. An $r$-plane that splits
as $A\oplus B$ with $A\subset W$, $B\subset W^\perp$ can have
$\dim_\F A=a$ exactly when
\begin{align*}
a_-:=\max(0,r+w-N)\le a\le\min(r,w)=:a_+.
\end{align*}
These inequalities imply that  $0\le a\le w$ and
$0\le r-a\le N-w$. Moreover $a_+>a_-$. Indeed,  if $r+w\le N$, then
$a_-=0<a_+$; otherwise $r+w-N$ is strictly smaller than both $r$
and $w$, since $r<N$ and $w<N$.

Choose subspaces $A_\pm\subset W$ of dimensions $a_\pm$ and
$B_\pm\subset W^\perp$ of dimensions $r-a_\pm$, and put
$P_\pm=A_\pm\oplus B_\pm$. For each $h\in H$, the projection
map from $A_+$ to $hA_-$ has rank at most $a_-$ and therefore has
a nonzero kernel. Choose a unit vector $x_h$ in this kernel.
It is perpendicular to $hA_-$; it is also perpendicular to $hB_-$,
since $x_h\in W$ and $hB_-\subset W^\perp$. Hence
\begin{align*}
x_h\in P_+,\qquad \pr_{hP_-}x_h=0.
\end{align*}
The largest principal angle between $P_+$ and $hP_-$ is $\pi/2$.
By \cref{lem:distance}, $d(P_+,hP_-)\ge\pi/2$ for every $h$.
Taking the minimum over the group proves the quotient bound.
This argument only uses preservation of dimensions, orthogonality,
and subspaces, so it also permits antiunitary transformations.

For oriented planes, choose either orientation of $P_+$ and $P_-$.
Every translated underlying pair has the same lower bound just
proved. The orientation-forgetting map is a local isometry and therefore
does not increase distances. Applying this to each pair upstairs and
taking the infimum over $G$ finishes the proof.
\end{proof}

\begin{remark}
The group in \cref{lem:subspace} acts on the ambient vector space
$\F^N$, and $W$ is a proper nonzero vector subspace of that space.
An arbitrary intrinsic Grassmannian isometry is used here only after
the lifting reduction of \cref{sec:reduction}. Invariance of $W$
under the identity component alone is insufficient: every component
must preserve $W$. 
\end{remark}

\section{The case of finite acting groups}\label{sec:finite}
In this section, we prove \cref{thm:main} with the assumption  that  $G$ is finite.
\begin{theorem}\label{thm:finite}
For every positive integer $r$, there exists $c_{\mathrm{fin},r}>0$
such that every finite isometry group $G$ of every 
simply connected irreducible compact symmetric space $M$ of rank $r$
satisfies
\begin{align*}
\Delta(M,G)\ge c_{\mathrm{fin},r}.
\end{align*}
In particular, the assertion includes $r=1$.
\end{theorem}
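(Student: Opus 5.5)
We first reduce to the Grassmannian families, and then prove a uniform lower bound there using Green's matrix-coefficient estimate.

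\textbf{Step 1: reduction.} For fixed $r$, the spaces of rank $r$ outside the families \eqref{eq:families} fall into finitely many homothety classes. The same holds for Grassmannians with $N\le N_0(r)$, for any fixed cutoff $N_0(r)$. All these spaces are simply connected, so \cref{thm:fixed-space} applies to each of them and gives a positive constant. Since $\Delta$ is scale invariant, the minimum of these finitely many constants works for all of them. It therefore suffices to treat $X_{\F,r,N}$ with $N$ large, say $N>N_0(r)$ to be chosen. By the lifting discussion of \cref{sec:reduction}, we may replace $G$ by the finite group $\widehat G$, which acts on $V=\F^N\cong\R^{mN}$ ($m=1,2,4$) by real orthogonal transformations. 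In the real case, oriented distance dominates unoriented distance (\cref{lem:diameters}), so it is enough to bound the unoriented quotient. The diameter of $X_{\F,r,N}$ is at most $\pi\sqrt r$, so we only need a lower bound of the form $c_r>0$ on $\diam(\Gr_r(\F^N)/\widehat G)$.

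\textbf{Step 2: the finite-group estimate.} We use Green's theorem \cite{Green} on finite orthogonal groups acting on $\R^n$ with $n$ large: for every $\delta>0$ there are unit vectors $x,y$ with $|\ip{x}{gy}|\le\delta$ for all $g$ in the group, once $n\ge n(\delta)$. This is the matrix-coefficient form behind $\diam(S^{n-1}/G)\to\pi/2$. We apply it on $V=\R^{mN}$ to obtain unit vectors $x,y$ with $|\ip{x}{gy}|_\R\le\delta$ for all $g\in\widehat G$. In the complex and quaternionic cases we need control of the full $\F$-valued product. For this we apply Green's estimate to the $m$ real vectors $y\lambda$, with $\lambda$ ranging over the units $1,i,j,k$ (or $1,i$). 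Since $\widehat G$ is $\F$-linear or antilinear, this is equivalent to applying it to the finite group $\widehat G\cdot\{\text{units}\}$. That group has bounded index over $\widehat G$, so a version of the estimate for finitely many test vectors, or for the enlarged group, gives $|\ip{x}{gy}_\F|\le 2\delta$.

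\textbf{Step 3: passing to planes.} Take $P\ni x$ and $Q\ni y$ as arbitrary $r$-planes. Then \cref{lem:distance} gives $\cos\theta_r(P,gQ)\le |\pr_P (g y)|$. This quantity involves all of $P$, not just $x$, which is the main obstacle. The cleaner route is to take $P=\operatorname{span}_\F(x)\oplus P'$ and $Q=\operatorname{span}_\F(y)\oplus Q'$ and to bound only one principal angle. Alternatively, and more simply, we use the case $r=1$ only: the largest principal angle between $P$ and $gQ$ is at least the angle between the line $\operatorname{span}(gy)$ and $P$. We then need $|\pr_P gy|$ small for all $g$, that is, $\sum_i|\ip{p_i}{gy}|^2\le\delta'$ for an orthonormal basis $p_i$ of $P$. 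To arrange this we apply Green's estimate iteratively, choosing $p_1,\dots,p_r$ orthonormal with $|\ip{p_i}{gy}|\le\delta$ for all $g$. The difficulty is that Green's statement produces a single pair, while we need $r$ orthonormal vectors. One approach is to pick $p_1$ and $y$ from Green, then restrict to the orthogonal complement of the finitely many vectors $\widehat G\,p_1$ … but $|\widehat G|$ is unbounded, so this fails. Instead we use the averaged (Hilbert–Schmidt) form of Green's estimate, $\sup_g\|\pr_P g\,\pr_{\ell}\|$ small, or the observation that the function $g\mapsto|\pr_P gy|^2$ has mean $r/N$ over the whole orthogonal group. We expect Green's method, which shows that some pair beats the average, to extend to $r$-frames with $\delta$ depending on $r$. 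Granting this, $\theta_r(P,gQ)\ge\arccos\sqrt{r\delta^2}\ge\pi/4$ for all $g$. Then \cref{lem:distance} gives $\diam\ge\pi/4$, hence $\Delta\ge 1/(4\sqrt r)$ for $N>N_0(r)$, and we combine this with Step 1.
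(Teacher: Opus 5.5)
Your reduction (Step~1) and the first half of Step~3 are sound. Since $\cos\theta_r(P,gQ)\le|\pr_P(gy)|$ for any unit $y\in Q$, it suffices to find a single unit vector $y$ with $\max_{g}|\pr_P(gy)|$ small. You should also note that a finite group is never transitive here, so \cref{thm:fixed-space} gives a genuine lower bound for the finitely many remaining classes, not just a dichotomy.

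\textbf{The gap.} The gap is the step you flag and then grant. The single-pair form of Green's theorem you quote gives unit $x,y$ with $|\ip{x}{gy}|\le\delta$ for all $g$, but no control over which $x$ occurs. So it cannot make one $y$ simultaneously almost orthogonal to the orbits of $r$ orthonormal vectors. Your iterative choice of $p_1,\dots,p_r$ fails for the reason you observe. The average $r/N$ over the full orthogonal group says nothing about a maximum over a finite subgroup. ``We expect Green's method to extend to $r$-frames'' is exactly what remains to be proved, so the argument is incomplete as written.

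\textbf{The missing ingredient.} The paper uses the common-measure form of Green's estimate \cite[Definition~1.6 and Theorem~1.8]{Green}. For a finite $H\subset U(m)$ there is one probability measure $\mu$ on the complex unit sphere, depending on $H$ but not on $v$, such that $\int\max_{h\in H}|\ip{hv}{z}|^2\,d\mu(z)\le 1/(1+a\log m)$ for every unit $v$.

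With this, the plane needs no adaptation at all:
\begin{itemize}
\item Take an arbitrary real $s$-plane $E$ with orthonormal basis $e_1,\dots,e_s$, sum the estimate over $j$, and choose $z$ below the average.
\item Let $x$ be the normalized real or imaginary part of $z$, whichever has squared norm at least $1/2$. Then $\max_h|\pr_{hE}x|^2\le 2\sum_j\max_h|\ip{he_j}{z}|^2\le 2s/(1+a\log m)$.
\item Any $\F$-plane $Q\ni x$ then satisfies $\cos\theta_r(hE,Q)\le|\pr_{hE}x|$.
\end{itemize}

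This also makes your Step~2 detour through $\F$-valued products and the enlarged group unnecessary. With $\kappa=\dim_\R\F$, an $\F$-linear $r$-plane is a real $\kappa r$-plane in $\R^{\kappa N}$. Its Hermitian projection is the real orthogonal projection, so the real estimate with $s=\kappa r$ and $m=\kappa N$ applies directly. It yields $\diam(X_{\F,r,N}/\widehat G)\ge\pi/3$, hence $\Delta\ge1/(3\sqrt r)$, once $N$ is large in terms of $r$.
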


\begin{proof}
We first obtain a quantitative separation estimate for a finite
orthogonal group. The common-measure formulation of Green's theorem
\cite[Definition~1.6 and Theorem~1.8]{Green} provides an absolute
constant $a>0$ with the following property. For every integer $m\ge1$
and every finite subgroup $H\subset U(m)$, there is a probability measure
$\mu$ on the complex unit sphere such that
\begin{align}
\label{eq:green}
 \int \max_{h\in H}|\ip{hv}{z}|^2\,d\mu(z)
 \le\frac1{1+a\log m}
\end{align}
for every unit vector $v\in\C^m$.
The measure may depend on $H$, but it is the same measure for every
$v$. This quantifier is essential when separating an entire plane.
%All locators refer to the corrected arXiv version~3.

Let $H\subset O(m)$ be a finite group and let $E\subset\R^m$ be a real
$s$-plane, with orthonormal basis $e_1,\ldots,e_s$. Consider  each
real matrix $h$ as a unitary matrix on $\C^m$. Applying
\eqref{eq:green} to each $e_j$ and adding the inequalities gives
\begin{align*}
\int\sum_{j=1}^s\max_{h\in H}|\ip{he_j}{z}|^2\,d\mu(z)
 \le\frac{s}{1+a\log m}.
\end{align*}
The integrand is continuous on the compact sphere, so some unit
$z=u+iv$, with $u,v\in\R^m$, satisfies the same upper bound
without the integral. Since $|u|^2+|v|^2=1$, one of these real
vectors has squared norm at least $1/2$. Denote it by $w$ and put
$x=w/|w|$. For every $h\in H$,
\begin{align*}
|\pr_{hE}x|^2
 &=\frac1{|w|^2}\sum_{j=1}^s|\ip{he_j}{w}|^2\\
 &\le2\sum_{j=1}^s|\ip{he_j}{z}|^2
 \le2\sum_{j=1}^s\max_{k\in H}|\ip{ke_j}{z}|^2
 \le\frac{2s}{1+a\log m}.
\end{align*}
The first inequality uses that each real or imaginary part of
$\ip{he_j}{z}$ has modulus at most $|\ip{he_j}{z}|$.
We have obtained one real unit vector $x$ satisfying
\begin{align}
\label{eq:finite-separate}
 \max_{h\in H}|\pr_{hE}x|^2\le\frac{2s}{1+a\log m}.
\end{align}

Now fix an $\F$-linear $r$-plane $E\subset\F^N$ and a finite
ambient group $H$ acting on its Grassmannian. With
$\kappa=\dim_\R\F\in\{1,2,4\}$, the underlying real
representation has dimension $m=\kappa N$ and the real dimension
of $E$ is $s=\kappa r$. Every ambient element is real orthogonal,
including the antiunitary elements allowed in the complex case.
An $\F$-linear subspace and its Hermitian orthogonal complement
are also orthogonal complements for the real scalar product.
Thus the real projection in \eqref{eq:finite-separate} is exactly
the Hermitian projection needed in \cref{lem:distance}.

Extend $x$ to an $\F$-orthonormal $r$-frame and let $Q$ be its
span. Since $x\in Q$ and $|x|=1$, for every $h\in H$,
\begin{align*}
\cos\theta_r(hE,Q)
 =\min_{\substack{y\in Q\\|y|=1}}|\pr_{hE}y|
 \le|\pr_{hE}x|
 \le\min\left\{1,\sqrt{\frac{2\kappa r}{1+a\log(\kappa N)}}\right\}.
\end{align*}
The function $\arccos$ is decreasing and distance is at least the
largest principal angle. Hence
\begin{align}
\label{eq:finite-angle}
 \diam(X_{\F,r,N}/H)\ge
 \arccos\min\left\{1,
 \sqrt{\frac{2\kappa r}{1+a\log(\kappa N)}}\right\}.
\end{align}
In the real oriented case, choose orientations on $E,Q$ and use the
distance comparison in \cref{lem:diameters}; the same bound holds.

Choose an integer $N_0(r)>\max\{8,2r\}$ so large that
$1+a\log N_0(r)\ge32r$. If $N\ge N_0(r)$, then
\begin{align*}
1+a\log(\kappa N)\ge32r\ge8\kappa r.
\end{align*}
The square root in \eqref{eq:finite-angle} is at most $1/2$, so
the absolute quotient diameter is at least $\pi/3$, uniformly in
$H$ and $\F$. Dividing by the common upper bound
$\pi\sqrt r$ from \cref{lem:diameters} gives
\begin{align*}
\Delta(X_{\F,r,N},H)\ge\frac1{3\sqrt r}.
\end{align*}

For a finite subgroup $G$ of the full isometry group, use the
finite-kernel covers of \cref{sec:reduction}. Its full inverse
image is finite. In the complex and quaternionic cases it has
exactly the same Grassmannian orbits; in the real case its image in
$O(N)$ describes the underlying-plane orbits, and the orientation
comparison still applies. Thus the preceding estimate holds for
every finite intrinsic isometry group in this parameter range.

At fixed $r$, the remaining spaces form finitely many homothety
classes, including all Grassmannians with $N<N_0(r)$. Choose
representatives $M_1,\ldots,M_t$. By \cref{thm:fixed-space}, each
has a number $\delta_j>0$ bounding below every positive quotient
diameter. A finite group cannot act transitively on a
positive-dimensional connected manifold. Therefore all the finite-group
quotients here have positive diameter. The positive number
\begin{align*}
c_{\mathrm{fin},r}=
 \min\left\{\frac1{3\sqrt r},
 \frac{\delta_1}{\diam M_1},\ldots,
 \frac{\delta_t}{\diam M_t}\right\}
\end{align*}
proves the theorem.
\end{proof}

\section{The real, complex and quaternionic  Grassmannians}\label{sec:real-complex}

In this section, we prove \cref{thm:tail} with the  assumption that  $\dim G\geq 1$.

The real and complex arguments use the following result.
If $H$ acts by symplectomorphisms
on a symplectic manifold $(M,\omega)$, the fundamental vector field associated to
$X\in\h$ is $X^{\#}_p=\left.\frac{d}{dt}\right|_{t=0}\exp(tX)p$.
After identifying $\h^*$ with $\h$ by an invariant scalar product,
an equivariant \emph{moment map} is a smooth map $\mu:M\to\h$
satisfying
\begin{align*}
\mu(hp)=\Ad(h)\mu(p),\qquad
 d\ip{\mu}{X}=\iota_{X^{\#}}\omega\quad(X\in\h).
\end{align*}
Here $(\iota_{X^{\#}}\omega)(Y)=\omega(X^{\#},Y)$.
An action admitting such a map is called \emph{Hamiltonian}.

\begin{theorem}[{\cite[Theorem~1]{GP}}]
\label{thm:GP}
Let a compact connected Lie group $H$ act in a Hamiltonian fashion
by holomorphic isometries on a compact K\"ahler manifold $M$.
Choose an $\Ad(H)$-invariant positive scalar product on its Lie
algebra $\h$, and use it to identify an equivariant moment map with
a map $\mu:M\to\h$. If $\|\mu\|^2$ is constant, then $M$ is
$H$-equivariantly biholomorphic and isometric to a product of a
homogeneous flag manifold and a compact K\"ahler manifold on which
$H$ acts trivially.
\end{theorem}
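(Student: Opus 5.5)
The plan is to exploit the fact that constancy of $f=\|\mu\|^2$ makes \emph{every} point critical. First I show that each $H$-orbit is a complex homogeneous flag manifold. Then I show that the orbit foliation splits off isometrically and holomorphically. I treat this as a reproof of \cite[Theorem~1]{GP}; in the paper one would simply cite it.

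\textbf{Pointwise step.} Fix $p\in M$ and put $\beta=\mu(p)$. For $Y\in T_pM$ we have
\begin{align*}
0=df_p(Y)=2\ip{d\mu_p(Y)}{\beta}=2\,\omega(\beta^\#_p,Y).
\end{align*}
By nondegeneracy of $\omega$, this gives $\beta^\#_p=0$, so $\beta$ lies in the stabilizer algebra $\h_p$. Equivariance of $\mu$ gives the reverse containment $\h_p\subset\mathfrak z(\beta)$.

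To prove equality $\h_p=\mathfrak z(\beta)$, I would use the vanishing of the Hessian of $f$ at $p$. The standard formula reads
\begin{align*}
\tfrac12\operatorname{Hess}f_p(v,v)=|d\mu_p(v)|^2+\ip{\beta}{d^2\mu_p(v,v)},
\end{align*}
and the second term equals $\omega(v,\nabla_v\beta^\#)=-\ip{J\,\mathcal L_\beta v}{v}$ up to sign, where $\mathcal L_\beta$ is the linearized (skew) action of $\beta$ on $T_pM$. Apply this to $v=J\xi^\#_p$ with $\xi\in\mathfrak z(\beta)$. The gradient $J\xi^\#$ of $\ip{\mu}{\xi}$ is then an eigendirection of a nonnegative operator. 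Combining with $\operatorname{Hess}f=0$, this should force $|d\mu_p(J\xi^\#_p)|=|\xi^\#_p|^2/|\xi|$-type quantities to vanish, that is, $\xi^\#_p=0$. Hence $H\cdot p\cong H/Z_H(\beta)$, and $\mu|_{H p}$ is an equivariant diffeomorphism onto the adjoint orbit $\Ad(H)\beta$.

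\textbf{Complexity of orbits.} On the orbit, $d\mu$ identifies $T_p(Hp)=\h\cdot p$ with $[\h,\beta]$. The same Hessian identity shows that $J(\h\cdot p)\subset\h\cdot p$, since $J\xi^\#$ is a gradient tangent to the level sets of $f$. So every orbit is a compact complex homogeneous submanifold biholomorphic to a flag manifold $H/Z_H(\beta)$. Its induced metric is determined by $\mu$, so it is the same for all orbits of a given type. Since $\|\beta\|$ is constant and the centralizer types of elements of a fixed norm in a Weyl chamber vary upper-semicontinuously, the orbit type is locally constant. By connectedness it is globally constant, so $M\to M/H$ is a fiber bundle with flag fiber $F$.

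\textbf{Global splitting.} This is the step I expect to be hardest. Let $\mathcal V$ be the orbit distribution and $\mathcal H=\mathcal V^\perp$. Both are $J$-invariant, and the foliation is Riemannian because $H$ acts by isometries. I would show that the Kähler form satisfies $\omega=\mu^*\omega_{KKS}+\omega_{\mathcal H}$, with $\omega_{\mathcal H}$ closed. Together with $\nabla J=0$, this should give total geodesy of the fibers, and then integrability and total geodesy of $\mathcal H$, so both distributions are parallel. By de Rham, this gives a local isometric holomorphic product. To globalize, use that $F$ is simply connected and that the bundle's structure group reduces to the $H$-equivariant automorphisms of $F$, namely the discrete group $N_H(Z)/Z$ acting on the base. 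The map $p\mapsto(\mu(p),[p])$ then gives the desired equivariant product $F\times (M/H)$, with $H$ acting trivially on the second factor. What I would try first is to show that the holonomy of $\mathcal H$ commutes with the $H$-action and therefore acts on $F$ by $H$-equivariant isometries. Connectedness of the horizontal leaves then forces this holonomy to be trivial.
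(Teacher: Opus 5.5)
The paper gives no proof of this statement. It is quoted from \cite[Theorem~1]{GP}, so your sketch has to stand on its own, and as written it does not.

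Your first step is fine once phrased correctly. For $\xi\in\mathfrak z(\beta)$, the vector $J\xi^\#_p$ lies in the \emph{kernel} of the linearized action of $\beta$ at $p$: that action kills $\xi^\#_p$ because $[\beta,\xi]=0$, and it commutes with $J$. So the second Hessian term vanishes, $d\mu_p(J\xi^\#_p)=0$, and pairing with $\xi$ gives $\omega(\xi^\#_p,J\xi^\#_p)=\pm|\xi^\#_p|^2=0$. No nonnegativity is involved. You also need connectedness of $Z_H(\beta)$ to pass from $\h_p=\mathfrak z(\beta)$ to $H_p=Z_H(\beta)$.

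The genuine gaps come after that step.
(i) Your reason for $J(\h\cdot p)\subset\h\cdot p$ is empty. Since $f$ is constant, every vector is tangent to its level set, and nothing you wrote puts $J\xi^\#_p$ into $T_p(Hp)$. Complexity of the orbits is the heart of the theorem.
(ii) Constancy of $\|\mu\|$ plus upper semicontinuity of centralizer type does not make the orbit type locally constant. For nonabelian $\h$, a sphere in $\h$ contains regular and singular elements joined by paths, and semicontinuity permits exactly such jumps.
(iii) The splitting step is only a programme (closedness of $\omega_{\mathcal H}$, total geodesy, de~Rham, holonomy). The claim that ``connectedness of the horizontal leaves forces the holonomy to be trivial'' is not an argument, since a connected $N_H(Z)/Z$-covering can have nontrivial monodromy.

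The missing idea is to differentiate the identically vanishing field $V(q)=\mu(q)^\#_q$, not just $f$. With $\beta=\mu(p)$ and $A=\nabla\beta^\#|_p$,
\begin{align*}
0=\nabla_vV=(d\mu_pv)^\#_p+Av\qquad(v\in T_pM),
\end{align*}
so $\operatorname{im}A\subset T_p(Hp)$. On $T_p(Hp)$ one has $A\xi^\#_p=[\xi^\#,\beta^\#]_p=\pm[\xi,\beta]^\#_p$. This is injective, because $\h_p=\mathfrak z(\beta)$ and $[\xi,\beta]\perp\mathfrak z(\beta)$. Hence $\operatorname{im}A=T_p(Hp)$. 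This space is $J$-invariant, because $\beta^\#$ is a holomorphic Killing field, so $A$ commutes with $J$. Polarizing your Hessian identity gives the same operator identity; you only evaluated it on special vectors.

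Complexity then gives $\ip{d\mu_pv}{\eta}=\omega(\eta^\#_p,v)=0$ for $v\perp T_p(Hp)$, because $J\eta^\#_p\in T_p(Hp)$. So $d\mu$ is everywhere tangent to adjoint orbits, every $\Ad$-invariant polynomial of $\mu$ is constant, and $\mu(M)$ is a single orbit $\Ad(H)\beta$. This replaces (ii).

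For (iii), $Z=Z_H(\beta)$ is the stabilizer of every point of $F=\mu^{-1}(\beta)$, so it fixes $F$ pointwise. Hence $q\mapsto(\mu(q),h^{-1}q)$, where $\mu(q)=\Ad(h)\beta$, is a well-defined $H$-equivariant diffeomorphism $M\to\Ad(H)\beta\times F$. Its two foliations are orthogonal and $J$-invariant, since $T_qF=\ker d\mu_q=T_q(Hq)^\perp$.
\begin{itemize}
\item On the orbit factor, $\omega(\xi^\#_q,\eta^\#_q)=\ip{[\eta,\mu(q)]}{\xi}$ depends only on $\mu(q)$. The complex structure $J$ is forced by $Z$-invariance and positivity of $\omega(\cdot,J\cdot)$: it acts as $\pm$ the rotation on each root plane, with the sign determined by $\beta$.
\item On the $F$ factor, $H$-invariance makes the structure independent of the orbit coordinate.
\end{itemize}
This gives the K\"ahler product directly, without any de~Rham or holonomy argument.
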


In particular, a nontrivial, nontransitive Hamiltonian action on a
Riemannian irreducible compact K\"ahler manifold has nonconstant
moment-map norm. Indeed, nontriviality makes the flag factor
positive-dimensional. Irreducibility then makes the other factor a
point, so a constant norm would force transitivity. The theorem
applies without a rank restriction; in particular, it applies to
complex projective spaces. A related result in the symplectic
setting is proved in \cite[Theorem~1.1]{Biliotti}.

\begin{lemma}\label{lem:real-subsets}
Let $2\le r\le N-2$, and let $b_{ij}=b_{ji}$ be real numbers for
distinct $i,j\in\{1,\ldots,N\}$. Suppose that
\begin{align*}
\sum_{\substack{i<j; ~i,j\in S}}b_{ij}
\end{align*}
has the same value for every $r$-element subset $S$ of
$\{1,\ldots,N\}$. Then all $b_{ij}$ are equal.
\end{lemma}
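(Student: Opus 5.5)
Let $c$ denote the common value of the sums, and write $\sigma(S)=\sum_{i<j;\,i,j\in S}b_{ij}$. The plan is to compare $r$-element subsets that differ by a single swap, and to use the room $N-r\ge2$ to pin each $b_{ij}$ down. The first step is a one-swap identity. Fix an $(r-1)$-element set $T$ and two indices $k,l\notin T$. Equating $\sigma(T\cup\{k\})$ and $\sigma(T\cup\{l\})$ gives
\begin{align*}
\sum_{t\in T}b_{kt}=\sum_{t\in T}b_{lt}.
\end{align*}
Hence, for each fixed $T$ of size $r-1$, the function $k\mapsto\sum_{t\in T}b_{kt}$ is constant on the complement of $T$. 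This complement has $N-r+1\ge3$ elements.

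The second step passes from sets of size $r-1$ to single entries by varying one element of $T$. Take distinct indices $k,l,t,t'$ and an $(r-2)$-element set $U$ disjoint from them. Such a $U$ exists because $N\ge r+2$ means $N-4\ge r-2$. Apply the identity to $T=U\cup\{t\}$ and to $T'=U\cup\{t'\}$, both with the pair $k,l$, and subtract. The sums over $U$ cancel and we get
\begin{align*}
b_{kt}-b_{lt}=b_{kt'}-b_{lt'}.
\end{align*}
So the difference $b_{kt}-b_{lt}$ does not depend on $t\notin\{k,l\}$. Call it $D_{kl}$. When $r=2$, $U$ is empty, and the identity for $T=\{t\}$ directly gives $b_{kt}=b_{lt}$, so that case is already done. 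I expect this cancellation step to be the main point: it is exactly where the hypothesis $r\le N-2$ is needed.

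The third step shows every $D_{kl}$ vanishes. Choose $t$ and $u$ distinct from $k$ and $l$, which is possible since $N\ge4$. We have
\begin{align*}
D_{kl}=b_{kt}-b_{lt},\qquad D_{tu}=b_{tk}-b_{uk}.
\end{align*}
The cleanest route is a direct symmetry argument. For distinct $k,l,t$, by definition $b_{kt}=b_{lt}+D_{kl}$. Swapping roles gives $b_{kl}=b_{tl}+D_{kt}$ and $b_{lk}=b_{tk}+D_{lt}$. Subtracting the last two and using symmetry of $b$,
\begin{align*}
0=b_{tl}-b_{tk}+D_{kt}-D_{lt}=-D_{kl}+D_{kt}-D_{lt}.
\end{align*}
So $D_{kl}=D_{kt}-D_{lt}$, which is only a consistency relation and does not settle the sign.

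To finish, evaluate $D_{kl}$ using a third point $t$ and a fourth point $u$:
\begin{align*}
D_{kl}=b_{kt}-b_{lt}=b_{ku}-b_{lu}.
\end{align*}
Also $D_{tu}=b_{tk}-b_{uk}=b_{tl}-b_{ul}$. By symmetry of $b$, the first equation rearranges to $b_{kt}-b_{ku}=b_{lt}-b_{lu}$, which is the statement that $D_{tu}$ evaluated at $k$ equals $D_{tu}$ evaluated at $l$. This gives no new information either.

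Instead, use the constant-complement form of step one directly. Take $T$ with $k\in T$ and $l,m\notin T$. Step one gives $\sum_{t\in T}b_{lt}=\sum_{t\in T}b_{mt}$. Writing each term through $D$, we have $b_{lt}-b_{mt}=D_{lm}$ for every $t\in T$, because each such $t$ differs from $l$ and $m$. Summing over $T$ yields $(r-1)D_{lm}=0$. Since $r\ge2$, this forces $D_{lm}=0$. Therefore $b_{lt}=b_{mt}$ for all distinct $l,m,t$.

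It remains to conclude from this that all $b$ are equal. Any two pairs sharing an element have equal values. Any two disjoint pairs $\{i,j\}$ and $\{k,l\}$ are joined through the pair $\{j,k\}$, which shares an element with each. Hence all $b_{ij}$ are equal.
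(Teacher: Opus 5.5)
Your proof is correct and is essentially the paper's argument. Both compare $T\cup\{k\}$ with $T\cup\{l\}$, use an $(r-2)$-set $U$ to show $b_{kt}-b_{lt}$ is independent of $t$, sum over an $(r-1)$-set to get $(r-1)D_{lm}=0$, and then chain pairs through shared indices. The two abandoned detours in your third step (the consistency relations among the $D$'s) are correct but unused and can be deleted, and the requirement $k\in T$ in the final argument plays no role.
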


\begin{proof}
Fix distinct indices $i,j$ and put $d_k=b_{ik}-b_{jk}$ for
$k\notin\{i,j\}$. Comparing the sums for $T\cup\{i\}$ and
$T\cup\{j\}$ gives
\begin{align}
\label{eq:real-subset-difference}
 \sum_{k\in T}d_k=0
\end{align}
for every $(r-1)$-element subset $T$ avoiding $i,j$.

If $k,\ell$ are two distinct remaining indices, choose an
$(r-2)$-element set $U$ avoiding $i,j,k,\ell$. This is possible
because $0\le r-2\le N-4$; for $r=2$, take $U=\varnothing$.
Apply \eqref{eq:real-subset-difference} to $U\cup\{k\}$ and
$U\cup\{\ell\}$ and subtract. It follows that $d_k=d_\ell$.
Thus all $d_k$ have a common value $d$. Equation
\eqref{eq:real-subset-difference} now reads $(r-1)d=0$, so $d=0$.

Consequently $b_{ik}=b_{jk}$ whenever $i,j,k$ are distinct. All
edges incident to a fixed index therefore have the same value;
an edge joining two indices identifies their common values. This
proves equality for every pair.
\end{proof}

\begin{remark}\label{rem:real-subsets-rank-one}
The restriction $r\ge2$ is essential. At $r=1$ every sum in the
hypothesis is empty and equals zero, independently of the numbers
$b_{ij}$. Thus the hypothesis imposes no condition. For example,
$b_{12}=1$ and all other $b_{ij}=0$ provide a counterexample when
$N\ge3$. The real-family argument below will retain $r\ge2$.
\end{remark}

\subsection{The  real  Grassmannians}

Let  $\gfr=\son(N)$ and define the inner product
\begin{align}
\label{eq:real-trace}
 \ip{X}{Y}_{\gfr}=-\frac12\tr(XY)
 \qquad (X,Y\in\son(N)).
\end{align}
We use this invariant inner product to identify $\gfr$ with
$\gfr^*$, $X\mapsto X^{*}:=\ip{X}{\cdot}_{\gfr}.$

For  a two-plane $P\in \Gr_2^+(\R^N)$,  we can  choose a positively oriented orthonormal basis $x,y$ of $P$; so   there is  a natural   inclusion  $\Gr_2^+(\R^N)\rightarrow \Lambda^2\R^N$ by $P\mapsto x\wedge y.$ On the other hand,     $\Lambda^2\R^N$   is canonically identified with $\son(N)$ by linearly  extending  $x\wedge y\mapsto yx^T-xy^T.$ The orthogonal group $SO(N)$ acts naturally and equivalently  on these spaces. 
 In particular, 
\begin{align*}
\Gr_2^+(\R^N)=SO(N)/(SO(2)\times SO(N-2))
\end{align*}
is   an adjoint orbit of a unit simple bivector $x\wedge y$. For  a two-plane $P\in \Gr_2^+(\R^N)$,   choose a positively oriented orthonormal basis $x,y$ of $P$ and define  the rotation of this plane by 
$J_Px=y$ and $J_Py=-x.$  Let $U,V\in\Hom_\R(P,P^\perp)$ be  two arbitrary  tangent vectors at $P$, and define
\begin{align*}
\omega_P(U,V):=g_P(-UJ_P,V).
\end{align*}
To verify that this is a K\"ahler structure, identify the oriented
Grassmannian with the complex quadric by
\begin{align*}
\Psi(P)=[x+iy]\in\mathcal Q^{N-2}
 :=\{[z]\in\C P^{N-1}:z^Tz=0\}.
\end{align*}
This map is independent of the positively oriented orthonormal basis
$x,y$. At the unit representative $z=(x+iy)/\sqrt2$, its differential
has horizontal lift
\begin{align*}
\mathrm d\Psi_P(U)=\frac{Ux+iUy}{\sqrt2},\qquad
 \mathrm d\Psi_P(-UJ_P)=i\,\mathrm d\Psi_P(U).
\end{align*}
For the Fubini--Study metric of holomorphic sectional curvature $4$,
$\Psi^*g_{\mathrm{FS}}=g/2$. Thus $U\mapsto-UJ_P$ is integrable and
$\omega=2\Psi^*\omega_{\mathrm{FS}}$ is its K\"ahler form.
Put $Z_P=yx^T-xy^T\in\son(N)$.
\begin{comment}

In particular, the space
\begin{align*}
\Gr_2^+(\R^N)=SO(N)/(SO(2)\times SO(N-2))
\end{align*}
is   an adjoint orbit of a unit simple bivector $x\wedge y$. 
    
\end{comment}

\begin{lemma}
 Define $\mu: \Gr_2^+(\R^N)\rightarrow\son(N)$ by $\mu(P)=Z_P$. Then $\mu$ is an equivariant moment map for $SO(N)$. For any Lie
subgroup $H\le SO(N)$ with Lie algebra $\h$, its orthogonal projection
to $\h$ is an equivariant moment map for $H$.
\end{lemma}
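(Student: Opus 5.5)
The plan is to verify the two defining properties of a moment map directly from the formula $\mu(P)=Z_P=yx^T-xy^T$, and then to deduce the statement for subgroups formally. For this I will use the identification of $\gfr$ with $\gfr^*$ given by \eqref{eq:real-trace}, together with the tangent space description \eqref{eq:metric} and the complex structure $U\mapsto -UJ_P$ set up just before the lemma.

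\emph{Equivariance.} First, check that $Z_P$ is well defined. A change of positively oriented orthonormal basis of $P$ is a rotation in $SO(2)$, and $x\wedge y$ is unchanged by it, so $Z_P$ is unchanged. For $g\in SO(N)$, the plane $gP$ has positively oriented basis $gx,gy$. Hence $Z_{gP}=g(yx^T-xy^T)g^T=\Ad(g)Z_P$.

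\emph{The moment map identity.} Fix $X\in\son(N)$ and put $f_X(P)=\ip{\mu(P)}{X}_{\gfr}$. Using the skew-symmetry of $X$,
\begin{align*}
f_X(P)=-\tfrac12\tr\bigl((yx^T-xy^T)X\bigr)=\ip{Xx}{y}.
\end{align*}
Next I need the fundamental vector field. Under \eqref{eq:metric}, the curve $\exp(tX)P$ has velocity $X^{\#}_P=\pr_{P^\perp}\circ X|_P\in\Hom_\R(P,P^\perp)$. To differentiate $f_X$ along a tangent vector $U\in\Hom_\R(P,P^\perp)$, I use the horizontal frame curve $x+tUx,\ y+tUy$ to first order; this is the same device as the horizontal frames in the proof of \cref{lem:distance}. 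This gives
\begin{align*}
\mathrm df_X(U)=\ip{XUx}{y}+\ip{Xx}{Uy}=-\ip{Ux}{Xy}+\ip{Xx}{Uy}.
\end{align*}
The vectors $Ux$ and $Uy$ lie in $P^\perp$, so $Xx$ and $Xy$ may be replaced by their $P^\perp$-components. The result is $\mathrm df_X(U)=\ip{X^{\#}x}{Uy}-\ip{X^{\#}y}{Ux}$.

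On the other side, use $J_Px=y$ and $J_Py=-x$ and evaluate the trace on the basis $x,y$:
\begin{align*}
\omega_P(X^{\#},U)=-\sum_{e\in\{x,y\}}\ip{X^{\#}J_Pe}{Ue}=-\ip{X^{\#}y}{Ux}+\ip{X^{\#}x}{Uy}.
\end{align*}
The two expressions agree, so $\mathrm d\ip{\mu}{X}=\iota_{X^{\#}}\omega$. The main place an error could occur is in the sign conventions: the orientation of $Z_P$, the sign $-UJ_P$ in $\omega$, and the sign of the fundamental field. I would double-check them together in the single model case $N=3$, where $\Gr_2^+(\R^3)=S^2$.

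\emph{Subgroups.} Let $H\le SO(N)$ have Lie algebra $\h$, and let $\pr_\h$ denote orthogonal projection with respect to the $\Ad$-invariant product \eqref{eq:real-trace}. For $h\in H$, the map $\Ad(h)$ preserves both $\h$ and $\h^\perp$, so it commutes with $\pr_\h$. Therefore $\pr_\h\circ\mu$ is $H$-equivariant. For $X\in\h$ we have $\ip{\pr_\h\mu}{X}=\ip{\mu}{X}$. The fundamental fields of $H$ are the restrictions of those of $SO(N)$, so the moment map identity holds for $H$ as well.
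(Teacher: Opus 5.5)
Your proof is correct and follows the same plan as the paper: you verify equivariance directly, you verify $\mathrm d\ip{\mu}{X}_{\gfr}=\iota_{X^{\#}}\omega$, and you pass to $H$ by the projection argument. The difference is only in how the derivative is computed. The paper writes tangent vectors as $Y^{\#}_P$ and uses $\Ad$-invariance to reduce the derivative to $\ip{Z_P}{[X,Y]}_{\gfr}$; it then asserts that this equals $\omega_P(X^{\#}_P,Y^{\#}_P)$ without checking that the $2\times2$ skew $P$--$P$ blocks of $X$ and $Y$ contribute nothing. You instead differentiate $\ip{Xx}{y}$ along a first-order orthonormal frame through an arbitrary $U$, which makes that sign check fully explicit.
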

\begin{proof}
 To prove that it is a moment map, we must verify,  $d\langle\mu,X\rangle_{\mathfrak g}
=\omega(X^\#,\cdot)$ for every $X\in\son(N)$. Note that  every tangent vector to $\Gr_2^+(\R^N)$ at $P$ has the form $Y^\#_P$ for $Y\in \son(N)$. So
\begin{align*}
d\langle\mu,X\rangle_P(Y^\#_P)
=\left.\frac{d}{dt}\right|_{t=0} \langle\mu(\exp{tY}\cdot P),X\rangle_{\mathfrak g}=
\left.\frac{d}{dt}\right|_{t=0} \langle Z_{\exp{tY}\cdot P},X\rangle_{\mathfrak g}=
\langle[Y,Z_P],X\rangle_{\mathfrak g}.
\end{align*}
Relative to the decomposition $\R^N=P\oplus P^\perp$, we can write an arbitrary $X\in\son(N)$ as
\begin{align*}
X=\begin{pmatrix}X_{11}&-U^T\\ U&X_{22}\end{pmatrix}.
\end{align*}
Its infinitesimal action on the Grassmannian is the tangent map
$X^\#_P=U$.
The trace scalar product satisfies
\begin{align*}
\begin{aligned}
\langle[Y,Z_P],X\rangle_{\mathfrak g}
=\langle Z_P,[X,Y]\rangle_{\mathfrak g}
=\omega_{P}(X^\#_P,Y^\#_P).
\end{aligned}
\end{align*}
Since the vectors $Y^\#_P$ span the tangent space, this proves the moment-map identity. Equivariance is immediate:
\begin{align*}
\mu(aP)=Z_{aP}=aZ_Pa^{-1}=a\mu(P)a^{-1}.
\end{align*}
For the subgroup $H$, set $\mu_H=\operatorname{pr}_{\mathfrak h}\mu$. If $X\in\mathfrak h$, then
\begin{align*}
\langle\mu_H,X\rangle_{\mathfrak h}
=\langle\mu,X\rangle_{\mathfrak g},
\end{align*}
so the same differential identity proves that $\mu_H$ is a moment map for the restricted action.   
\end{proof}
 
\begin{lemma}
\label{lem:real-quadric}
Let $N>8$, let $H\le SO(N)$ be compact and connected, and suppose
that its Lie algebra satisfies $0\ne\h\ne\son(N)$.  
Define the norm square of the moment for $H$ action on  $\Gr_2^+(\R^N)$ as follows
\begin{align*}
\psi_\h(x\wedge y)=\|\pr_\h(x\wedge y)\|_{\gfr}^2,
 \qquad |x|=|y|=1,\quad x\perp y,
\end{align*}
where   $\{x,y\}$ is a positively oriented orthonormal base of $P\in \Gr_2^+(\R^N)$. Then $\psi_\h$ is nonconstant.
\end{lemma}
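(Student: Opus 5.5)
The plan is to argue by contradiction, using \cref{thm:GP} to turn constancy of $\psi_\h$ into transitivity of $H$ on the quadric, and then to rule out transitivity by the classification of transitive actions on real Grassmannians. So suppose that $\psi_\h$ is constant.

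\emph{Step 1: set up \cref{thm:GP}.} By the preceding lemma, $\mu_H=\pr_\h\circ\mu$ is an equivariant moment map for the action of $H$ on $\Gr_2^+(\R^N)$, with the K\"ahler form $\omega=2\Psi^*\omega_{\mathrm{FS}}$. The squared norm $\|\mu_H\|^2$ is exactly $\psi_\h$, computed with the $\Ad$-invariant restriction of \eqref{eq:real-trace} to $\h$. The group $H$ is compact and connected and acts by holomorphic isometries, because it acts through $SO(N)$ on the quadric $\mathcal Q^{N-2}$.

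\emph{Step 2: constancy forces transitivity.} The action of $H$ is nontrivial. Indeed, the kernel of the $SO(N)$-action on oriented $2$-planes is contained in $\{\pm I\}$, which is discrete, while $\h\ne0$. The quadric $\mathcal Q^{N-2}$ with $N>8$ is an irreducible Hermitian symmetric space, so it is irreducible as a K\"ahler manifold. By the remark following \cref{thm:GP}, the trivial-action factor must be a point, and therefore $H$ acts transitively on $\Gr_2^+(\R^N)=SO(N)/(SO(2)\times SO(N-2))$.

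\emph{Step 3: rule out a proper transitive subgroup.} I would invoke Onishchik's classification of compact connected groups acting transitively on compact homogeneous spaces (inclusions of compact Lie algebras). For the oriented $2$-plane Grassmannian, equivalently the quadric, the only proper connected subgroups of $SO(N)$ that act transitively are $G_2\subset SO(7)$ and $\operatorname{Spin}(7)\subset SO(8)$. Both are excluded by $N>8$. Hence $\h=\son(N)$, contradicting the hypothesis.

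This last step is the main obstacle, since it rests on an external classification. As a self-contained fallback I would try a linear-algebra route. Constancy of $\psi_\h$ means that the quadratic form $\omega\mapsto\|\pr_\h\omega\|^2-c\|\omega\|^2$ on $\Lambda^2\R^N$ vanishes on the cone of decomposable bivectors. That cone is cut out by the Plücker quadrics $\omega\wedge\omega=0$, so the form equals $\ip{\varphi}{\omega\wedge\omega}$ for some $4$-form $\varphi$. Then $\pr_\h=c\,I+A_\varphi$, where $A_\varphi$ is the symmetric traceless operator on $\Lambda^2$ attached to $\varphi$. Since $\pr_\h$ is a projection, $A_\varphi$ has only the two eigenvalues $-c$ and $1-c$. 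One would then show that, for $N>8$, no nonzero $4$-form has this two-eigenvalue property, which is the Cayley-form phenomenon that only occurs for $N=8$. Hence $\varphi=0$, so $\pr_\h$ is a scalar multiple of the identity, which forces $\h\in\{0,\son(N)\}$. This alternative is more delicate, so in the write-up I would present the classification route.
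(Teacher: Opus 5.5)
Your proposal is correct and follows the paper's proof. You argue by contradiction through \cref{thm:GP}: the action is nontrivial and the quadric is Riemannian irreducible for $N>8$, so constancy forces transitivity. You then use a classification of transitive compact connected subgroups whose only proper cases are $G_2\subset SO(7)$ and $\operatorname{Spin}(7)\subset SO(8)$. The only difference is the reference: the paper cites Wolf's transitivity classification (his equations (1.2)--(1.5)) rather than Onishchik, and your linear-algebra fallback is not needed.
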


\begin{proof}
The $H$-action is nontrivial, since  the $SO(N)$-action on $\Gr_2^+(\R^N)$ has only
a finite kernel, and $\h\ne0$. The space $\Gr_2^+(\R^N)$ is Riemannian
irreducible for $N>8$. If $\psi_\h$ were constant,
\cref{thm:GP} would make $H$ transitive on $\Gr_2^+(\R^N)$.
We now apply the compact transitivity classification in
\cite[p.~2484, equations~(1.2)--(1.5)]{WolfFlags}.
Its hypotheses hold for this presentation: modulo its finite
kernel, $SO(N)$ is the full connected group of Hermitian isometries
of the quadric; its Lie algebra is simple; and
$SO(2)\times SO(N-2)$ is the centralizer of the circle rotating
the distinguished two-plane. The image of $H$ is a compact
connected subgroup. In that classification the proper transitive
presentations of quadrics occur in ambient dimensions $7$ and $8$,
apart from smaller-dimensional identifications. The latter
eight-dimensional case is the triality form of (1.3), as explained
on p.~2485 of the same source. Thus, for $N>8$, transitivity forces
$\h=\son(N)$, contradicting the hypothesis.
\end{proof}

\begin{lemma}
\label{lem:real-energy}
Let $N>8$ and $2\le r\le N-2$. Let $H\subset SO(N)$ be a compact and
connected subgroup with $0\ne\h\ne\son(N)$, and choose a basis
$A_1,\ldots,A_s$ of $\h$ orthonormal for
\eqref{eq:real-trace}. For a real rank-$r$ orthogonal projector
$P$, put
\begin{align}
\label{eq:real-energy}
 q_r(P)=\sum_{\nu=1}^s\|PA_\nu P\|_{\mathrm{Frob}}^2
 =-\sum_{\nu=1}^s\tr(PA_\nu PA_\nu).
\end{align}
Then $q_r$ is nonconstant. It is independent of the chosen
orthonormal basis and invariant under every ambient orthogonal
transformation normalizing $\h$. Its pullback to oriented planes
is also invariant under orientation reversal.
\end{lemma}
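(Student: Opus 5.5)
The plan is to reduce nonconstancy of $q_r$ to nonconstancy of the moment-map norm $\psi_\h$ on $\Gr_2^+(\R^N)$, using \cref{lem:real-quadric} and the combinatorial \cref{lem:real-subsets}. The invariance claims are routine, so I would dispose of them first.

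\emph{Invariance.} Write $\Pi_\h$ for the orthogonal projection of $\son(N)$ onto $\h$ with respect to \eqref{eq:real-trace}. Then $q_r(P)=-\sum_\nu\tr(PA_\nu PA_\nu)$ equals $2\,\tr(\Pi_\h\circ C_P)$, where $C_P(X)=PXP$ is a symmetric operator on $\son(N)$. Hence $q_r$ is independent of the orthonormal basis. If $g\in O(N)$ normalizes $\h$, then $\{g^{-1}A_\nu g\}$ is another orthonormal basis of $\h$, because conjugation preserves the trace form. This gives
\begin{align*}
q_r(gPg^{-1})=-\sum_\nu\tr\bigl(P(g^{-1}A_\nu g)P(g^{-1}A_\nu g)\bigr)=q_r(P).
\end{align*}
Finally, $q_r$ depends only on the projector, so its pullback to oriented planes does not see orientation.

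\emph{Nonconstancy.} Suppose, for contradiction, that $q_r$ is constant. Fix any orthonormal basis $e_1,\ldots,e_N$ of $\R^N$, and for an $r$-subset $S$ let $P_S$ be the coordinate projector. Each $A_\nu$ is skew, so its diagonal entries vanish, and
\begin{align*}
q_r(P_S)=\sum_\nu\sum_{i,j\in S}(A_\nu)_{ij}^2=\sum_{i<j;\,i,j\in S}b_{ij},\qquad b_{ij}:=2\sum_\nu(A_\nu)_{ij}^2 .
\end{align*}
Next, $\ip{A_\nu}{e_j e_i^T-e_ie_j^T}_\gfr=\pm(A_\nu)_{ij}$. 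Hence
\begin{align*}
\psi_\h(e_i\wedge e_j)=\sum_\nu\ip{A_\nu}{Z_{e_i\wedge e_j}}_\gfr^2=\tfrac12 b_{ij}.
\end{align*}
Since $2\le r\le N-2$, \cref{lem:real-subsets} applies and shows that all $b_{ij}$ coincide. So $\psi_\h$ takes the same value on all coordinate $2$-planes of any fixed orthonormal basis, with either orientation, since $\psi_\h$ is quadratic.

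\emph{Passing between bases.} To compare two arbitrary oriented planes $P,P'$, use that $\dim(P+P')\le4<N-1$ when $N>8$. Choose a $2$-plane $R\perp P+P'$. Complete $P\oplus R$ to an orthonormal basis; this gives $\psi_\h(P)=\psi_\h(R)$. Doing the same with $P'\oplus R$ gives $\psi_\h(P')=\psi_\h(R)$. Therefore $\psi_\h$ is constant on $\Gr_2^+(\R^N)$, contradicting \cref{lem:real-quadric}.

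The only delicate point is the bookkeeping identifying $b_{ij}$ with $2\psi_\h(e_i\wedge e_j)$, namely the signs and the factor from $-\tfrac12\tr$. Since only proportionality with a positive constant matters, I expect this to go through.
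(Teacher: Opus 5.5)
Your proposal is correct and follows the paper's proof: routine invariance, coordinate projectors in an arbitrary orthonormal basis, \cref{lem:real-subsets} to equalize the $b_{ij}$, the identification of $\sum_\nu(A_\nu)_{ij}^2$ with $\psi_\h(e_i\wedge e_j)$, and a contradiction with \cref{lem:real-quadric}. The one difference is how the common value is made basis-independent. The paper computes it directly: with its normalization $b_{ij}=\sum_\nu\ip{A_\nu e_i}{e_j}^2$ one has $\sum_{i<j}b_{ij}=s$, so the value is $2s/[N(N-1)]$. You instead compare any two oriented planes through a common perpendicular $2$-plane, the same device the paper uses for $f$ in \cref{lem:quaternionic-bridge}, and this step is equally valid.
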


\begin{proof}
The equality in \eqref{eq:real-energy} follows from $P^2=P$, $P^T=P,$
$A_\nu^T=-A_\nu$, and cyclicity of trace. Note that  orthogonal changes of
the basis of $\h$ preserve the sum of squared norms. If an
orthogonal transformation normalizes $\h$, then its conjugation action
preserves \eqref{eq:real-trace} and carries an orthonormal basis
of $\h$ to another one. This proves the stated invariance.
Orientation reversal leaves the projector unchanged.

Suppose that $q_r$ is constant, and fix an arbitrary 
orthonormal basis $e_1,\ldots,e_N$ of $\R^N$. Set
\begin{align*}
b_{ij}=\sum_{\nu=1}^s\ip{A_\nu e_i}{e_j}^2=b_{ji}
 \qquad (i\ne j).
\end{align*}
Since  $A_v$ is  skew-symmetric,   every diagonal entry  is zero. Thus, on the
coordinate $r$-plane $P_S$,
\begin{align*}
q_r(P_S)=2\sum_{\substack{i<j;~i,j\in S}}b_{ij}.
\end{align*}
By \cref{lem:real-subsets}, all $b_{ij}$ are equal. Their total
sum is
\begin{align*}
\sum_{i<j}b_{ij}
 =\sum_{\nu=1}^s\ip{A_\nu}{A_\nu}_{\gfr}=s,
\end{align*}
so their common value is $2s/[N(N-1)]$, which is independent of the
orthonormal basis. Every orthonormal pair extends to such a basis;
consequently
\begin{align}
\label{eq:real-pair-energy}
 \sum_{\nu=1}^s\ip{A_\nu x}{y}^2
 =\frac{2s}{N(N-1)}
 \quad (|x|=|y|=1,\ x\perp y).
\end{align}
Since
$\ip{A_\nu}{x\wedge y}_{\gfr}=\ip{A_\nu x}{y}$ by \eqref{eq:real-trace},
the left side  of \eqref{eq:real-pair-energy} is precisely $\psi_\h(x\wedge y)$.
This  contradicts
\cref{lem:real-quadric}.
\end{proof}

\begin{remark}\label{rem:real-energy-rank-one}
This argument treats even and odd $r$ in the same way. The
K\"ahler flag manifold is always the auxiliary two-plane quadric;
no K\"ahler structure on the original real $r$-plane Grassmannian
is assumed. At $r=1$, the compression $PA_\nu P$ is a real
skew-symmetric endomorphism of a line and hence is zero. Therefore
$q_1\equiv0$ for every $\h$, and the nonconstancy lemma has no
rank-one extension.
\end{remark}

\begin{theorem}\label{thm:real-tail}
Let $r\ge2$ and $N>\max\{8,2r\}$. If
$G\subset \Isom(X_{\R,r,N})$ is a compact and nontransitive subgroup, with
positive-dimensional identity component, then
\begin{align*}
\diam(X_{\R,r,N}/G)\ge\frac1{\sqrt r},
 \qquad \Delta(X_{\R,r,N},G)\ge\frac1{\pi r}.
\end{align*}
\end{theorem}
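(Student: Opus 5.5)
Let $\widehat G\subset O(N)\times\langle\omega\rangle$ be the full preimage of $G$ from \cref{sec:reduction}, and let $\bar G\subset O(N)$ be its projection. Since $\ker\rho$ is finite, $G$ and $\widehat G$ have the same identity component, which lies in $SO(N)$. Put $H=(\bar G)^0\subset SO(N)$, with Lie algebra $\h\neq 0$.

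We split into two cases according to $\h$. Suppose first that $\h=\son(N)$. Then $H=SO(N)$, and $SO(N)$ is transitive on $\Gr_r^+(\R^N)$. Since $G$ contains this identity component, $G$ is transitive, contradicting the hypothesis. Hence $0\ne\h\ne\son(N)$, and \cref{lem:real-energy} applies because $N>8$ and $2\le r\le N/2\le N-2$. The invariant we use is $q_r$. Every element of $\bar G$ normalizes $\h$, because $H$ is normal in $\bar G$. The $\omega$-component leaves projectors unchanged. By the invariance statement of \cref{lem:real-energy}, the pullback of $q_r$ to oriented planes is therefore $G$-invariant and nonconstant. Let $m<M$ be its minimum and maximum, and set
\begin{align*}
F=\frac{2q_r-(M+m)}{M-m},
\end{align*}
so that $F$ is smooth, $G$-invariant, and has range exactly $[-1,1]$.

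Next we bound the trigonometric degree of $F$ along rotations. Along a rotation \eqref{eq:rank-one-rotation}, the projector $P(t)$ equals $\sum_{j\ge2}e_je_j^T+v(t)v(t)^T$ with $v(t)=\cos t\,e_1+\sin t\,u$. Its entries are therefore trigonometric polynomials of degree at most $2$, since they are built from $\cos^2t$, $\sin^2 t$ and $\cos t\sin t$. The function $q_r(P)=-\sum_\nu\tr(PA_\nu PA_\nu)$ is quadratic in $P$, so its restriction has degree at most $4$. Applying \cref{lem:oscillation} with $d=4$ gives only $\diam\ge 1/(2\sqrt r)$, which is off by a factor $2$ from the claim. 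So the key step is to show that the degree is actually $2$.

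To get degree $2$, write $P(t)=P_0+v v^T$ with $P_0$ fixed. Then
\begin{align*}
q_r=-\sum_\nu\Bigl[\tr(P_0A_\nu P_0A_\nu)+2\,v^TA_\nu P_0A_\nu v+(v^TA_\nu v)^2\Bigr].
\end{align*}
The last term vanishes because $A_\nu$ is skew. The middle term is a quadratic form in $v$, hence has degree at most $2$ in $t$. Thus $d=2$, and \cref{lem:oscillation} gives $\diam(X_{\R,r,N}/G)\ge 2/(2\sqrt r)=1/\sqrt r$. Dividing by $\diam X_{\R,r,N}\le\pi\sqrt r$ from \cref{lem:diameters} gives $\Delta\ge 1/(\pi r)$.

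The main obstacle is the careful bookkeeping in the invariance step. We must check that the $G$-invariance of $F$ uses all components of $G$, via the normalizer property and the passage through $\rho$, and not just $G^0$.
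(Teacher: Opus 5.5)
Your proposal is correct and follows essentially the same route as the paper. Both arguments lift to $O(N)\times\langle\omega\rangle$, exclude $\h=\son(N)$ by transitivity, and take $q_r$ from \cref{lem:real-energy}, which is invariant under the whole normalizing group and under orientation reversal. Both then apply \cref{lem:oscillation} with $d=2$ and divide by $\diam X_{\R,r,N}\le\pi\sqrt r$. Your degree-two computation via $P(t)=P_0+vv^T$, where $(v^TA_\nu v)^2=0$, is the projector form of the paper's moving-frame identity $q_r=2\sum_\nu\sum_{a<b}\ip{A_\nu e_a(t)}{e_b(t)}^2$: in both, skew-symmetry leaves only pairings linear in $(\cos t,\sin t)$.
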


\begin{proof}
Use the finite-kernel ambient lift from \cref{sec:reduction}, and
let $H\subset SO(N)$ be the identity component of that lift. Its
Lie algebra $\h$ is nonzero. If $\h=\son(N)$, then $H$ would
be transitive on the oriented $r$-plane Grassmannian, making $G$
transitive as well. Hence $\h$ is proper, and
\cref{lem:real-energy} supplies a nonconstant $q_r$.
The entire lifted group normalizes $\h$. Together with the
orientation-reversal clause of that lemma, this makes $q_r$
invariant under all of $G$.

We verify its degree using an orthonormal moving frame. Along a
rank-one rotation take
\begin{align*}
e_1(t)=\cos(t)e_1+\sin(t)u,\qquad
 e_j(t)=e_j\ (2\le j\le r),\qquad u\perp P,
\end{align*}
where $u$ is unit. Then
\begin{align*}
q_r(P(t))=2\sum_{\nu=1}^s\sum_{1\le a<b\le r}
          \ip{A_\nu e_a(t)}{e_b(t)}^2.
\end{align*}
The terms with $a,b\ge2$ are constant; every other pairing is
linear in $\cos t,\sin t$. Thus the trigonometric degree is at
most two for every $r\ge2$. Affinely normalize the range to
$[-1,1]$. By \cref{lem:oscillation}, the quotient diameter is at
least $1/\sqrt r$. The real diameter bound in
\eqref{eq:diameters} gives the normalized estimate.
\end{proof}

\begin{remark}
The restriction $r\ge2$ in \cref{thm:real-tail} is retained.
The invariant vanishes at $r=1$, and the proposed normalized
constant $1/\pi$ is false for the real rank-one family, as shown
by \cref{ex:rank-one-tail}. The existence of a rank-one sphere
gap remains covered by \cite{GLLM}.
\end{remark}

\subsection{The complex Grassmannians}

We set $\gfr=\sun(N)$ and use
\begin{align}
\label{eq:complex-trace}
 \ip{X}{Y}_{\gfr}=-\Rea\tr(XY)
\end{align}
for any $X,Y\in\sun(N).$ In what follows,  we abuse the notation $P$ both for an $r$-dimensional plane and for its orthogonal projection.
\begin{lemma} \label{lem:moment}
Let $1\le r<N$, and equip $\Gr_r(\C^N)$ with
the metric \eqref{eq:metric}. Define
\begin{align*}
J_r(P)=i\left(P-\frac rN I\right).
\end{align*}
With K\"ahler form $\omega(U,V)=g(iU,V)$,
the map $\mu=-J_r/2$ is a moment map for $SU(N)$.   
\end{lemma}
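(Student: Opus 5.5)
Before computing anything I would fix the sign and normalization conventions, since the content of the lemma is exactly that the constant $-1/2$ is correct.

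\emph{Tangent vectors and the Kähler form.} Write $X\in\sun(N)$ in block form relative to $\C^N=P\oplus P^\perp$:
\begin{align*}
X=\begin{pmatrix}X_{11}&-U^*\\ U&X_{22}\end{pmatrix}.
\end{align*}
As in the real case, the fundamental vector field at $P$ is $X^\#_P=U\in\Hom_\C(P,P^\perp)$. Indeed, $\exp(tX)$ moves $P$ to the graph of $tU+O(t^2)$. The Kähler form is $\omega(U,V)=g(iU,V)=\Rea\tr((iU)^*V)=-\Rea\tr(iU^*V)=\Rea\tr(U^*(-i)V)$.

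\emph{Derivative of the pairing.} Take the projector path $P(t)=\exp(tY)P\exp(-tY)$. Its derivative at $t=0$ is $[Y,P]$. Hence
\begin{align*}
d\ip{\mu}{X}_P(Y^\#_P)=\ip{-\tfrac i2[Y,P]}{X}_{\gfr}=\tfrac12\Rea\tr(i[Y,P]X).
\end{align*}
The scalar term $\frac rN I$ drops out, since its derivative vanishes; it is present only so that $J_r(P)$ lies in $\sun(N)$ (it is traceless and skew-Hermitian).

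\emph{Reduction to off-diagonal blocks.} By cyclicity, $\Rea\tr(i[Y,P]X)=\Rea\tr(iP[X,Y])$. This only sees the $PP$-block of $[X,Y]$. Write $Y$ with off-diagonal block $V$, i.e. $Y^\#_P=V$. The $PP$-block of $XY-YX$ is
\begin{align*}
X_{11}Y_{11}-U^*V-(Y_{11}X_{11}-V^*U).
\end{align*}
The commutator $[X_{11},Y_{11}]$ has zero trace, so
\begin{align*}
\Rea\tr(iP[X,Y])=\Rea\tr\bigl(i(V^*U-U^*V)\bigr).
\end{align*}
Using $\Rea\tr(iV^*U)=\Rea\overline{\tr(iV^*U)}=\Rea\tr(-iU^*V)$, this equals $-2\Rea\tr(iU^*V)=2\omega(U,V)$. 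Multiplying by $\tfrac12$ gives exactly $\omega(X^\#_P,Y^\#_P)=(\iota_{X^\#}\omega)(Y^\#)$.

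\emph{Conclusion.} The vectors $Y^\#_P$ span $T_P\Gr_r(\C^N)$, since $U$ ranges over all of $\Hom_\C(P,P^\perp)$. So the moment-map identity holds. Equivariance is immediate from $aPa^{-1}$ being the projector onto $aP$, which gives $\mu(aP)=a\mu(P)a^{-1}=\Ad(a)\mu(P)$.

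\emph{Main obstacle.} The only delicate point is the sign and factor bookkeeping: the factor $\tfrac12$ coming from the two conjugate terms, and the sign of $\omega$ relative to the trace form $-\Rea\tr$. I would double-check this against the rank-one case $r=1$, $N=2$, using the standard Fubini--Study moment map. I would also note that a sign error there would only replace $\mu$ by $-\mu$, which does not affect $\|\mu\|^2$ in later use. The Kähler property of $\omega$, namely that $U\mapsto iU$ is the standard integrable complex structure and the metric is a multiple of the invariant one, I would take as standard.
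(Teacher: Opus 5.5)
Your proposal is correct and is essentially the paper's argument. Both identify $X^\#_P$ with the lower off-diagonal block of $X$, differentiate $\ip{\mu}{X}_{\gfr}$ along a tangent direction, reduce to a trace of off-diagonal blocks equal to $\omega_P(X^\#_P,\cdot)$, and get equivariance from $\mu(aP)=a\mu(P)a^{-1}$. The only cosmetic difference is that you first rewrite $\Rea\tr(i[Y,P]X)$ as $\Rea\tr(iP[X,Y])$, as the paper does in its real two-plane lemma, whereas the paper's proof of this lemma multiplies the off-diagonal projector derivative against $X$ directly; your signs and the factor $\tfrac12$ check out.
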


\begin{proof}
The matrix $J_r(P)$ is skew-Hermitian and traceless, so it lies
in $\sun(N)$. At a plane $P$, decompose
$\C^N=P\oplus P^\perp$. A tangent map
$U\in\Hom_\C(P,P^\perp)$  corresponds, in the projector representation, to

\begin{align*}
\dot P_U=\begin{pmatrix}0&U^*\\ U&0\end{pmatrix}.
\end{align*}
Write an arbitrary $X\in\sun(N)$ as
\begin{align*}
X=\begin{pmatrix}X_{11}&-Z^*\\ Z&X_{22}\end{pmatrix}.
\end{align*}
Its infinitesimal action on the Grassmannian is the tangent map
$X^\#_P=Z$. Since
$\omega_P(Z,U)=\operatorname{Im}\tr(Z^*U)$, direct differentiation
gives
\begin{align*}
\mathrm d\ip{\mu}{X}_{\gfr}(U)
 =\frac12\Rea\tr(i\dot P_U X)
 =\frac12\Rea\bigl(i\tr(U^*Z-Z^*U)\bigr)
 =\operatorname{Im}\tr(Z^*U)
 =\omega_P(X^\#_P,U).
\end{align*}
Thus $\mu$ is a moment map. Equivariance follows from
$J_r(aPa^{-1})=aJ_r(P)a^{-1}$ for $a\in SU(N)$.  
\end{proof}

\begin{lemma} \label{lem:complex-moment}
If a compact connected subgroup $H\subset SU(N)$ acts nontrivially
and nontransitively, then
\begin{align}
\label{eq:complex-energy}
 q_\h(P)=\|\pr_\h J_r(P)\|_{\gfr}^2,
 \qquad \h=\Lie(H),
\end{align}
is nonconstant. It is invariant under every unitary or antiunitary
ambient transformation whose conjugation action normalizes $\h$.
These conclusions include $r=1$.
\end{lemma}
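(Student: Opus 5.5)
By \cref{lem:moment}, $\mu=-J_r/2$ is an $SU(N)$-equivariant moment map on $\Gr_r(\C^N)$. The plan is to apply \cref{thm:GP} to $H$ with the projected map $\mu_H=\pr_\h\mu$; its squared norm is $q_\h/4$, so if $q_\h$ were constant we would get a contradiction with transitivity.

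First, $\mu_H$ is an $H$-equivariant moment map. For $X\in\h$ we have $\ip{\mu_H}{X}_{\gfr}=\ip{\mu}{X}_{\gfr}$, so the differential identity is inherited. Equivariance holds because $\Ad(H)$ preserves $\h$ and the trace form, hence commutes with orthogonal projection onto $\h$.

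Next, $H$ acts by holomorphic isometries, since $SU(N)$ does in the metric \eqref{eq:metric} with complex structure $U\mapsto iU$. The manifold $\Gr_r(\C^N)$ is a compact, Riemannian irreducible K\"ahler manifold (an irreducible Hermitian symmetric space, type AIII), including $\C P^{N-1}$ when $r=1$. Suppose $q_\h$ is constant. Then \cref{thm:GP}, together with the remark following it, splits $\Gr_r(\C^N)$ $H$-equivariantly as a flag manifold times a compact K\"ahler manifold on which $H$ acts trivially. Irreducibility forces one factor to be a point. If the flag factor were a point, $H$ would act trivially, contradicting the hypothesis. So the trivial factor is a point and $H$ is transitive, again a contradiction. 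Hence $q_\h$ is nonconstant. No rank assumption enters, so this covers $r=1$. The point to check here is that a nontrivial $H\subset SU(N)$ genuinely gives a nontrivial action on the Grassmannian. This holds because the kernel of the $SU(N)$-action is the finite scalar subgroup and $H$ is connected: a connected subgroup of a finite group is trivial.

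For the invariance, let $a$ be unitary or antiunitary with $a\h a^{-1}=\h$. Conjugation $c_a:X\mapsto aXa^{-1}$ preserves $\sun(N)$ and the scalar product \eqref{eq:complex-trace}. In the antiunitary case, writing $a=b\tau$, this map is $X\mapsto b\bar Xb^{-1}$ and $\Rea\tr$ is preserved under conjugation. Since $c_a$ is an orthogonal map preserving $\h$, it commutes with $\pr_\h$. Now compute $J_r(aP)$. The projector onto $aP$ is $aPa^{-1}$, and when $a$ is antiunitary, $a\,i=-i\,a$. Thus $J_r(aP)=\pm\,c_a(J_r(P))$, with the sign $-1$ exactly in the antiunitary case. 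Therefore
\begin{align*}
q_\h(aP)=\|\pr_\h(\pm c_aJ_r(P))\|_{\gfr}^2=\|c_a\pr_\h J_r(P)\|_{\gfr}^2=q_\h(P).
\end{align*}

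I expect the main obstacle to be the bookkeeping in the antiunitary case. One has to check that the sign flip of $i$ is absorbed by the squared norm, and that $c_a$, though antilinear on $\C^N$, is a real-linear isometry of $\sun(N)$ for \eqref{eq:complex-trace}. The remaining steps are direct applications of \cref{lem:moment} and \cref{thm:GP}.
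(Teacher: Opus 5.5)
Your proposal is correct and follows the paper's proof essentially step for step. You project the $SU(N)$ moment map $\mu=-J_r/2$ to $\h$ and invoke \cref{thm:GP} together with Riemannian irreducibility of $\Gr_r(\C^N)$ to rule out constant $q_\h$; for invariance, you treat conjugation by a unitary or antiunitary $a$ as an orthogonal automorphism of $(\sun(N),\ip{\cdot}{\cdot}_{\gfr})$ commuting with $\pr_\h$, with the antiunitary sign in $J_r(aPa^{-1})=-c_a(J_r(P))$ removed by squaring. Your extra check that nontriviality follows from the finite scalar kernel is not needed here, since nontriviality is a hypothesis, but it is exactly what the paper uses when it applies this lemma in \cref{thm:complex-tail}.
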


\begin{proof}

The map $\mu_H=\pr_\h\mu$ is an equivariant moment map for $H$. The complex Grassmannian is compact, K\"ahler, and
Riemannian irreducible for every $1\le r<N$. If $q_\h$ were
constant, the identity $J_r=-2\mu$ would make
$\|\pr_\h\mu\|^2$ constant. The consequence of
\cref{thm:GP} stated above would then force transitivity,
contrary to the hypothesis.
For a unitary transformation $a$, conjugation on $\gfr$
preserves \eqref{eq:complex-trace}. If it normalizes $\h$,
orthogonal projection to $\h$ commutes with that conjugation,
proving invariance. If $a$ is antiunitary, let
$\alpha_a(X)=aXa^{-1}$ denote its real-linear conjugation action
on $\sun(N)$. It is again orthogonal for
\eqref{eq:complex-trace}, but $aia^{-1}=-i$ gives
\begin{align*}
J_r(aPa^{-1})=-\alpha_a(J_r(P)).
\end{align*}
If $\alpha_a(\h)=\h$, projection commutes with $\alpha_a$ and
the sign disappears after taking the squared norm. This proves
the full invariance claim. 
\end{proof}

\begin{theorem}\label{thm:complex-tail}
Let $r\ge1$ and $N>\max\{8,2r\}$. If
$G\subset\Isom(X_{\C,r,N})$ is a compact and nontransitive subgroup, with
positive-dimensional identity component, then
\begin{align*}
\diam(X_{\C,r,N}/G)\ge\frac1{2\sqrt r},
 \qquad \Delta(X_{\C,r,N},G)\ge\frac1{\pi r}.
\end{align*}
In particular the normalized estimate is $1/\pi$ when $r=1$.
\end{theorem}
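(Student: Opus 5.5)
The plan follows the real case (\cref{thm:real-tail}), with \cref{lem:complex-moment} playing the role of \cref{lem:real-energy}. First we lift: by \cref{sec:reduction}, the full preimage $\widehat G$ of $G$ in $SU(N)\rtimes\langle\tau\rangle$ is compact, consists of unitary or antiunitary maps, and has the same orbits as $G$. Let $H\subset SU(N)$ be its identity component and $\h=\Lie(H)\neq0$. The kernel is finite, so $H$ acts nontrivially on $\Gr_r(\C^N)$. It also acts nontransitively: $H$ is normal of finite index in $\widehat G$, so if $H$ were transitive then $G$ would be too. Every element of $\widehat G$ normalizes $\h$ through its (possibly antiunitary) conjugation action. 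Hence \cref{lem:complex-moment} shows that $q_\h(P)=\|\pr_\h J_r(P)\|_{\gfr}^2$ is nonconstant and invariant under the whole of $G$, and not merely under $H$.

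Next we bound the trigonometric degree. Choose an orthonormal basis $A_1,\ldots,A_s$ of $\h$ for \eqref{eq:complex-trace}. Then $q_\h(P)=\sum_\nu \ip{J_r(P)}{A_\nu}_{\gfr}^2$, and $\ip{J_r(P)}{A_\nu}_{\gfr}=-\Rea\tr(iPA_\nu)+\frac rN\Rea\tr(iA_\nu)$. The second term vanishes because $A_\nu$ is traceless, so each coefficient is the real-linear functional $P\mapsto\Rea\tr(-iPA_\nu)$ of the projector. Along a rank-one rotation \eqref{eq:rank-one-rotation} we have $P(t)=e_1(t)e_1(t)^*+\sum_{j\ge2}e_je_j^*$ with $e_1(t)=\cos t\,e_1+\sin t\,u$. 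Its entries are quadratic in $\cos t,\sin t$, hence trigonometric polynomials of degree at most $2$. Each coefficient $\ip{J_r(P(t))}{A_\nu}$ therefore has degree at most $2$, and $q_\h$, being a sum of squares of these, has degree at most $4$.

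Finally we apply \cref{lem:oscillation}. Affinely rescale $q_\h$ so that its range is exactly $[-1,1]$. This is possible because $q_\h$ is nonconstant, and the rescaling changes neither the degree nor the invariance. With $d=4$ we get $\diam(X_{\C,r,N}/G)\ge 2/(4\sqrt r)=1/(2\sqrt r)$. Dividing by $\diam X_{\C,r,N}=\frac\pi2\sqrt r$ from \cref{lem:diameters} gives $\Delta\ge 1/(\pi r)$, which is $1/\pi$ when $r=1$.

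The main point needing care is the lifting step: we need nontransitivity of $H$ on $\Gr_r(\C^N)$, and we need the invariant to descend through antiunitary components. The first follows from the finite index of $H$ and the connectedness of the Grassmannian. The second is exactly the sign cancellation in \cref{lem:complex-moment}. The degree count is routine, but one must keep the degree at $4$ and not let it double, since that would spoil the constant.
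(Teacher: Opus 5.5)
Your proposal is correct and follows the paper's proof essentially step for step. Like the paper, you lift to $SU(N)\rtimes\langle\tau\rangle$, apply \cref{lem:complex-moment} to the identity component $H$, get trigonometric degree at most $4$ because each coefficient $\ip{J_r(P)}{A_\nu}_{\gfr}$ is real-linear in the projector, and apply \cref{lem:oscillation} with $d=4$. One minor correction to your closing remark: nontransitivity of $H$ needs neither finite index nor connectedness of the Grassmannian, since $H\subseteq\widehat G$ already places each $H$-orbit inside a $G$-orbit; those two facts would only be needed for the converse implication.
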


\begin{proof}
Lift $G$ through $SU(N)\rtimes C_2$ as in
\cref{sec:reduction}, and let $H$ be the identity component of
the lift. Then $H\subset SU(N)$ is compact and connected. Its
action is nontrivial since the lift has finite kernel and
$\dim G^0>0$. It is nontransitive because its orbits are
contained in $G$-orbits. By \cref{lem:complex-moment},
$q_\h$ is nonconstant. Normality of the identity component,
together with the unitary and antiunitary invariance proved in
that lemma, makes $q_\h$ invariant under  $G$.

Along a unit rank-one rotation, write
\begin{align*}
P(t)=P_0+v(t)v(t)^*,\qquad
 v(t)=\cos(t)e+\sin(t)u,
\end{align*}
where $P_0$ is the fixed rank-$(r-1)$ projector and $e,u$ are
orthogonal unit vectors perpendicular to its range. The entries
of $P(t)$ are trigonometric polynomials of degree at most two.
Since $J_r$ is affine in $P$ and \eqref{eq:complex-energy} is
quadratic in $J_r$, its degree is at most four. For $r=1$ the
same formula holds with $P_0=0$.

Normalize the range of $q_\h$ to $[-1,1]$ and apply
\cref{lem:oscillation}. The absolute bound is
$2/(4\sqrt r)=1/(2\sqrt r)$. Division by the complex
Grassmannian diameter $\pi\sqrt r/2$ proves the normalized
bound, including at rank one.
\end{proof}

The real and complex parts of \cref{thm:tail} for $r\ge2$
follow from \cref{thm:real-tail,thm:complex-tail}. Only the
complex theorem above extends the same quantitative constant to
$r=1$.

\subsection{The quaternionic Grassmannians}\label{sec:quaternionic}

Write $V=\HH^N$ as a right quaternionic vector space, with real
Euclidean scalar product $\ip{x}{y}_V=\Rea(x^*y)$. A quaternionic
matrix acts on the left. In this section $\gfr=\spn(N)$ is the
algebra of quaternionic skew-Hermitian matrices, equipped with
\begin{align}
\label{eq:trace}
 \ip{X}{Y}_{\gfr}=-\Rea\tr_\HH(XY),\qquad X,Y\in\gfr.
\end{align}
This form is positive definite and invariant under conjugation by
$Sp(N)$. The restriction of \eqref{eq:trace} will always be used on
a subalgebra; its simple factors are not rescaled independently.

Let $\h\subset\gfr$ be a Lie subalgebra, and let
$A_1,\ldots,A_s$ be an orthonormal basis for this restriction. Define
\begin{align}
\label{eq:quaternionic-energies}
 f(x)=\sum_{\nu=1}^s|x^*A_\nu x|^2,\qquad
 q_r(P)=\sum_{\nu=1}^s\|PA_\nu P\|_{\mathrm{Frob}}^2.
\end{align}
Here $x\in V$, $P=P^*=P^2$ is the orthogonal projector onto an
$r$-dimensional quaternionic subspace, and
$\|Z\|_{\mathrm{Frob}}^2=\Rea\tr_\HH(Z^*Z)$. Thus $f$ is a real
homogeneous polynomial of degree four on $V$, and $q_r$ is a smooth
function on $\Gr_r(\HH^N)$. An orthogonal change of basis in $\h$
leaves both sums unchanged. Furthermore, any element of $Sp(N)$
normalizing $\h$ preserves these functions, since its adjoint action
preserves \eqref{eq:trace}.

Although the squared-norm expression for $q_r$ contains four copies
of $P$, idempotence and cyclicity of the real trace give
\begin{align*}
\|PA_\nu P\|_{\mathrm{Frob}}^2=-\Rea\tr_\HH(PA_\nu P A_\nu P)
   =-\Rea\tr_\HH(PA_\nu P A_\nu),
\end{align*}
Thus 
\begin{align}
\label{eq:quadratic}
 q_r(P)=-\sum_{\nu=1}^s\Rea\tr_\HH(PA_\nu P A_\nu).
\end{align}
Consequently $q_r$ is quadratic in the real coordinates of $P$.
For a unit vector $x$, the rank-one projector is $P=xx^*$ and
$PA_\nu P=x(x^*A_\nu x)x^*$. It follows that
\begin{align}
\label{eq:quaternionic-rank-one}
 q_1(xx^*)=f(x)\qquad (|x|=1).
\end{align}
This identity will treat rank one directly. %For higher rank we will
%first pass from constant compression energy to constant quartic
%energy, using the Casimir of the whole acting group.

We begin with the linear algebra that allows a quartic identity to
determine a four-form. Choose orthogonal complex structures $I,J,K$
in the algebra of right scalar multiplications satisfying
$I^2=J^2=K^2=-1$ and $IJ=K$. For example, with the usual quaternionic
units one may take $Ix=xi$, $Jx=xj$, and $Kx=-xk$; this sign accounts
for the reversal of order in composition of right multiplications.
The group of right unit scalars will be denoted by $Sp(1)_{\mathrm R}$.
It commutes with the action of $\gfr$.
For $L=I,J,K$, define the alternating two-form
$\eta_L(v,w)=\ip{Lv}{w}_V$ and the four-form
\begin{align*}
\Omega=\eta_I\wedge\eta_I+\eta_J\wedge\eta_J+\eta_K\wedge\eta_K.
\end{align*}
Right unit scalars rotate the three two-forms through $SO(3)$, so
$\Omega$ is $Sp(1)_{\mathrm R}$-invariant.

\begin{lemma}\label{lem:quaternionic-evaluation}
An $Sp(1)_{\mathrm R}$-invariant real alternating four-form on $V$
is determined by its values on $(x,Ix,Jx,Kx)$, for $x\in V$.
More precisely, the linear map
\begin{align*}
T:(\Lambda^4V^*)^{Sp(1)_{\mathrm R}}
   \longrightarrow (\Sym^4V^*)^{Sp(1)_{\mathrm R}},
 \qquad (T\Phi)(x)=\Phi(x,Ix,Jx,Kx),
\end{align*}
is an isomorphism. We identify a symmetric tensor of degree four
with its homogeneous quartic polynomial.
\end{lemma}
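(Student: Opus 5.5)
The plan is to check that $T$ is well defined, compare dimensions of the two invariant spaces using Howe duality for the commuting pair $GL(N,\HH)$ and $Sp(1)_{\mathrm R}$, and then conclude by Schur's lemma using one nonzero value of $T$.

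\emph{Well-definedness.} Clearly $T\Phi$ is a real homogeneous quartic for any $\Phi\in\Lambda^4V^*$. For invariance, fix a unit $q\in Sp(1)_{\mathrm R}$. The four vectors $xq,(xq)i,(xq)j,(xq)k$ form an orthogonal basis of the quaternionic line $x\HH$, each of length $|x|$. They differ from $(x,xi,xj,xk)$ by right multiplication by $q$, which is a rotation of $x\HH$ and lies in the connected group $SO(x\HH)$. Since $\Phi$ restricted to the $4$-dimensional space $x\HH$ is a multiple of its volume form, the values of $\Phi$ on the two frames agree. Hence $T\Phi(xq)=T\Phi(x)$. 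The signs in the choice $K x=-xk$ only fix the overall orientation and play no role.

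\emph{Commuting symmetry and complexification.} The group $GL(N,\HH)$ of left quaternionic-linear automorphisms commutes with $I,J,K$. The identity $(T(g^*\Phi))(x)=\Phi(gx,gIx,gJx,gKx)=(T\Phi)(gx)$ shows that $T$ is $GL(N,\HH)$-equivariant. Complexify: $V\otimes_\R\C\cong E\otimes W$, where $W=\C^2$ is the standard representation of $Sp(1)_{\mathrm R}\cong SU(2)$ and $E=\C^{2N}$ carries the complexified action of $GL(N,\HH)$. The Zariski closure of that action is $GL(2N,\C)$, since $GL(N,\HH)$ is a real form of $GL(2N,\C)$. By the Cauchy formulas,
\begin{align*}
\Lambda^4(E\otimes W)\cong\bigoplus_{|\lambda|=4}S_\lambda E\otimes S_{\lambda'}W,\qquad
\Sym^4(E\otimes W)\cong\bigoplus_{|\lambda|=4}S_\lambda E\otimes S_{\lambda}W.
\end{align*}
Since $\dim W=2$, the only partition $\mu\vdash4$ with at most two rows and $S_\mu W$ containing the trivial $SU(2)$-representation is $\mu=(2,2)$, where $S_{(2,2)}W=\det^2$. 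In both decompositions the $Sp(1)_{\mathrm R}$-invariants are therefore exactly one copy of $S_{(2,2)}E$; in the exterior power this arises from $\lambda'=(2,2)$, that is $\lambda=(2,2)$. Dualizing changes nothing. So the complexifications of the source and target of $T$ are both isomorphic to the irreducible $GL(2N,\C)$-module $S_{(2,2)}E$.

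\emph{Schur's lemma and nonvanishing.} The complexified map $T_\C$ is $GL(N,\HH)$-equivariant and hence, by Zariski density, $GL(2N,\C)$-equivariant. By Schur's lemma it is either zero or an isomorphism. To exclude zero, evaluate $T$ on the invariant form $\Omega$ defined above. On the orthogonal frame $(x,Ix,Jx,Kx)$, the only nonzero values of $\eta_L$ are $\eta_L(x,Lx)=|x|^2$ together with the paired entries $\eta_I(Jx,Kx)=\pm|x|^2$ and its analogues. Consequently each $\eta_L\wedge\eta_L$ contributes $\pm2|x|^4$ with a common sign, and $T\Omega(x)=\pm6|x|^4\ne0$. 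I expect the sign bookkeeping here, which hinges on the convention $IJ=K$, to be the only computation needing care; I will do it in the quaternion basis $x=e_1$. With $T_\C$ an isomorphism, $T$ is a real isomorphism, proving the lemma.

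\emph{Main obstacle.} The real work is the representation-theoretic identification of both invariant spaces with $S_{(2,2)}E$. In particular I must check that the Cauchy decompositions are compatible with the real structures, or else simply note that an injective complexification with equal dimensions already suffices. If this route proves awkward, a fallback is to prove injectivity by polarization along quaternionic lines and then compare dimensions directly.
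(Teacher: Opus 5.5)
Your proposal is correct and follows essentially the paper's proof: the Cauchy decompositions identify both complexified invariant spaces with the irreducible module $S_{(2,2)}E^*$, $T$ is $GL(N,\HH)$-equivariant, and Schur's lemma together with $T\Omega=6|x|^4\ne0$ gives the isomorphism. Your differences are minor: Zariski density stands in for the paper's $\mathfrak{gl}(N,\HH)\otimes_\R\C\cong\mathfrak{gl}(2N,\C)$, and restricting $\Phi$ to the line $x\HH$ stands in for the paper's $SO(3)$-rotation of $(I,J,K)$, which shows that $T\Phi$ is invariant for every $\Phi$; one small correction is that the frame at $xq$ is the image of the frame at $x$ under $xa\mapsto xqa$, not under right multiplication by $q$, but that map also lies in $SO(x\HH)$, so your conclusion stands.
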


\begin{proof}
We first prove that $(T\Phi)(xq)=(T\Phi)(x)$ for every $x\in V$
and every unit quaternion $q$. Let $R_qx=xq$. Conjugation by $R_q$
preserves $\operatorname{span}_{\R}\{I,J,K\}$ and its Euclidean scalar
product. Its matrix $O_q$ in the ordered basis $(I,J,K)$ belongs to
$SO(3)$: it is orthogonal, depends continuously on $q$, and equals the
identity at $q=1$, while $Sp(1)$ is connected. Define
\begin{align*}
\widetilde I=R_q^{-1}IR_q,\qquad
 \widetilde J=R_q^{-1}JR_q,\qquad
 \widetilde K=R_q^{-1}KR_q.
\end{align*}
Then $(\widetilde I,\widetilde J,\widetilde K)=(I,J,K)O_q$. Invariance
and alternation give
\begin{align*}
\begin{aligned}
 (T\Phi)(R_qx)
 &=\Phi(R_qx,IR_qx,JR_qx,KR_qx)\\
 &=\Phi(x,\widetilde Ix,\widetilde Jx,\widetilde Kx)\\
 &=\det(O_q)\Phi(x,Ix,Jx,Kx)=(T\Phi)(x).
\end{aligned}
\end{align*}
Since $T\Phi$ is a homogeneous quartic polynomial, this proves that
$T$ takes values in the stated invariant symmetric-tensor space.

The complexification of the quaternionic representation has the form
\begin{align*}
V_{\mathbb C}\cong E\otimes H,
\qquad
\dim_{\mathbb C}E=2N,\quad H=\mathbb C^2.
\end{align*}
The complexified right $Sp(1)$-action is the standard $SL_2(\mathbb C)$-action on $H$; it acts trivially on $E$. Since $H^*\cong H$ as an $SL_2$-module, we can write $V_{\mathbb C}^*\cong E^*\otimes H.$ The symmetric and exterior Cauchy formulas give
\begin{align*}
\operatorname{Sym}^4(E^*\otimes H)
=
\bigoplus_{\lambda\vdash4} 
S_\lambda E^*\otimes S_\lambda H, \qquad \Lambda^4(E^*\otimes H)
=
\bigoplus_{\lambda\vdash4}
S_\lambda E^*\otimes S_{\lambda'}H.
\end{align*}
Here $\lambda$ is a partition of $4$, $\lambda'$ is its transpose, and $S_\lambda$ denotes the corresponding Schur functor. These are the standard Cauchy identities; see \cite[\S~6.2.8]{SamSnowden}.  For a partition $(a,b)$,
\begin{align*}
S_{(a,b)}H
\cong
\operatorname{Sym}^{a-b}H\otimes(\det H)^b.
\end{align*}
On $SL_2$, the determinant factor is trivial. Thus this representation has an invariant vector exactly when $a=b$. In degree four, $a+b=4$, so the only possibility is $(a,b)=(2,2).$ Therefore
\begin{align*}
\left((\operatorname{Sym}^4V^*)^{Sp(1)}\right)_{\mathbb C}
\cong S_{(2,2)}E^*, \qquad \left((\Lambda^4V^*)^{Sp(1)}\right)_{\mathbb C}
\cong S_{(2,2)}E^*.
\end{align*} This Schur module is irreducible under $GL(E)$. 
Every $g\in GL(N,\mathbb H)$ commutes with $I,J,K$. Using the usual action on covariant tensors,
\begin{align*}
\begin{aligned}
T(g\cdot\Phi)(x)
&=\Phi(g^{-1}x,g^{-1}Ix,g^{-1}Jx,g^{-1}Kx)\\
&=\Phi(g^{-1}x,I g^{-1}x,J g^{-1}x,K g^{-1}x)\\
&=(T\Phi)(g^{-1}x).
\end{aligned}
\end{align*}
Hence $T$ is $GL(N,\mathbb H)$-equivariant.
The Lie algebra of this group satisfies
\begin{align*}
\mathfrak{gl}(N,\mathbb H)\otimes_{\mathbb R}\mathbb C
\cong\mathfrak{gl}(2N,\mathbb C)=\mathfrak{gl}(E).
\end{align*}
Thus $T_{\mathbb C}$ intertwines the $\mathfrak{gl}(E)$-actions on the two irreducible modules above. For each $L=I,J,K$, the quaternionic relations give
\begin{align*}
(\eta_L\wedge\eta_L)(x,Ix,Jx,Kx)=2|x|^4.
\end{align*}
Consequently,
\begin{align}
\label{eq:evalOmega}
 (T\Omega)(x)=\Omega(x,Ix,Jx,Kx)=6|x|^4.
\end{align}
This is not the zero polynomial, so $T_{\mathbb C}\neq0$. Schur's lemma now makes the complexified map an isomorphism, and
therefore the original real map is an isomorphism as well.
\end{proof}

\begin{theorem}\label{thm:quartic-rigidity}
Suppose $N>28$ and $0\ne\h\subset\spn(N)$. If the quartic $f$ in
\eqref{eq:quaternionic-energies}, formed using the restricted trace
scalar product \eqref{eq:trace}, is constant on the unit sphere of
$V$, then $\h=\spn(N)$.
\end{theorem}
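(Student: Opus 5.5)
The plan is to turn constancy of $f$ into a first Bianchi identity, and so into a symmetric pair, and then read off $\h$ from the classification of compact quaternionic symmetric spaces.

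\emph{Step 1: $f$ as the image of a four-form.} For $A\in\gfr$, put $\omega_A(v,w)=\ip{Av}{w}_V$. This is an alternating two-form, and it is $Sp(1)_{\mathrm R}$-invariant because $A$ commutes with right scalars. Define
\begin{align*}
\Phi_\h=\sum_{\nu=1}^s\omega_{A_\nu}\wedge\omega_{A_\nu}\in(\Lambda^4V^*)^{Sp(1)_{\mathrm R}}.
\end{align*}
This form does not depend on the orthonormal basis. The imaginary quaternion $x^*A x$ has real components $\pm\ip{Ax}{Lx}_V$ for $L=I,J,K$, so $f(x)=\sum_\nu\sum_L\omega_{A_\nu}(x,Lx)^2$. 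Since $A$ commutes with $I,J,K$ and is skew, each pairing $\omega_A(Jx,Kx)$, $\omega_A(Ix,Kx)$, $\omega_A(Ix,Jx)$ equals $\pm\omega_A(x,Ix)$, $\pm\omega_A(x,Jx)$, $\pm\omega_A(x,Kx)$ respectively. Expanding $(\omega_A\wedge\omega_A)(x,Ix,Jx,Kx)$ should then give $2\sum_L\omega_A(x,Lx)^2$, that is,
\begin{align*}
T\Phi_\h=2f.
\end{align*}
I will fix the signs by the same computation that yields \eqref{eq:evalOmega}. If $f\equiv c$ on the unit sphere, then $f=c|x|^4$, and \cref{lem:quaternionic-evaluation} together with \eqref{eq:evalOmega} gives $\Phi_\h=\tfrac{c}{3}\Omega$. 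We have $c>0$: if $c=0$, then $x^*A_\nu x=0$ for all $x$, which forces $A_\nu=0$, contradicting $\h\neq0$.

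\emph{Step 2: a symmetric pair.} Note that $\Omega$ is itself $\Phi_{\spn(1)}$ for the algebra spanned by $I,J,K$ with the corresponding normalization. For a real $\lambda$, define $R(x,y)=\sum_\nu\ip{A_\nu x}{y}A_\nu-\lambda\sum_L\ip{Lx}{y}L$. For tensors of the form $\sum B\otimes B$, the first Bianchi identity is equivalent to the vanishing of the totally skew part, which is $\Phi_\h-\lambda'\Omega$. So for the value of $\lambda$ matching $c/3$ (with $\lambda>0$), $R$ is an algebraic curvature tensor with values in $\kk=\h\oplus\spn(1)$, and it is $\kk$-invariant. The standard construction then puts a Lie algebra structure on $\kk\oplus V$, with $[x,y]=R(x,y)$, and Jacobi follows from Bianchi plus invariance. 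Positivity of $c$ makes this the compact type. Its isotropy contains the right-scalar $\spn(1)$ acting as on $\HH^N$, and $\dim V=4N$. Hence it is a compact quaternionic symmetric space of dimension $4N$. I also need to check that it is irreducible, for instance by showing that a splitting of $V$ would split $\Phi_\h$ differently from $\Omega$.

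\emph{Step 3: classification.} By Wolf's classification of quaternionic symmetric spaces, for $N>28$ only the three classical series survive: $\HH P^N$, $\Gr_2(\C^{N+2})$ and $\Gr_4^+(\R^{N+4})$. The exceptional ones have $4N\le112$. The first gives $\h=\spn(N)$, which is the desired conclusion. The main obstacle is excluding the other two, where $\h\cong\mathfrak u(N)$ or $\h\cong\son(N)\oplus\spn(1)$. They do satisfy a Bianchi identity for \emph{some} invariant inner product on $\h$. However, the hypothesis fixes the inner product as the restricted trace form \eqref{eq:trace}, and the bracket relation forces specific relative weights on the centre of $\mathfrak u(N)$, respectively on the extra $\spn(1)$, relative to the simple part. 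I would first compute $f$ directly for these two embeddings with the trace normalization, evaluating at, say, $x=e_1$ and at $x=(e_1+e_2 j)/\sqrt2$. I expect two different values, which would contradict constancy. If that fails, I would instead compare the Casimir obtained by contracting $\Phi_\h$ twice with the metric against the one forced by the curvature tensor of the symmetric space.
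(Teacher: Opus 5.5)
Your outline is the paper's proof. It uses the same four-form $\Phi_\h=\sum_\nu\omega_{A_\nu}\wedge\omega_{A_\nu}$ pushed through \cref{lem:quaternionic-evaluation}, the same Moroianu--Semmelmann symmetric pair on $\kk\oplus V$ with $\kk=\h\oplus\mathfrak s$, and Wolf's classification. It even uses the same test vectors $e_1$ and $(e_1+e_2j)/\sqrt2$: the paper finds $f=1$ versus $\tfrac32$ for $\mathfrak u(N)$, and $\tfrac3N$ versus $\tfrac12+\tfrac1N$ for $\son(N)\oplus\spn(1)$. Two points need repair.

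\textbf{The sign in Step~1.} The sign is wrong, and it matters. Since $A$ commutes with $I,J,K$, you get $\omega_A(Jx,Kx)=-\omega_A(x,Ix)$, $\omega_A(Ix,Kx)=\omega_A(x,Jx)$ and $\omega_A(Ix,Jx)=-\omega_A(x,Kx)$. With the wedge signs $(+,-,+)$ all three terms are then negative, so $(\omega_A\wedge\omega_A)(x,Ix,Jx,Kx)=-2|x^*Ax|^2$. By contrast, $(\eta_L\wedge\eta_L)(x,Ix,Jx,Kx)=+2|x|^4$. Hence $T\Phi_\h=-2f$, and constancy gives $\Phi_\h+\tfrac c3\Omega=0$, not $\Phi_\h=\tfrac c3\Omega$. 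In your $R$ this means $\lambda=-c/3$, so the $\mathfrak s$-term enters as $+\tfrac c3\sum_L\ip{Lx}{y}L$. That positive coefficient is exactly what makes the invariant product on $\kk$ (with $|I|^2=|J|^2=|K|^2=3/c$) positive definite, and hence $\kk\oplus V$ of compact type. With the sign you wrote, $\mathfrak s$ would carry negative weight, and \emph{positivity of $c$ gives compact type} would fail.

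\textbf{Irreducibility.} You need irreducibility to land in Wolf's list and read off $\h$, but you leave it open. The paper derives it directly from constancy. Suppose $\h$ preserved a proper quaternionic subspace $W$, and take unit vectors $y\in W$, $z\in W^\perp$ and a unit quaternion $u$. Block-diagonality gives $(y+zu)^*A_\nu(y+zu)=y^*A_\nu y+\bar u(z^*A_\nu z)u$. Averaging over $u\in Sp(1)$ kills the cross term, so $\int f(y+zu)\dd u=f(y)+f(z)=2c$, while $f(y+zu)=c|y+zu|^4=4c$, a contradiction. Every real $\kk$-invariant subspace is $\mathfrak s$-stable and hence quaternionic, so this yields real irreducibility of $\kk$ on $V$.

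Your own suggestion can also be completed. The $\Lambda^2W^*\otimes\Lambda^2W^{\perp*}$ component of $\Phi_\h$ lies in the tensor product of the $Sp(1)_{\mathrm R}$-invariant parts. The corresponding component of $\Omega$ is $2\sum_L\eta^W_L\otimes\eta^{W^\perp}_L$, which lies in the tensor product of the nontrivial (adjoint) parts. These summands are complementary, so the nonzero mixed part of $\Omega$ would have to vanish.

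\textbf{Passing to the standard models.} Before computing there, note that the equivalence supplied by the classification is orthogonal after scaling. It carries $\mathfrak s$ to the right-scalar algebra of the model, so it lies in $Sp(N)\cdot Sp(1)$ and preserves both \eqref{eq:trace} and $f$. This justifies evaluating $f$ on the standard embeddings with the fixed trace normalization.
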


\begin{proof}
By homogeneity, $f(x)=a|x|^4$ for some $a\ge0$. In fact $a>0$.
If $a=0$, then $x^*A_\nu x=0$ for every $x$ and every $\nu$.
In particular, $\ip{A_\nu x}{Ix}_V=0$ for every $x$. The real
endomorphism $A_\nu I=IA_\nu$ is symmetric, and
$\ip{A_\nu I x}{x}_V=-\ip{A_\nu x}{Ix}_V=0$.
Real polarization gives $A_\nu I=0$, hence $A_\nu=0$ for every $\nu$,
contradicting $\h\ne0$.
The constancy hypothesis also forces $\h$ to act quaternionically
irreducibly. To see this, suppose $V=W\oplus W^\perp$ is a proper
orthogonal splitting into $\h$-invariant quaternionic subspaces,
and choose unit vectors $y\in W$ and $z\in W^\perp$. Each $A_\nu$
is block diagonal, so for every unit quaternion $u$,
\begin{align*}
(y+zu)^*A_\nu(y+zu)
   =y^*A_\nu y+\bar u(z^*A_\nu z)u.
\end{align*}
The two diagonal values are imaginary quaternions. For any imaginary quaternion $q$,
\begin{align*}
\int_{\mathrm{Sp}(1)}\bar uqu\,du=0,
\end{align*}
where $du$ is Haar probability measure.
Therefore integration with respect to Haar probability measure
gives
\begin{align*}
\int_{Sp(1)}f(y+zu)\dd u=f(y)+f(z)=2a.
\end{align*}
On the other hand, $|y+zu|^2=2$, so the integrand is identically
$4a$. This contradiction proves irreducibility.

For $A\in\h$, define the alternating two-form
$\omega_A(v,w)=\ip{Av}{w}_V$. We use  the wedge convention
\begin{align*}
(\omega\wedge\omega)(v_0,v_1,v_2,v_3)
 =2\bigl(\omega(v_0,v_1)\omega(v_2,v_3)
 -\omega(v_0,v_2)\omega(v_1,v_3)
 +\omega(v_0,v_3)\omega(v_1,v_2)\bigr),
\end{align*}
Since  $A$ commutes with $I,J,K$, we obtain 
\begin{align}
\label{eq:evalA}
 (\omega_A\wedge\omega_A)(x,Ix,Jx,Kx)=-2|x^*Ax|^2.
\end{align}
For completeness, if $b=\ip{Ax}{Ix}_V$, $c=\ip{Ax}{Jx}_V$,
and $d=\ip{Ax}{Kx}_V$, the other three paired values are
\begin{align*}
\omega_A(Jx,Kx)=-b,\qquad
 \omega_A(Ix,Kx)=c,\qquad
 \omega_A(Ix,Jx)=-d.
\end{align*}
Their contribution is $-2(b^2+c^2+d^2)$, and
$b^2+c^2+d^2=|x^*Ax|^2$. Each $\omega_A$ is invariant under
$Sp(1)_{\mathrm R}$, and so is $\Omega$. Hence
\begin{align*}
\Phi:=\sum_{\nu=1}^s\omega_{A_\nu}\wedge\omega_{A_\nu}
       +\frac a3\Omega
\end{align*}
is an invariant four-form. Equations
\eqref{eq:evalOmega} and \eqref{eq:evalA} give
$T\Phi(x)=-2f(x)+2a|x|^4=0$. By
\cref{lem:quaternionic-evaluation},
\begin{align}
\label{eq:bianchi}
 \sum_{\nu=1}^s\omega_{A_\nu}\wedge\omega_{A_\nu}
 +\frac a3\bigl(\eta_I\wedge\eta_I+
                 \eta_J\wedge\eta_J+\eta_K\wedge\eta_K\bigr)=0.
\end{align}
This identity is precisely the vanishing condition needed to construct a symmetric pair.
Let $\mathfrak s=\operatorname{span}_\R\{I,J,K\}\simeq\spn(1)$
and $\kk=\h\oplus\mathfrak s$, acting on the real vector space $V$.
Give $\kk$ the invariant positive scalar product
$\ip{\cdot}{\cdot}_{\kk}$ whose summands are orthogonal, whose
restriction to $\h$ is \eqref{eq:trace}, and for which
\begin{align*}
\ip{I}{I}_{\kk}=\ip{J}{J}_{\kk}=\ip{K}{K}_{\kk}=\frac3a,
 \qquad \ip{I}{J}_{\kk}=\ip{J}{K}_{\kk}=\ip{K}{I}_{\kk}=0.
\end{align*}
The action is faithful, since  an element in the intersection of $\h$ and
$\mathfrak s$ would be a right imaginary scalar commuting with all
right scalars, and must be zero. It is also real irreducible. Indeed,  any real
$\kk$-invariant subspace is stable under $I,J,K$, hence quaternionic,
and the preceding argument proves $\h$-irreducibility.

We use the symmetric-pair criterion of Moroianu--Semmelmann
\cite[Proposition~2.5]{MS}, with its construction in
\cite[Lemma~2.1, Definition~2.2, and Remark~2.3]{MS}.
For a faithful real irreducible orthogonal representation of a
compact Lie algebra with a specified positive invariant scalar
product, that criterion says the following. If $Z_\alpha$ is an
orthonormal basis of the algebra and
$\omega_{Z_\alpha}(v,w)=\ip{Z_\alpha v}{w}_V$, then the bracket
construction below is a Lie bracket exactly when its
\emph{Casimir four-form} $\sum_\alpha
\omega_{Z_\alpha}\wedge\omega_{Z_\alpha}$ vanishes.
For our metric on $\kk$, an orthonormal basis is
\begin{align*}
A_1,\ldots,A_s,\quad \sqrt{a/3}\,I,\quad
 \sqrt{a/3}\,J,\quad \sqrt{a/3}\,K.
\end{align*}
Thus its Casimir four-form is exactly the left side of
\eqref{eq:bianchi}; all hypotheses of the criterion are satisfied.
Here is the bracket  in our conventions.
Define $C:\Lambda^2V\to\kk$ by
\begin{align}
\label{eq:symmetric-bracket}
 \ip{C(v,w)}{Z}_{\kk}=\ip{Zv}{w}_V\qquad(Z\in\kk),
\end{align}
and set $[Z,v]=Zv$ and $[v,w]=C(v,w)$ on
$\mathfrak l=\kk\oplus V$, keeping the original bracket on $\kk$.
Nondegeneracy defines $C$ uniquely, and skew-adjointness of $Z$
makes it alternating. Invariance of the scalar products makes $C$
$\kk$-equivariant, so Jacobi identities with an entry in $\kk$
hold. For the remaining identity, an orthonormal basis $Z_\alpha$
as above gives
\begin{align*}
\ip{C(u,v)w+C(v,w)u+C(w,u)v}{z}_V=\frac12\sum_\alpha
    (\omega_{Z_\alpha}\wedge\omega_{Z_\alpha})(u,v,w,z)=0.
\end{align*}
This is the Jacobi calculation in Proposition~2.5 of \cite{MS}.
It also fixes the positive sign in \eqref{eq:symmetric-bracket}.

The positive scalar product
$\ip{\cdot}{\cdot}_{\kk}\oplus\ip{\cdot}{\cdot}_V$ on
$\mathfrak l$ is invariant, and
$\sigma(Z+v)=Z-v$ is an involutive Lie algebra automorphism.
In this sense $(\mathfrak l,\kk)$ is a symmetric pair: $\kk$ is
the fixed algebra of $\sigma$ and $V$ its minus-one eigenspace.
The center of $\mathfrak l$ is zero. Indeed it is preserved by
$\sigma$, and its $\kk$-component vanishes by faithfulness on $V$,
while its $V$-component is fixed by $\mathfrak s$ and therefore
vanishes. Consequently $\mathfrak l$ is compact semisimple.
Let $L$ be its simply connected compact group and $K_0$ the identity
component of the subgroup fixed by the integrated involution.
The quotient $L/K_0$ is compact and simply connected, and the
invariant metric induced from $V$ makes it a Riemannian symmetric
space. It is irreducible because its isotropy representation on $V$
is real irreducible.

The holonomy algebra of this space is exactly $\kk$ in its action
on $V$. Indeed, $[V,V]$ spans $\kk$: an element $Z\in\kk$
orthogonal to every $C(v,w)$ would satisfy $\ip{Zv}{w}_V=0$
for all $v,w$, and hence vanish. The symmetric-space curvature
formula is $R(v,w)z=-C(v,w)z$; see
\cite[Proposition~3.1.2]{Gorodski}. Since the curvature is parallel,
its values generate the holonomy algebra, which is therefore
$\kk$, as also described in \cite[Remark~2.1.8]{Gorodski}.
In particular holonomy preserves the algebra
$\R\operatorname{Id}\oplus\mathfrak s$ and contains the full
scalar factor $\mathfrak s$. Parallel transport extends this
algebra to a parallel field of quaternion algebras on $L/K_0$.
This is a quaternionic structure with \emph{quaternionic scalar
holonomy}: the scalar part of holonomy is the whole $Sp(1)$,
as opposed to a circle or a group of real scalars.
We may thus apply Wolf's classification
\cite[Theorem~5.4, pp.~1043--1044]{WolfOriginal}. Its explicit list
is also given in \cite[Proposition~2.1 and Table~1, pp.~266--267]{WolfTable}, which gives
the following possibilities for its quaternion-linear isotropy algebra.
%Removing the distinguished right scalar algebra from the isotropy
%algebra leaves the following possibilities:
\begin{center}
\begin{tabular}{ll}
Quaternion-linear isotropy algebra $\h$ & Quaternionic dimension $N$\\\hline
$\spn(N)$ & arbitrary\\
$\mathfrak u(N)$ & arbitrary\\
$\son(N)\oplus\spn(1)$ & arbitrary\\
$\spn(1),\ \spn(3),\ \sun(6),\ \son(12),\ \mathfrak e_7$
 & $2,\ 7,\ 10,\ 16,\ 28$, respectively.
\end{tabular}
\end{center}
The three classical spaces in this list are, respectively,
quaternionic projective space, the complex two-plane Grassmannian,
and the oriented real four-plane Grassmannian. Their
quaternion-linear isotropy actions on $\HH^N$ are the full
symplectic action, the standard complex-matrix inclusion
$\mathfrak u(N)\subset\spn(N)$, and the real skew-matrix action
together with scalar \emph{left} imaginary quaternions. The five
exceptional rows come from the compact spaces of types
$G_2,F_4,E_6,E_7,E_8$ in Wolf's table.

Here the classification identifies the isotropy representation
together with its quaternionic structure, not just an abstract Lie
algebra. After an overall metric scaling, its tangent identification
can be chosen orthogonal. Such an identification preserves the
restricted trace scalar product, because
$\tr_\R(XY)=4\Rea\tr_\HH(XY)$ for quaternionic-linear
endomorphisms. It also preserves $f$: rotating the quaternionic
triple does not change
$\sum_{L=I,J,K}\ip{Ax}{Lx}_V^2=|x^*Ax|^2$.
We may therefore perform the next computations in the standard
matrix models with the original metric \eqref{eq:trace}.

Since $N>28$, only the three classical rows remain. To exclude the
two proper rows, let $E_{bc}$ be the elementary matrix and choose
\begin{align*}
x=e_1,\qquad y=\frac{e_1+e_2j}{\sqrt2}.
\end{align*}
For $\h=\mathfrak u(N)$, a trace-orthonormal basis is
\begin{align*}
iE_{bb},\qquad
 U_{bc}=\frac{E_{bc}-E_{cb}}{\sqrt2},\qquad
 V_{bc}=\frac{i(E_{bc}+E_{cb})}{\sqrt2}\quad(b<c).
\end{align*}
At $x$ only $iE_{11}$ contributes, with squared modulus one.
At $y$ the only nonzero values are
\begin{align*}
y^*(iE_{11})y=\frac i2,\qquad
 y^*(iE_{22})y=-\frac i2,\qquad
 y^*U_{12}y=\frac j{\sqrt2},\qquad
 y^*V_{12}y=\frac k{\sqrt2}.
\end{align*}
Consequently
\begin{align*}
f(x)=1,\qquad f(y)=\frac14+\frac14+\frac12+\frac12=\frac32.
\end{align*}
For $\h=\son(N)\oplus\spn(1)$, an orthonormal basis consists of
the $U_{bc}$ and
$iI_N/\sqrt N$, $jI_N/\sqrt N$, $kI_N/\sqrt N$.
The real skew matrices contribute zero at $x$, and the three
scalar matrices contribute $1/N$ each. At $y$, the sole real-skew
contribution is $|y^*U_{12}y|^2=1/2$. The scalar values are
\begin{align*}
y^*\frac{iI_N}{\sqrt N}y=0,\qquad
 y^*\frac{jI_N}{\sqrt N}y=\frac j{\sqrt N},\qquad
 y^*\frac{kI_N}{\sqrt N}y=0.
\end{align*}
Thus
\begin{align*}
f(x)=\frac3N,\qquad f(y)=\frac12+\frac1N.
\end{align*}
These values coincide only when $N=4$. Both proper classical
possibilities contradict constancy, so $\h=\spn(N)$.
\end{proof}

\begin{remark}
The cutoff $N>28$ belongs to the rigidity assertion about the
quartic and is independent of the plane rank. The exceptional
representations in Wolf's table cannot be discarded on the basis
of quartic constancy alone. The symmetric-space classification is
used only after the four-form identity \eqref{eq:bianchi} and the
quaternionic scalar holonomy have been established.
\end{remark}

\begin{lemma}\label{lem:quaternionic-bridge}
Let $1\le r<N$, let $N>28$ with $N\ne2r$, and let $G\subset  Sp(N)$
be a compact group with $\h=\Lie(G^0)\ne0$. If $r\ge2$, assume that
the whole group $G$ acts quaternionically irreducibly on $V$.
If $q_r$ is constant on $\Gr_r(\HH^N)$, then
$\h=\spn(N)$. For $r=1$ no irreducibility assumption is required.
\end{lemma}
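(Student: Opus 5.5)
The plan is to reduce constancy of $q_r$ to constancy of the quartic $f$ on the unit sphere of $V$, and then apply \cref{thm:quartic-rigidity}, which gives $\h=\spn(N)$ because $N>28$ and $\h\ne0$. For $r=1$ the reduction is immediate. By \eqref{eq:quaternionic-rank-one}, $q_1(xx^*)=f(x)$ for every unit $x$, so a constant $q_1$ makes $f$ constant on the unit sphere. No irreducibility is needed in this case.

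For $r\ge2$, expand $q_r$ in an orthonormal frame. Put $g(x,y)=\sum_\nu|x^*A_\nu y|^2$. If $P=\sum_{a=1}^r x_ax_a^*$ with $x_1,\ldots,x_r$ quaternionic-orthonormal, then
\begin{align*}
q_r(P)=\sum_{a}f(x_a)+\sum_{a\ne b}g(x_a,x_b).
\end{align*}
The second ingredient is the Casimir operator $C=-\sum_\nu A_\nu^2$. This operator is quaternion-linear and self-adjoint, and it is independent of the orthonormal basis. Every element of $G$ normalizes $\h$ and preserves \eqref{eq:trace}, so every element of $G$ commutes with $C$. Irreducibility of $G$ over $\HH$ makes the commutant a real division algebra on which the adjoint acts as conjugation, so its self-adjoint elements are real scalars. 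Hence $C=c\,\mathrm{Id}$. Summing over a full orthonormal basis $e_1,\ldots,e_N$ containing $e_i$ then gives
\begin{align*}
f(e_i)+\sum_{j\ne i}g(e_i,e_j)=\sum_\nu|A_\nu e_i|^2=c .
\end{align*}

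Now fix such a basis and write $f_i=f(e_i)$ and $b_{ij}=g(e_i,e_j)=b_{ji}$. Constancy of $q_r$ says that $\sum_{i\in S}f_i+2\sum_{i<j\in S}b_{ij}$ is the same for every $r$-subset $S$. I will prove a variant of \cref{lem:real-subsets} that keeps track of diagonal terms. Assume $2\le r\le N-2$, and compare $T\cup\{i\}$ with $T\cup\{j\}$. This gives $f_i-f_j+2\sum_{k\in T}(b_{ik}-b_{jk})=0$ for every $(r-1)$-set $T$ avoiding $i,j$. Swapping one element of $T$, exactly as in that lemma, shows that $b_{ik}-b_{jk}=\delta_{ij}$ does not depend on $k$. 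A symmetric array with this property has the form $b_{ij}=\beta+\gamma_i+\gamma_j$. Substituting back gives $f_i=\alpha-2(r-1)\gamma_i$, because $\sum_{i<j\in S}(\gamma_i+\gamma_j)=(r-1)\sum_{i\in S}\gamma_i$. The Casimir identity becomes $\alpha+(N-1)\beta+\sum_k\gamma_k+(N-2r)\gamma_i=c$. Since $N\ne2r$, all $\gamma_i$ are equal, and therefore all $f_i$ are equal. Every unit vector extends to an orthonormal basis, and any two unit vectors can be placed in a common basis. Hence $f$ is constant on the unit sphere, and \cref{thm:quartic-rigidity} finishes the argument.

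The remaining edge case is $r=N-1$. Here I would use complements: $q_{N-1}(I-xx^*)=\sum_\nu|A_\nu|^2-2\sum_\nu|A_\nu x|^2+f(x)=s-2c+f(x)$ by the Casimir identity, so constancy again forces $f$ to be constant. I expect the main obstacle to be the combinatorial step, namely pinning down exactly the solution space $b_{ij}=\beta+\gamma_i+\gamma_j$ of the constant-sum condition. This step is where both hypotheses $N\ne2r$ and irreducibility of $G$ enter, the latter through the Casimir. Without them, a nonzero $\gamma$ is not ruled out, so $f$ could genuinely vary.
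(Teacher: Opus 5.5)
Your proposal is correct and follows essentially the paper's route. Both arguments make the Casimir scalar using irreducibility of the whole group $G$, expand $q_r$ on coordinate planes as $\sum_{i\in S}f_i+2\sum_{i<j\in S}b_{ij}$, combine this with the Casimir identity to get $f(e_i)$ independent of $i$ when $N\ne 2r$, and finish with \cref{thm:quartic-rigidity}.

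The difference lies only in how the combinatorics is organized. The paper averages $f_i-f_j+2\sum_{k\in T}(b_{ik}-b_{jk})=0$ over all $(r-1)$-sets $T$. It then subtracts two Casimir identities to get $\frac{N-2r}{N-2}(f_i-f_j)=0$ directly. This covers $r=N-1$ uniformly, since there is only one $T$ in that case. Your complement computation $q_{N-1}(I-xx^*)=s-2c+f(x)$ is correct but not needed. Neither is the full description $b_{ij}=\beta+\gamma_i+\gamma_j$, which you leave unproved. Once your swap step gives $b_{ik}-b_{jk}=\delta_{ij}$, the $T$-relation reads $f_i-f_j=-2(r-1)\delta_{ij}$. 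The Casimir difference reads $f_i-f_j=-(N-2)\delta_{ij}$. Together these give $(N-2r)\delta_{ij}=0$ immediately.

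One sentence needs repair. Two unit vectors can be placed in a common orthonormal basis only when they are orthogonal, so ``any two unit vectors can be placed in a common basis'' is false. The fix is the paper's. Given unit $x,y$, use $N\ge3$ to choose a unit $z$ quaternionically orthogonal to both. The pairs $\{x,z\}$ and $\{y,z\}$ each extend to orthonormal bases, so $f(x)=f(z)=f(y)$.
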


\begin{proof}
If  $r=1$,  it follows from \eqref{eq:quaternionic-rank-one}  that
$f$ is constant on the unit sphere.  So \begin{align*}
\h=\spn(N)
\end{align*}
by \cref{thm:quartic-rigidity}.
Suppose now $r\ge2$. Consider the Casimir endomorphism
\begin{align*}
\mathcal C=-\sum_{\nu=1}^s A_\nu^2=\sum_{\nu=1}^s A_\nu^*A_\nu.
\end{align*}
It is quaternionic Hermitian and positive semidefinite. Every
$g\in G$ normalizes $\h$ and preserves the trace scalar product.
Thus conjugation by $g$ changes $A_1,\ldots,A_s$ by an orthogonal
basis change, leaving $\mathcal C$ unchanged. Its real eigenspaces
are quaternionic subspaces invariant under the whole group $G$.
Irreducibility therefore gives $\mathcal C=cI_N$.

Choose an arbitrary quaternionic orthonormal basis
$e_1,\ldots,e_N$ and put
\begin{align*}
a_i=\sum_\nu|(A_\nu)_{ii}|^2=f(e_i),\qquad
 b_{ij}=\sum_\nu|(A_\nu)_{ij}|^2=b_{ji}\quad(i\ne j).
\end{align*}
For an index set $S\subset\{1,\ldots,N\}$ of cardinality $r$,
let $P_S$ be projection onto its coordinate span. Taking the
Frobenius norm of the compressed matrices and the diagonal entries
of $\mathcal C$ gives
\begin{align}
\label{eq:coordinate-energy}
 q_r(P_S)=\sum_{i\in S}a_i+
       2\sum_{\substack{i<j;~i,j\in S}}b_{ij},\qquad
 c=a_i+\sum_{k\ne i}b_{ik}.
\end{align}
Fix distinct indices $i,j$. For every $(r-1)$-element subset $T$
avoiding $i,j$, constancy on the two planes with index sets
$T\cup\{i\}$ and $T\cup\{j\}$ yields
\begin{align*}
0=a_i-a_j+2\sum_{k\in T}(b_{ik}-b_{jk}).
\end{align*}
Average over all such $T$. Each of the $N-2$ available indices
occurs in a proportion $(r-1)/(N-2)$ of the subsets. Moreover,
subtracting the two scalar-Casimir identities in
\eqref{eq:coordinate-energy} and cancelling $b_{ij}=b_{ji}$ gives
\begin{align*}
\sum_{k\ne i,j}(b_{ik}-b_{jk})=-(a_i-a_j).
\end{align*}
It follows that
\begin{align}
\label{eq:bridge-coefficient}
 0=\left(1-\frac{2(r-1)}{N-2}\right)(a_i-a_j)
   =\frac{N-2r}{N-2}(a_i-a_j).
\end{align}
Since $N\ne2r$, all the $a_i$ are equal. The basis was arbitrary,
so $f(u)=f(v)$ for any quaternionically orthogonal unit vectors.
Any two quaternionic lines have a common perpendicular line when
$N\ge3$; comparing each of their unit vectors with a unit vector
on that line proves that $f$ is constant on the entire unit sphere.
The conclusion follows from \cref{thm:quartic-rigidity}.
\end{proof}

\begin{remark}\label{rem:middle}
For $r\ge2$, the coefficient in \eqref{eq:bridge-coefficient}
vanishes at $N=2r$. This is a limitation of the subset-averaging
argument; no failure of rigidity is asserted there. At fixed rank,
that one Grassmannian is handled by the fixed-space theorem.
Rank one uses \eqref{eq:quaternionic-rank-one} directly and has no
subset-averaging obstruction.
\end{remark}

\begin{theorem}\label{thm:quaternionic-tail}
Let $r\ge1$ and $N>\max\{28,2r\}$. If
$G\subset \Isom(X_{\HH,r,N})$ is compact and nontransitive group, with
positive-dimensional identity component, then
\begin{align*}
\diam(X_{\HH,r,N}/G)\ge\frac1{2\sqrt r},\qquad
 \Delta(X_{\HH,r,N},G)\ge\frac1{\pi r}.
\end{align*}
In particular, the estimate \eqref{eq:tail} holds for quaternionic
Grassmannians in every rank $r\ge1$.
\end{theorem}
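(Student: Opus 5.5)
We follow the pattern of \cref{thm:real-tail,thm:complex-tail} and split according to whether the lifted group acts reducibly or irreducibly on $V=\HH^N$. Lift $G$ through $Sp(N)\to PSp(N)$ as in \cref{sec:reduction}, and let $\widehat G\subset Sp(N)$ be the full preimage. It is compact, has the same orbits on $\Gr_r(\HH^N)$ as $G$, and satisfies $\h=\Lie(\widehat G^0)\ne0$.

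\emph{Reducible case.} Suppose $r\ge2$ and $\widehat G$ preserves a proper nonzero quaternionic subspace $W$. Then \cref{lem:subspace} gives $\diam(X_{\HH,r,N}/G)\ge\pi/2$. This exceeds $1/(2\sqrt r)$. Dividing by $\diam X_{\HH,r,N}=\pi\sqrt r/2$ from \cref{lem:diameters} gives $\Delta\ge 1/\sqrt r\ge 1/(\pi r)$. For $r=1$ no case split is needed.

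\emph{Irreducible case.} Now suppose either $r=1$, or $r\ge2$ and $\widehat G$ acts quaternionically irreducibly. The hypothesis $N>\max\{28,2r\}$ gives $N>28$ and $N\ne 2r$, so \cref{lem:quaternionic-bridge} applies. If $q_r$ were constant, then $\h=\spn(N)$. In that case $\widehat G^0=Sp(N)$ is transitive on $\Gr_r(\HH^N)$, and hence $G$ is transitive, which is excluded. So $q_r$ is nonconstant.

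\emph{Invariance of $q_r$.} The function $q_r$ is invariant under all of $\widehat G$. Each element of $\widehat G$ normalizes $\h$, because $\widehat G^0$ is normal. Its conjugation preserves \eqref{eq:trace}, so it permutes orthonormal bases of $\h$ orthogonally. As noted after \eqref{eq:quaternionic-energies}, $q_r$ is unchanged by such basis changes. Thus $q_r$ descends to the quotient.

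\emph{Degree bound.} Along a unit rank-one rotation \eqref{eq:rank-one-rotation}, write $P(t)=P_0+v(t)v(t)^*$ with $v(t)=\cos t\,e+\sin t\,u$. The real entries of $P(t)$ are trigonometric polynomials of degree at most two. By \eqref{eq:quadratic}, $q_r$ is quadratic in $P$, so its restriction has degree at most four. One point to check is that $v v^*$ is computed with right scalars; since $e,u$ have real coefficients in $t$, the entries remain degree two.

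\emph{Conclusion.} Affinely normalize the range of $q_r$ to $[-1,1]$. \cref{lem:oscillation} with $d=4$ then gives $\diam(X_{\HH,r,N}/G)\ge 2/(4\sqrt r)=1/(2\sqrt r)$. Dividing by $\pi\sqrt r/2$ gives $\Delta\ge 1/(\pi r)$.

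The substantive input is already contained in \cref{thm:quartic-rigidity} and \cref{lem:quaternionic-bridge}. The step needing the most care is the reducible/irreducible split. Irreducibility must be imposed on the \emph{whole} lifted group, since the remark after \cref{lem:subspace} warns that the identity component is not enough. Both \cref{lem:subspace} and the Casimir argument of \cref{lem:quaternionic-bridge} are phrased for the full group, so the dichotomy is exhaustive.
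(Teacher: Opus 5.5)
Your proposal is correct and follows the paper's proof essentially step for step. You lift to $Sp(N)$ and, for $r\ge2$, dispose of a $\widehat G$-invariant proper quaternionic subspace via \cref{lem:subspace}. Otherwise you obtain nonconstancy of $q_r$ from \cref{lem:quaternionic-bridge}, using $N>28$, $N\ne2r$ and nontransitivity to exclude $\h=\spn(N)$. You then check invariance under the whole lifted group and trigonometric degree at most four along rank-one rotations, and conclude with \cref{lem:oscillation} for $d=4$; your explicit check $\Delta\ge1/\sqrt r\ge1/(\pi r)$ in the reducible case is a detail the paper leaves implicit.
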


\begin{proof}
In this parameter range the full isometry group is $PSp(N)$.
Take the inverse image of $G$ under the finite-kernel covering
$Sp(N)\to PSp(N)$, as in \cref{sec:reduction}, and denote it again
by $G$. The lift is compact, has positive-dimensional identity
component, and has exactly the same Grassmannian orbits. Put
$\h=\Lie(G^0)\ne0$. It is proper in $\spn(N)$, since the full
symplectic algebra would make the action transitive.

If $r=1$, \cref{lem:quaternionic-bridge} already says that $q_1$
is nonconstant. Suppose $r\ge2$. If the whole group preserves a
proper nonzero quaternionic subspace, \cref{lem:subspace} gives
$\diam(X_{\HH,r,N}/G)\ge\pi/2$, which is stronger than the
claimed bound. Otherwise the whole group is quaternionically
irreducible, and \cref{lem:quaternionic-bridge} makes $q_r$
nonconstant as well.

Conjugation by every component of $G$ preserves $\h$ and the trace
scalar product, so $q_r$ is invariant under the whole group. A
unit-speed rank-one Grassmannian rotation replaces one unit frame
vector $e$ by $e(t)=\cos(t)e+\sin(t)u$, with $u\perp P$ unit.
Its projector is
$P(t)=P-ee^*+e(t)e(t)^*$, whose entries have trigonometric degree
at most two. Equation \eqref{eq:quadratic} therefore gives degree
at most four for $q_r(P(t))$, independently of $N$ and $\dim\h$.
Let $m$ and $M$ be the minimum and maximum of $q_r$. Since $m<M$,
the invariant function
\begin{align*}
F(P)=\frac{2q_r(P)-M-m}{M-m}
\end{align*}
has range $[-1,1]$ and the same trigonometric degree. Applying
\cref{lem:oscillation} with $d=4$ yields
$\diam(X_{\HH,r,N}/G)\ge 2/(4\sqrt r)$.
Finally, \eqref{eq:diameters} gives
$\diam X_{\HH,r,N}=\pi\sqrt r/2$, so division yields
$\Delta(X_{\HH,r,N},G)\ge1/(\pi r)$.
For $r=1$ this reads $\diam(\HH P^{N-1}/G)\ge1/2$ and
$\Delta(\HH P^{N-1},G)\ge1/\pi$.
\end{proof}

\begin{comment}
The use of the whole group in the higher-rank reduction is necessary
for disconnected actions. The invariant-subspace alternative and
the scalar-Casimir argument both refer to $G$, whereas the quartic
rigidity theorem is applied only after its own irreducibility
consequence has been proved. No irreducibility of $G^0$ is assumed.
\end{comment}

\section{Proof of the rank-dependent  gap}\label{sec:assembly}

\begin{proof}[{\bf Proof of \cref{thm:main}}]
For $r=1$, the conclusion is the known rank-one consequence of
\cref{thm:sphere-gap,thm:fixed-space}; see 
\cite{GLLM}. We may therefore assume
$r\ge2$. We may also replace the acting group by its closure and omit the
transitive case. On the three families of Grassmannians \eqref{eq:families} with
$N>\max\{28,2r\}$, the acting groups with positive-dimensional identity
component are covered by \cref{thm:tail}. A compact subgroup of the
isometry group is a compact Lie group. If its identity component is
zero-dimensional, the group is finite and
\cref{thm:finite} applies.
Now, we choose  $M_1,\ldots,M_t$ for the remaining homothety
classes, including the bounded Grassmannian parameters. There are
finitely many; see \cref{sec:reduction}. Since each representative is compact homogeneous
and simply connected, \cref{thm:fixed-space} shows that there exists $\delta_j>0$
such that every positive-diameter isometric quotient of $M_j$ has
diameter at least $\delta_j$. Therefore
\begin{align*}
\eps_r=\min\left\{\frac1{\pi r},\ c_{\mathrm{fin},r},
 \frac{\delta_1}{\diam M_1},\ldots,
 \frac{\delta_t}{\diam M_t}\right\}>0
\end{align*}
has the required property. Every term is independent of the acting
group, and overall metric scalings leave the quotient ratio unchanged. This completes the proof.
\end{proof}

\begin{comment}
The use of the whole group in the quaternionic reduction is essential
to the argument for disconnected groups. An invariant subspace is
used only when it is preserved by all components. In the remaining
case, normalization of $\h$ by the whole group makes the Casimir
scalar. Irreducibility of the identity component is neither assumed
nor needed in this reduction.
\end{comment}

We finish with three examples that delimit the conclusion. The first
is the simply connected version of the mechanism in
\cite[Example~32]{GLLM}.

\begin{example}\label{ex:rank}
There is no positive lower bound independent of rank, even among
simply connected irreducible compact symmetric spaces.
For $r\ge2$, let $M_r=\operatorname{Spin}(2r+1)$ have the
bi-invariant metric pulled back from the scalar product
$-\tr(AB)/2$ on $\son(2r+1)$. This is a simply connected
irreducible compact symmetric space of rank $r$. Right multiplication
by $G_r=\operatorname{Spin}(2r)$ has quotient the unit sphere
$S^{2r}$, of diameter $\pi$.
On $SO(2r+1)$, the element with $r$ planar rotation blocks of angle
$\pi$ and one fixed coordinate lies at distance $\pi\sqrt r$ from
the identity. Indeed, in this bi-invariant metric the distance is
the least norm of a skew-symmetric logarithm, and these blocks have
minimal rotation angles $\pi$. A covering local isometry does not
increase distances under projection, so $\diam M_r\ge\pi\sqrt r$.
Consequently
\begin{align*}
0<\Delta(M_r,G_r)\le\frac1{\sqrt r}\longrightarrow0.
\end{align*}
Thus dependence on $r$ in \cref{thm:main} is necessary.
\end{example}

\begin{example}\label{ex:products}
If factors may be scaled independently, a fixed-rank gap fails for
reducible symmetric spaces. Let
\begin{align*}
M_t=S^2(1)\times S^2(t),\qquad t>0,
\end{align*}
where the arguments denote the sphere radii. Let $SO(3)$ act
transitively on the first factor and trivially on the second. The
space is simply connected and symmetric of rank two, but
\begin{align*}
\Delta(M_t,SO(3))=
 \frac{\pi t}{\sqrt{\pi^2+\pi^2t^2}}
 =\frac{t}{\sqrt{1+t^2}}\longrightarrow0.
\end{align*}
This explains the irreducibility hypothesis when only the total
diameter is normalized. A statement for reducible spaces must also
control the relative scales of the factors.
\end{example}

\begin{example}\label{ex:rank-one-tail} 
 Let $m\ge15$ and view the unit sphere $S^{2m-1}$ as the unit sphere
in the real matrix space $\R^{m\times2}$ with its Frobenius norm.
The connected group $SO(m)\times SO(2)$ acts by
$X\mapsto AXB^{-1}$. Its orbits are determined by the two singular
values. Indeed, there exist  orthogonal matrices
$U\in {SO}(m)$ and $V\in{SO}(2)$ such that $   X=U\begin{pmatrix}
    \Sigma\\ O
    \end{pmatrix}V,$
where $\Sigma=\operatorname{diag}(\sigma_1,\sigma_2)$ and
$\sigma_1\geq \sigma_2\geq 0$.
Therefore, every orbit contains the matrix $X_\theta$ whose only
nonzero entries are $(X_\theta)_{11}=\cos\theta$ and
$(X_\theta)_{22}=\sin\theta$, with $0\le\theta\le\pi/4$.
The curve $\theta\mapsto X_\theta$ has speed one, so the quotient
diameter is at most $\pi/4$. For the reverse inequality, the orbit of
$X_0$ consists of unit rank-one matrices $ab^T$, with $|a|=|b|=1$.
Their inner products with $X_{\pi/4}$ satisfy
\begin{align*}
\ip{ab^T}{X_{\pi/4}}_{\mathrm{Frob}}
 =a^T X_{\pi/4}b\le\|X_{\pi/4}\|_{\mathrm{op}}=\frac1{\sqrt2},
\end{align*}
where  $\|\cdot\|_{\mathrm{op}}$ denotes the operator norm of a matrix, i.e., 
$\|A\|_{\mathrm{op}}=\max_{|v|=1}|Av|.$
Their spherical distance from $X_{\pi/4}$ is therefore at least
$\pi/4$. The quotient has diameter exactly $\pi/4$, and
\begin{align*}
\Delta(S^{2m-1},SO(m)\times SO(2))=\frac14<\frac1\pi.
\end{align*}
Here $S^{2m-1}=X_{\R,1,2m}$ and $2m>28$. This example explains the
real rank restriction in \cref{thm:tail}; it does not contradict the
positive existence statement of \cref{thm:main}.
\end{example}

\section{Acknowledgements}
This paper is partially supported by NSFC (Grant Nos. 12571025, 12131012 and 12301032), NSF of Jiangsu (Grant
No. BK20230803),  Fundamental Research Funds for the Central Universities (Grant No. 4007012402), and Zhishan Scholars Programs of Southeast University (Grant No. 2242024RCB0039).

\section*{Use of generative AI.} 
During the preparation of this work, the authors used ChatGPT  for exploratory computations, possible proof directions, and editorial polishing.  The authors reviewed and verified all output.

\hskip -0.6cm
\noindent  
Zhiqi Chen \\
School of Mathematics and Statistics, Guangdong University of Technology, Guangzhou 510520, P.R. China\\
E-mail: chenzhiqi@nankai.edu.cn\\
~\\
Shaoqiang Deng\\
School of Mathematical Sciences and LPMC, Nankai University, Tianjin 300071, P. R. China\\
Email address: dengsq@nankai.edu.cn\\
~\\
\noindent Hui Zhang\\
School of Mathematics, Southeast University, Nanjing 211189, P. R. China\\
E-mail: huizhang@mail.nankai.edu.cn
\end{document}